\theoremstyle{plain}
\newtheorem{theorem}{Theorem}[section]
\newtheorem{corollary}[theorem]{Corollary}
\newtheorem{lemma}[theorem]{Lemma}
\newtheorem{remark}[theorem]{Remark}
\theoremstyle{definition}
\newtheorem{definition}[theorem]{Definition}
\newtheorem{definitions}[theorem]{Definitions}
\newtheorem{conjecture}[theorem]{Conjecture}
\newtheorem{example}[theorem]{Example}
\newtheorem*{theorem*}{Theorem}
\DeclarePairedDelimiterX\makeset[2]{\{}{\}}{#1\; \delimsize\vert\; #2}
\newcommand{\Av}{\text{Av}}
\newcommand{\N}{\mathcal{N}}
\newcommand{\E}{\mathcal{E}}
\renewcommand{\P}{\mathcal{P}}
\newcommand{\Q}{\mathcal{Q}}
\renewcommand{\S}{\mathcal{S}}
\renewcommand{\ne}[1]{\overline{#1}}
\DeclareMathSymbol{\shortminus}{\mathbin}{AMSa}{"39}
\begin{document}

\title{Counting pattern-avoiding integer partitions}

\author{
  Jonathan Bloom\\
  \texttt{Lafayette College}\\
  \texttt{bloomjs@lafayette.edu}\\
  \and 
  Nathan McNew\\
    \texttt{Towson University}\\
    \texttt{nmcnew@towson.edu}
}
\date{\vspace{-3ex}}

\maketitle
\begin{abstract}
A partition $\alpha$ is said to contain another partition (or pattern) $\mu$ if the Ferrers board for $\mu$ is attainable from $\alpha$ under removal of rows and columns. We say $\alpha$ avoids $\mu$ if it does not contain $\mu$.  In this paper we count the number of partitions of $n$ avoiding a fixed pattern $\mu$, in terms of generating functions and their asymptotic growth rates.  

We find that the generating function for this count is rational whenever $\mu$ is (rook equivalent to) a partition in which any two part sizes differ by at least two.  In doing so, we find a surprising connection to metacyclic $p$-groups.  We further obtain asymptotics for the number of partitions of $n$ avoiding a pattern $\mu$.  Using these asymptotics we conclude that the generating function for $\mu$ is not algebraic whenever $\mu$ is rook equivalent to a partition with distinct parts whose first two parts are positive and differ by 1.  
\end{abstract}

\maketitle

\section{Introduction}\label{sec:intro}
In \cite{Bloom:Rook-2018} the first author and Saracino introduced the following notion of pattern-avoiding integer partitions.  
Viewing two integer partitions $\alpha$ and $\mu$ as Ferrers boards, we say that $\alpha$ \emph{contains} $\mu$ if there exist some set of rows and columns that can be deleted from $\alpha$ so that, after top/left justifying the remaining boxes, we obtain $\mu$. If this is not possible we say that $\alpha$ \emph{avoids} $\mu$.  We denote by $\Av_n(\mu)$ the set of all $\mu$-avoiding partitions of $n\geq 0$ and set $\Av(\mu) = \cup_{n\geq 0}\Av_n(\mu)$.  

For example, $\alpha = (6,5,5,5,4,4,2,2)$ contains $\mu=(4,3,3,2,2)$ since we can delete the rows and columns indicated in red below and then justify the remaining boxes to obtain $\mu$.  

\ytableausetup{boxsize=.8em}
$$\alpha=\ydiagram{6,5,5,5,4,4,2,2}*[*(red)]{0,5,0,0,4,0,2}*[*(red)]{2+2,2+2,2+2,2+2,2+2,2+2}\hspace{1cm} \mu = \ydiagram{4,3,3,2,2}  \ .$$
Additionally, when $\mu = (2,1)$ then $\Av(\mu)$ consists of all partitions whose Ferrers board is a rectangle.  

The purpose of this paper is to study the sequence 
\begin{equation}\label{seq}
|\Av_1(\mu)|, |\Av_2(\mu)|,|\Av_3(\mu)|,\ldots\ 
\end{equation}
of the counts of partitions avoiding a fixed partition $\mu$. To do so, we investigate the generating function
\begin{equation}\label{gf}
    \sum_{n\geq 0}|\Av_n(\mu)|z^n,
\end{equation}
which we casually refer to as \emph{the generating function for $\mu$}, as well as the asymptotic growth rate of \eqref{seq}.

Throughout the paper we regard a partition $\mu$ of $n\geq 1$ as an infinite weakly decreasing sequence of nonnegative integers $\mu_1\geq \mu_2\geq \cdots$, called \emph{parts}, whose nonzero terms sum to $n$.    We call $n$ the \emph{weight} of $\mu$ and write $|\mu|=n$.  The set of all partitions is denoted by $\P$. We call a partition \emph{strict} provided that all its positive parts are distinct and \emph{super-strict} if all its positive parts differ by at least 2.

While far from obvious, previous work shows we can restrict our attention to avoiding strict partitions $\mu$ without any loss of generality. In particular the first author and Saracino show \cites{Bloom:Rook-2018,Bloom:On-cr2018} the following.

\begin{theorem}\label{thm:strict}
For any partition $\tau$ there exists a unique strict partition $\mu$ such that $|\Av_n(\tau)| = |\Av_n(\mu)|$ for all $n\geq 0$.
\end{theorem}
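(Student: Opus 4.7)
The plan is to route the proof through the classical \emph{rook equivalence} of Ferrers boards. Two partitions are rook equivalent when their Ferrers boards admit the same number of non-attacking $k$-rook placements for every $k$, and by the theorem of Foata--Sch\"utzenberger each rook equivalence class contains a unique strict partition, its \emph{canonical representative}. My strategy is to show that the count $|\Av_n(\tau)|$ depends only on the rook equivalence class of $\tau$; the desired $\mu$ is then the canonical representative of $\tau$.

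Concretely, setting $c_n(\mu):=p(n)-|\Av_n(\mu)|$, the number of weight-$n$ partitions containing $\mu$, the task reduces to proving $c_n(\tau)=c_n(\tau')$ for all $n$ whenever $\tau,\tau'$ are rook equivalent. Rook equivalence is generated by the elementary move replacing a consecutive pair of equal parts $(k,k)$ with $(k+1,k-1)$, so it suffices to handle a single such move, for which the natural route is a bijection between the weight-$n$ partitions containing $\tau$ and those containing $\tau'$. These two sets are not equal in general---small examples already show that $(3,1)$ contains itself but not $(2,1,1)$, while $(2,1,1)$ does the reverse---so a genuinely combinatorial bijection is required. Constructing such a bijection, presumably via a local modification of $\alpha$ keyed to how the equal-pair feature of the pattern sits inside $\alpha$, and then verifying it preserves the containment relation, is the main obstacle.

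For uniqueness, the weight $|\mu|$ is recovered from the sequence as the smallest $n$ with $|\Av_n(\mu)|<p(n)$, so two avoidance-equivalent strict partitions must share a common weight $N$. Pinning down $\mu$ among strict partitions of $N$ can be accomplished either directly, via inductive analysis of the higher terms of the sequence using the fact that at each weight $m\geq N$ the deficit $c_m(\mu)$ constrains the shape of $\mu$, or more cleanly by establishing the converse direction---that avoidance equivalence implies rook equivalence---and appealing to the uniqueness of canonical representatives. This converse should follow by expressing $\sum_n|\Av_n(\mu)|z^n$ in terms of rook-theoretic data of $\mu$, so that the generating function transparently depends only on the rook class.
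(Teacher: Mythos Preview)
Your overall architecture---route through rook equivalence, invoke the Foata--Sch\"utzenberger canonical strict representative, and reduce existence to invariance of $|\Av_n(\cdot)|$ under the elementary move $(k,k)\mapsto(k{+}1,k{-}1)$---is exactly the structure behind this result. Note, though, that the present paper does not itself prove Theorem~\ref{thm:strict}: it is quoted from prior work of Bloom and Saracino, with \cite{Bloom:Rook-2018} supplying the forward implication (rook equivalence $\Rightarrow$ Wilf equivalence) and \cite{Bloom:On-cr2018} the converse, and Section~\ref{sec:background} giving essentially the same summary you sketch.

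Where your proposal falls short is that it remains a plan rather than a proof. You correctly flag the central obstruction---the weight-preserving bijection between partitions containing $\tau$ and those containing $\tau'$ after a single elementary move---but do not construct it, and the hope that a ``local modification of $\alpha$ keyed to how the equal-pair feature of the pattern sits inside $\alpha$'' will suffice understates the difficulty; the actual argument in \cite{Bloom:Rook-2018} is considerably more delicate than a local tweak. On uniqueness, your sketch via the converse is also incomplete: asserting that the generating function ``transparently depends only on the rook class'' is just the forward direction restated, and does not by itself give that distinct rook classes yield distinct avoidance sequences. What you would need is to \emph{recover} the rook data from the generating function, which is a genuinely separate statement; the proof in \cite{Bloom:On-cr2018} establishes the converse by independent means rather than as a corollary of the forward direction.
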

We say two partitions $\mu$ and $\tau$ are \emph{Wilf equivalent} if $|\Av_n(\mu)| = |\Av_n(\tau)|$ for all $n\geq 1$. (Our choice of the term ``Wilf equivalence'' is in reference to a similar definition found in the theory on pattern-avoiding permutations.)  More background on this is described in Section \ref{sec:background}.

In light of Theorem~\ref{thm:strict}, we consider only the problem of avoiding strict partitions $\mu$ for the remainder of the paper. We start, in Section~\ref{sec:rational}, by considering the generating function \eqref{gf} when $\mu$ is strict.  We show, in Theorem \ref{thm:rational} that this generating function is rational whenever $\mu$ is super-strict.

As a corollary (Corollary \ref{cor:K}) we conclude that for any fixed $K\geq 1$ the generating function counting all partitions  $\alpha$ with the property that any two (positive) parts of $\alpha$ differ by at most $K$ is rational.  The proof of Theorem \ref{thm:rational} is constructive in that it gives a method to compute the rational function associated to the generating function, which we describe further at the end of the section.  

Implementing this method we find the generating functions for various small partitions $\mu$, the results of which, along with corresponding asymptotics, are tabulated in Table \ref{tab:my_label}.  In creating this table, we discovered, by way of the OEIS database, that the generating function for $\mu=(5,2)$ is identical to the generating function for the number of so-called metacyclic $p$-groups for prime $p$.  We record this curious coincidence and give more details in Remark~\ref{rmk:metacyclic}.  

In Section~\ref{sec:growth} we find the asymptotic growth rate of the sequence (\ref{seq}) for every strict partition $\mu$. We find that this growth rate has a different form for staircase partitions of the form $\mu=(k+1,k,\ldots, 1)$ than it does for other sorts of strict partitions. Using results of Ingham \cite{Ingham} and Estermann 
\cites{Estermann1,Estermann2} we prove in Theorem~\ref{thm:staircase} that

\begin{equation*}
|\Av_n(k+1,k,\ldots,1)| =\begin{cases}
\sigma_0(n) & k=1  \\
\frac{1}{2\zeta(2)}\sigma_{1}(n)\log^2 n\left(1+O\left(\frac{\log \log n}{\log n}\right)\right) & k=2 \\
\frac{1}{k!(k-1)!\zeta(k)}\sigma_{k-1}(n)\log^k n\left(1+O\left(\frac{1}{\log n}\right)\right) & k\geq 3,
\end{cases}
\end{equation*}
where $\sigma_i(n) = \sum_{d\vert n}d^i$.  If $\mu$ is not a staircase, then we can write 
$$\mu = (k+1,k, k-1,\ldots, k-\ell+1, a_0,a_1,\ldots). $$
where $k-\ell>a_0$, so that $k - \ell$ is the first size omitted from $\mu$.  In Theorem~\ref{thm:not staircase} we show that for such a partition $\mu$ we have
\begin{equation} |\Av_n(\mu)| 
= \frac{n^{k-1}\log^\ell n}{\ell!(k-1)!
\prod_{j=0}^{k-\ell-1}\left((k-\ell) -(a_j+j)\right)}
\left(1+O\left(\frac{1}{\log n}\right)\right). \label{eq:introgencase} \end{equation}

To facilitate our study of partitions that are not staircases we consider first, in Section \ref{sssec:nonstaircase 1}, the special case of partitions whose first two parts differ by at least 2.  In this case we show in Theorem~\ref{thm:nonotch} that \eqref{eq:introgencase} holds with a much stronger error term.

From Theorems~\ref{thm:staircase} and \ref{thm:not staircase} we see that the leading term of our asymptotic expression contains a log factor whenever the largest two parts of $\mu$ are positive and differ by 1.  This suggests that such generating functions cannot be rational.  In fact, we prove in  Corollary~\ref{cor:not rational} the stronger result that such generating functions cannot be algebraic.  Based on this, as well as computations of the lower order terms in the asymptotics for various small strict partitions we conjecture that in fact more is true.

\begin{conjecture}
If $\mu$ is any strict partition that is not super-strict, i.e., $\mu$ contains two nonzero parts which differ by 1, then the generating function for $\mu$ is not algebraic.
\end{conjecture}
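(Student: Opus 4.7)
The plan is to extend the non-algebraicity argument of Corollary~\ref{cor:not rational} by showing that a logarithmic factor still appears somewhere in the asymptotic expansion of $|\Av_n(\mu)|$, even when it is absent from the leading term. Recall that an algebraic power series with nonnegative coefficients admits, by the Puiseux expansion of an algebraic function around its dominant singularities combined with the standard transfer theorems of singularity analysis, an asymptotic expansion of the form $\rho^{-n}\sum_{k\geq 0}c_k n^{\alpha_k}$ with $\alpha_k\in\mathbb{Q}$ and, crucially, \emph{no} $\log n$ factors at any order. Hence to prove the conjecture it suffices to exhibit a $\log n$ factor somewhere in the asymptotic expansion of $|\Av_n(\mu)|$ whenever $\mu$ is strict with two consecutive parts differing by $1$. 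The mentioned computations of lower-order terms for small $\mu$ strongly suggest this is the case.

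If the largest two parts of $\mu$ already differ by $1$ then Corollary~\ref{cor:not rational} applies, so one may assume $\mu_1-\mu_2\geq 2$ and let $i\geq 2$ be minimal with $\mu_i-\mu_{i+1}=1$. The main step is to decompose partitions $\alpha\in\Av_n(\mu)$ according to which rows and columns of $\alpha$ are forced to witness the first $i-1$ rows of $\mu$. Because the top $i-1$ parts of $\mu$ are each strictly wider than the next, these rows are ``rigid'' and can be peeled off, producing a convolution identity whose remaining factor is (after shifts) the avoidance count for the truncated pattern $\mu^{(i)}=(\mu_i,\mu_{i+1},\ldots)$, whose top two parts differ by $1$. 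Applying Theorem~\ref{thm:not staircase} to $\mu^{(i)}$ yields an honest $\log n$ factor, and tracking this through the convolution should inject a sub-leading term of the form $c\,n^{\beta}(\log n)^{\gamma}$ with $\gamma\geq 1$ into the expansion of $|\Av_n(\mu)|$, completing the proof.

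The principal obstacle is refining the asymptotics finely enough to detect this sub-leading logarithmic term. Theorem~\ref{thm:not staircase} records only the leading term with a multiplicative error of $O(1/\log n)$, which is far too coarse for the present purpose: one needs absolute control on terms of size $n^{\beta}(\log n)^{\gamma}$ for $\beta<k-1$. I would therefore revisit the Tauberian inputs of Ingham and Estermann and try to extract a full asymptotic series for the relevant Dirichlet-type convolutions, likely by replacing the Tauberian argument with a direct Mellin transform or singularity analysis of the associated generating functions. A secondary obstacle is to rule out cancellation between logarithmic contributions arising from different layers of the peeling decomposition (or from multiple notches in $\mu$); positivity of the counting sequence should ultimately force non-cancellation, but making this rigorous will require careful sign bookkeeping. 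Once any sub-leading $\log n$ term is isolated, non-algebraicity follows at once from the Flajolet--Sedgewick asymptotic shape recalled above.
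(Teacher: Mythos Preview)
The statement you are attempting to prove is explicitly left as a \emph{conjecture} in the paper; there is no proof to compare against. The authors establish non-algebraicity only when $\mu_1-\mu_2=1$ (Corollary~\ref{cor:not rational}) and state the general case as a conjecture, motivated by numerical computations of lower-order asymptotic terms for small $\mu$.

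Your proposal is a research strategy rather than a proof, and you are candid about this. The overarching idea---detect a subleading logarithmic term in the asymptotic expansion of $|\Av_n(\mu)|$ and invoke the fact that coefficient asymptotics of algebraic generating functions are log-free at every order---is sound in principle. However the two obstacles you name are genuine and unresolved. First, for $\mu$ with $\mu_1-\mu_2\geq 2$ the paper's Theorem~\ref{thm:nonotch} delivers only a main term $c\,n^{k-1}$ with error $O(\sigma_{k-2}(n)\log^{k-1}n)$; isolating a definite logarithmic contribution beneath this main term would require a genuine secondary term, which the summation-via-$\Psi_m$ machinery is not presently sharp enough to produce. Second, and more seriously, your ``peeling'' step is not actually established: pattern containment does not factor along the top rows of $\mu$ in a way that yields a clean convolution with $|\Av_m(\mu^{(i)})|$. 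The structural tool the paper relies on is Lemma~\ref{lem:rd of mu}, which characterizes $D(\mu)$ through width constraints on individual rectangles in the rectangular decomposition; this does not obviously collapse to a convolution against a truncated pattern, and the claimed ``rigidity'' of the top $i-1$ rows has no counterpart in the paper's analysis. Without making that decomposition precise, and without a mechanism to preclude cancellation among logarithmic contributions, the argument remains a plausible heuristic rather than a proof---which is, after all, why the authors recorded the statement as a conjecture.
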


\section{Background} \label{sec:background}
It is worth recalling several results from the literature which put our results in context.  We begin with one from the theory of rooks.  

The theory of rooks began in the late 1940's with the paper \cite{kaplansky1946problem} by Kaplansky and Riordon.  In this paper the authors introduced the idea of a rook polynomial as a tool for understanding permutations which avoid certain positions, e.g., derangements.  Simply, a rook polynomial for a partition $\mu$ is the polynomial whose coefficient on $z^k$ is the number of ways to place $k\geq 0$ ``rooks'' on the Ferrers board for $\mu$ so that no two rooks are in the same row and column.  Some twenty years later Foata and Sch\"{u}tzenberger~\cite{foata1970rook} made the following definition.  Two partitions are said to be rook equivalent provided that their rook polynomials are equal. In this same paper Foata and Sch\"{u}tzenberger proved that each rook-equivalence class contains a unique strict partition.  

At this point the theory of rooks merges with our study of pattern-avoiding integer partitions.  The first author and Saracino in \cite{Bloom:Rook-2018} prove that if two partitions are rook equivalent then they are also Wilf equivalent.  The same two authors, shortly thereafter, prove \cite{Bloom:On-cr2018} the reverse implication thereby establishing that rook equivalence coincides with Wilf equivalence.  Combining this with the result of Foata and Schu\"{u}tzenberger mentioned above implies that each Wilf-equivalence class contains a unique strict partition. 

Another connection of our work to the theory of rooks can be found lurking in the constant for the leading term when $\mu$ is not a staircase.  In particular, the factors in the product  $$\prod_{j=0}^{k-\ell-1}\left((k-\ell) -(a_j+j)\right)$$ 
from \eqref{eq:introgencase} are reminiscent of another theorem due to Foata and Sch\"utzenberger.  In \cite{foata1970rook} these authors provide a beautiful characterization of rook equivalence by proving that partitions $\mu$ and $\nu$ are rook equivalent if and only if we have the following equality of multisets 
$$\{1+\mu_1, 2+\mu_2,\ldots\}=\{1+\nu_1, 2+\nu_2,\ldots\}.$$  
Rewriting the above product as 
$$\prod_{j=0}^{k-\ell-1}\left(k-\ell+1 -(a_j+j+1)\right)= \prod_{i=1}^{k-\ell}\left(k-\ell+1 -(\nu_i+i)\right),$$
where $\nu = (a_0,a_1,\ldots)$, we see that our product involves numbers of the form used by Foata and Sch\"utzenberger in their classification theorem.  At this time we are unaware of the precise significance of this observation.  That said, it comes as little surprise that these numbers appear in our results since they appear in much of the rook theory literature.  For example, these numbers are heavily used in the papers   \cite{Bloom:Rook-2018} and \cite{Bloom:On-cr2018} to prove that rook equivalence is the same as Wilf equivalence.  Such numbers also appear in the beautiful result from \cite{goldman1975rook} of Goldman, Joichi, and White where they show that rook polynomials in the falling factorial basis factor entirely with roots that are essentially these numbers.

Partitions avoiding certain specific patterns have been previously studied.  MacMahon \cite{MacMahon} considered partitions with $k$ distinct magnitudes, which are exactly the set  $\Av\big((k+1,k,k-1,\ldots,1)\big) \setminus \Av\big((k,k-1,\ldots,1)\big)$ of partitions avoiding a staircase of size $k+1$ but containing a staircase of size $k$.  In particular, he found generating functions for the number of partitions in this set.  

These partitions were further considered by Andrews \cite{AndrewsSLB} who shows that the number of such partitions with weight $n$ is asymptotic to ($\frac{1}{k!}$ times) the $k$-fold convolution sum of divisor functions.  This convolution sum also has a rich history.  The convolution $\sum_{i=1}^n \sigma_0(i)\sigma_0(n-i)$ was considered by Ingham, who found an asymptotic expression for this sum by elementary means.  Subsequently, Estermann, in a series of papers, found lower order terms for both this and the $k$-fold convolution of divisor functions using the circle method.  

Finally, the set of partitions into at most $k$ parts, which is (up to conjugation) the set of $(k+1)$-avoiding partitions has been extremely well studied in the literature going back to at least Euler. We make no attempt to recount that history here.  Instead we point out that in terms of our definition of pattern avoidance the previous literature involves only the two extreme cases, i.e., partitions avoiding the largest and smallest partitions with largest part $k+1$.  The results in this paper can therefore be viewed as an interpolation between these two extremes.

\section{Generating functions of super-strict partitions}\label{sec:rational}

We start with a few straightforward definitions. 
\begin{definition}
For partitions $\tau$ and $\mu$, we define $\Q_n(\tau,\mu)$ to be the set of all partitions with weight $n$ that contain $\tau$ and avoid $\mu$. We set $\Q(\tau,\mu) = \bigcup_{n\geq 1} \Q_n(\tau,\mu)$.
\end{definition}

\begin{definition}
For a partition $\alpha$ let $m(\alpha)$ to be the multiplicity of $\alpha_1$ in $\alpha$, i.e., the length of the rightmost column of $\alpha$ when viewed as a Ferrers board. For any nonempty set  $\S$ of partitions define 
$$F_{\S}(z,t) = \sum_{\alpha\in \S} z^{|\alpha|}t^{m(\alpha)}.$$
If $\S=\emptyset$ set $F_\S = 0$.  In the case when $S = \Av(\mu)$ or $S = \Q(\tau,\mu)$ we abuse notation and instead write $F_\mu(z,t)$ or $F_{\tau,\mu}(z,t)$, respectively.  	
\end{definition}

We shall also need the following familiar notion.  The \emph{southeast border} of a partition $\mu$ is the lattice path consisting of ``north'' and ``east'' steps which traces along the bottom/right of the columns/rows in $\mu$.  For example, the southeast border for 
$$\mu = \ydiagram{7,4,3}$$ 
is $(e,e,e,n,e,n,e,e,e,n)$ where $e$ and $n$ represent east and north steps, respectively. Certainly every southeast border starts with an east step and ends with a north step. We call a north step followed immediately by an east step a \emph{north-east step}.  Strict partitions are precisely those whose southeast border does not contain consecutive north steps.  Consequently, the southeast border of a strict partition can be written as a sequence of east and north-east steps with a final north step.   Doing this for the above example we get the sequence $(e,e,e,n',n',e,e,n)$ where $n'$ denotes a north-east step.

Armed with these basic definitions the goal of this section is to show that $F_\mu(z,1)$ is rational when $\mu$ is super-strict and establish an algorithm for computing  $F_\mu(z,1)$ in this case. To this end we first define two operators $\E$ and $\N$ which shall correspond to the east and north-east steps, respectively, in the southeast border of $\mu$.  We then show that by mapping the southeast border of $\mu$ to a composition of such operators, we obtain a function $\Theta_\mu$ with the property that 
$$F_\mu(z,1) = \left.\Theta_\mu\left(\frac{zt}{1-zt}\right)\right|_{t=1}.$$

With this in mind let us start by defining the operators $\E$ and $\N$.  We hold off on the motivation for these definitions as it is not required at the moment.  Instead the value of these definitions shall become apparent below with the statement and proof of  Lemma~\ref{lem:GF and E/N}. 

\begin{definition}
Let $\displaystyle G(z,t) = \sum_{n\geq 1}\sum_{m\geq 0} a_{n,m} z^nt^m$.  Then define 
$$\E G(z,t) = \frac{G(z,1)-ztG(z,zt)}{1-zt}$$
and 
$$\N G(z,t) = G(z,0)+\sum_{n\geq 1}\sum_{m\geq 1} a_{n,m} \left(\frac{1}{1-z^m}\right)\left(\frac{1-(tz)^m}{1-tz}\right)z^n.$$
\end{definition} 

Note that if $G(z)$ is a formal power series not dependent on $t$, then $\E G(z) = G(z)$ and $\N G(z) = G(z)$.

\begin{definition}
For partitions $\alpha$ and $\beta$ we set
$$\alpha+\beta:= (\alpha_1+\beta_1, \alpha_2+\beta_2,\ldots).$$ 
\end{definition}
\ytableausetup{boxsize=.8em}	\noindent In terms of Ferrers boards, this sum is the partition whose columns are those of $\alpha$ together with those of $\beta$.

\begin{definitions}
For any super-strict partition $\mu$ define the sequence $\Theta_\mu$ in the symbols $\E$ and $\N$ as follows.  Reading the southeast border of $\mu$ from bottom/left to top/right and ignoring the initial east step, the final two east and north steps, record each east step by an $\E$ and each north-east step by an $\N$.  

Additionally, for any nonempty sequence $\Theta=(\Theta_1,\Theta_2,\ldots)$ in the symbols $\E$ and $\N$ we define
$$\Theta G(z,t): = \cdots \Theta_2\circ \Theta_1 G(z,t).$$
If $\Theta = \varnothing$ is the empty sequence, then we define $\Theta G(z,t) = G(z,t)$.
\end{definitions}

\begin{example}
Taking
$$\mu = \ydiagram{8,5,3}$$ 
then $\Theta_\mu = (\E, \E, \N,\E,\N,\E)$.
\end{example}

We are now able to write the statement of the first theorem in this section.  

\begin{theorem}\label{thm:F}
Let $\mu$ be a super-strict partition with weight at least 2.  Let $\tau$ be such that $\mu=\tau+(1)$.  Then,
$$\Theta_\mu\left(\frac{zt}{1-zt}\right) =F_{\tau}(z,1) + F_{\tau,\mu}(z,t).$$
In particular $F_\mu(z,1) = \left.\Theta_\mu\left(\frac{zt}{1-zt}\right)\right|_{t=1}$.  
\end{theorem}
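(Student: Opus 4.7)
The plan is to prove this by induction on the length of the operator sequence $\Theta_\mu$.

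For the base case, $\Theta_\mu = \varnothing$, so $\mu = (2)$ and $\tau = (1)$. Then $F_\tau(z,1) = 0$ (the only partition avoiding $(1)$ is the empty one), and $\Q((1),(2))$ is the set of single-column partitions $(1^k)$, $k\geq 1$, each with $|\alpha|=m(\alpha)=k$, so $F_{\tau,\mu}(z,t) = \frac{zt}{1-zt}$. Both sides equal $\frac{zt}{1-zt}$.

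For the inductive step, let $\mu^-$ be a super-strict partition and suppose $\mu$ is obtained by appending one symbol $X \in \{\E,\N\}$ to $\Theta_{\mu^-}$. Tracing the southeast border identifies the geometric content: appending $\E$ corresponds to $\mu = \mu^- + (1)$, while appending $\N$ (which occurs precisely when $\mu_1 - \mu_2 = 2$) corresponds to $\mu = (\mu^-_1+1,\mu^-_1-1,\mu^-_2,\ldots,\mu^-_{k^-})$ -- that is, growing the first row of $\mu^-$ by one and inserting a new second row of length $\mu^-_1 - 1$. A direct check shows that in both cases $\mu^-$ is super-strict, so the inductive hypothesis applies. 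A one-line check from the definitions shows that both $\E$ and $\N$ fix every $t$-independent power series, so applying $X$ to the inductive hypothesis gives
\[\Theta_\mu\!\left(\tfrac{zt}{1-zt}\right) \;=\; F_{\tau^-}(z,1) \;+\; X F_{\tau^-,\mu^-}(z,t), \qquad \tau^- := \mu^- - (1).\]

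The main combinatorial work is to establish a bijection in each case. For $X = \E$ (where $\tau = \mu^-$), unfolding the definition gives $\E G(z,t) = G(z,1) + \sum_\alpha \sum_{c'=1}^{m(\alpha)} z^{|\alpha|+c'} t^{c'}$, and the core claim is that $(\alpha,c') \mapsto \alpha + (\text{new rightmost column of height } c')$ is a bijection from $\{(\alpha,c') : \alpha \in \Q(\tau^-,\mu^-),\ 1 \leq c' \leq m(\alpha)\}$ onto $\Q(\tau,\mu) = \Q(\mu^-,\mu)$; combining with the set decomposition $\Av(\mu^-) = \Av(\tau^-) \sqcup \Q(\tau^-,\mu^-)$ closes this case. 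For $X = \N$ (where $\tau = (\tau^-_1+1, \tau^-_1, \tau^-_2, \ldots)$), one has $F_\tau(z,1) - F_{\tau^-}(z,1) = F_{\tau^-,\tau}(z,1)$, and expanding the kernel $\tfrac{1-(tz)^m}{(1-z^m)(1-tz)} = \sum_{j\geq 0}\sum_{k=0}^{m-1} z^{jm+k} t^k$ yields
\[\N F_{\tau^-,\mu^-}(z,t) \;=\; \sum_{\alpha \in \Q(\tau^-,\mu^-)} \sum_{j\geq 0} \sum_{k=0}^{m(\alpha)-1} z^{|\alpha|+j\,m(\alpha)+k}\, t^k.\]
The plan is to split this triple sum by $k$: the $k=0$ terms should biject with $\Q(\tau^-,\tau)$ (each pair $(\alpha,j)$ sending to $\alpha$ with $j$ extra copies of the $m(\alpha)$-tall column appended) to produce $F_{\tau^-,\tau}(z,1)$, while the $k\geq 1$ terms should biject with $\Q(\tau,\mu)$ (each triple $(\alpha,j,k)$ sending to $\alpha$ augmented by $j$ such columns and then a final column of height $k$, so that $t^k$ records the new multiplicity $m$) to produce $F_{\tau,\mu}(z,t)$. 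The main obstacle I expect is establishing these two bijections in the $\N$ case -- one must check that the column-stacking construction exactly preserves the relevant containment and avoidance conditions, and the super-strictness of $\mu$ (in particular the gap equality $\mu_1-\mu_2=2$ in this case) is the structural hypothesis that makes this work. The ``in particular'' assertion then follows by setting $t=1$ and invoking the decomposition $\Av(\mu) = \Av(\tau) \sqcup \Q(\tau,\mu)$.
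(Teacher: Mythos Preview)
Your proposal is correct and follows essentially the same route as the paper. The paper also inducts (on $|\mu|$, equivalently on the length of $\Theta_\mu$), with the same base case and the same two-case split according to whether the appended symbol is $\E$ (paper's case $\mu_1\ge\mu_2+3$) or $\N$ (paper's case $\mu_1=\mu_2+2$); your identification of $\mu^-$, $\tau^-$, $\tau$ in each case matches the paper's $\rho+(1)$, $\rho$, $\ne\rho$ exactly.

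The only organizational difference is that the paper packages the bijections you describe as separate lemmas. Your $\E$-case bijection $(\alpha,c')\mapsto\alpha+(1^{c'})$ is the paper's Lemma~\ref{lem:E}; your $\N$-case splitting into the $k=0$ part (append $j$ copies of the rightmost column) and the $k\ge 1$ part (append $j$ copies and then a strictly shorter column) is the paper's Lemma~\ref{lem:N}, phrased via the maps $M$ and $N$ of Definition~\ref{def:EMN}. The generating-function unfolding you perform is the content of Lemma~\ref{lem:GF and E/N}. So the ``main obstacle'' you flag is exactly what the paper isolates and proves in Subsection~\ref{sec:lemmas 1 and 2}, using a short list of containment facts (Lemma~\ref{lem:list of prop}) and, for the $\N$ case, an auxiliary unique-decomposition statement (Lemma~\ref{lem:Q(mu,nemu)}) showing that every $\beta\in\Q(\rho,\ne\rho)$ decomposes uniquely as $\beta=\alpha+(w^{m(\alpha)})$ with $\alpha\in\Q(\rho,\rho+(1))$. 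That last lemma is the one nontrivial ingredient you would still need to supply to complete your sketch.
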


Let us pause to illustrate this theorem.  In the case that $\mu=(2)$ we have $\Theta_{(2)} = \emptyset$, since the initial east step as well as the final two east and north steps are ignored, so 
$$\left.\left(\frac{zt}{1-zt}\right)\right|_{t=1} = \frac{z}{1-z}$$
which agrees with $F_{(2)}(z,1)$ in this case since $\Av\big((2)\big)$ consists of only single columns.   Next consider the case when $\mu = (3)$.  Here $\Theta_{(3)} = (\E)$ and  after some simplification we have
$$\E \left.\left(\frac{zt}{1-zt}\right)\right|_{t=1} = \frac{z+z^2 - z^3}{(1-z)(1-z^2)}= \frac{1}{(1-z^2)(1-z)}-1.$$
On the other hand, $\Av\big((3)\big)$ consists of all partitions with at most two columns which is easily seen to be counted by the expression on the right side.  

Returning to our main argument we aim to prove this theorem by induction on $|\mu|$.  As such  we shall require an understanding of how $\E$ and $\N$ each affect certain generating functions.  To this end we make the following definitions.

\begin{definitions}\label{def:EMN}
For any partition $\alpha$ we define
\begin{align*}
    E(\alpha) &= \makeset{\alpha +(1^c)}{0< c\leq m(\alpha)},\\
    M(\alpha) &= \makeset{\alpha + (w^{m(\alpha)} )}{0\leq w},\\
    N(\alpha) &= \makeset{\alpha + (w^{m(\alpha)} )+(1^c) }{0\leq w\text{ and } 0<c< m(\alpha)}.
\end{align*}
\end{definitions}
The reader should take note that in the definition of $E$ the upper bound on $c$ is $\leq$ while the upper bound on $c$ in the definition of $N$ is $<$.  Therefore the action of $N$ must create a nonempty rightmost column that is strictly shorter than the rightmost column in $\alpha$.  To illustrate consider the following examples.  

\begin{example}
Let $\alpha = (3,3,3,2)$.  Then $E(\alpha)$ consists of the partitions:
\ytableausetup{boxsize=.9em}
$$\ydiagram{3,3,3,2}*[*(gray)]{3+1}\qquad\ydiagram{3,3,3,2}*[*(gray)]{3+1,3+1}\qquad\ydiagram{3,3,3,2}*[*(gray)]{3+1,3+1,3+1}\ ,$$
$M(\alpha)$ is the set consisting of all partitions form:
\ytableausetup
 {boxsize=.9em}
\begin{center}
\begin{tikzpicture}
\node  at (0,0) {
\begin{ytableau}
{} & {} & {} & *(gray){}&\none[\scriptstyle\dots]& *(gray){}\\
{} & {} & {} & *(gray){}&\none[\scriptstyle\dots]& *(gray){}\\
{} & {} & {} & *(gray){}&\none[\scriptstyle\dots]& *(gray){}\\
{} & {} \\
\end{ytableau}
};
\draw[decoration={brace,raise=4pt},decorate] (0,.6) -- (1,.6) node [above= 0.2, pos = 0.5] {$w$};
\end{tikzpicture} 
\end{center}
and $N(\alpha)$ consists of all partitions of the form:
\begin{center}
\begin{tikzpicture}
\node  at (0,0) {
\begin{ytableau}
{} & {} & {} & *(gray){}&\none[\scriptstyle\dots]& *(gray){} & *(gray){}\\
{} & {} & {} & *(gray){}&\none[\scriptstyle\dots]& *(gray){}\\
{} & {} & {} & *(gray){}&\none[\scriptstyle\dots]& *(gray){}\\
{} & {} \\
\end{ytableau}};
\draw[decoration={brace,raise=4pt},decorate] (-.18,.6) -- (0.8,0.6) node [above= 0.2, pos = 0.5] {$w$};
\end{tikzpicture} 
\quad
\begin{tikzpicture}
\node  at (0,0) {
\begin{ytableau}
{} & {} & {} & *(gray){}&\none[\scriptstyle\dots]& *(gray){} & *(gray){}\\
{} & {} & {} & *(gray){}&\none[\scriptstyle\dots]& *(gray){} & *(gray){}\\
{} & {} & {} & *(gray){}&\none[\scriptstyle\dots]& *(gray){}\\
{} & {} \\
\end{ytableau}};
\draw[decoration={brace,raise=4pt},decorate] (-.18,.6) -- (0.8,0.6) node [above= 0.2, pos = 0.5] {$w$};
\end{tikzpicture} 
\end{center}
\end{example}

The relevance of these three functions is established by the following two lemmas.  For readability, we relegate their proofs to Subsection~\ref{sec:lemmas 1 and 2}.

\begin{lemma}\label{lem:E}
	Let $\mu$ be a partition, then 
\begin{equation}\label{eq:E}
E\Q(\mu, \mu+(1)) = \Q(\mu+(1),\mu+(2)).	
\end{equation}
	Further, for any $\beta$ in $\Q(\mu+(1), \mu+(2))$, there exists a unique $\alpha \in \Q(\mu, \mu+(1))$ such that $\beta \in E(\alpha)$.  
\end{lemma}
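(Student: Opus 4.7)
The plan is to construct an explicit inverse to the column-addition operation $E$ by ``stripping the rightmost column'': for any $\beta \in \Q(\mu+(1), \mu+(2))$, set $c = m(\beta)$ and $\alpha = \beta - (1^c)$. Because the top $c$ rows of $\beta$ all have length $\beta_1$ while the rows below are shorter, $m(\beta)=c$; and the top $c$ rows of $\alpha$ all have length $\alpha_1 = \beta_1 - 1$, so $m(\alpha) \geq c$ and $\beta = \alpha + (1^c) \in E(\alpha)$. Uniqueness drops out at once: if $\beta = \alpha' + (1^{c'})$ with $0 < c' \leq m(\alpha')$, then the rightmost column of $\beta$ must have height $c'$, forcing $c' = m(\beta)$ and $\alpha' = \beta - (1^{m(\beta)})$.

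The technical engine is the following canonical form for containment, to be established first: if $\gamma$ contains a partition $\nu$ with $\nu_1 > \nu_2$, then there is a witnessing pair $(r_1,\ldots,r_s;c_1,\ldots,c_{\nu_1})$ with $r_1 = 1$ and $r_i > m(\gamma)$ for $i \geq 2$. One can always push $r_1$ down to $1$ since row~$1$ of $\gamma$ is of maximum length $\gamma_1$; and the subrow condition $|\{j : c_j \leq \gamma_{r_i}\}| = \nu_i < \nu_1 = |\{c_j\}|$ forces some kept column (necessarily $\leq \gamma_1$) to exceed $\gamma_{r_i}$, whence $\gamma_{r_i} < \gamma_1$ and $r_i$ lies below the top $m(\gamma)$ rows.

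Armed with this canonical form, the two inclusions parallel each other. For $\alpha \in \Q(\mu,\mu+(1))$ and $\beta = \alpha + (1^c) \in E(\alpha)$: a canonical containment of $\mu$ in $\alpha$ has $r_i > m(\alpha) \geq c$ for $i \geq 2$, so the new column $\beta_1$ (of height $c$) meets only row $1$ of the containment; adjoining $\beta_1$ to the kept columns produces $\mu+(1)$ in $\beta$. For avoidance of $\mu+(2)$, apply the canonical form to a hypothetical containment of $\mu+(2)$ in $\beta$: its rows $r_2,\ldots,r_s$ exceed $m(\beta)=c$, so deleting the new column — necessarily the rightmost kept column, else the containment sits entirely in $\alpha$ — yields $\mu+(1)$ in $\alpha$, contradicting $\alpha \in \Q(\mu,\mu+(1))$. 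The reverse inclusion runs symmetrically: the canonical containment of $\mu+(1)$ in $\beta$ descends, on removal of $\beta_1$, to $\mu$ in $\alpha$; and a hypothetical containment of $\mu+(1)$ in $\alpha$ extends via its canonical form and the new column to $\mu+(2)$ in $\beta$, contradicting $\beta$'s avoidance. The main obstacle is the canonical form itself: its use of $\nu_1 > \nu_2$ holds automatically when $\nu \in \{\mu+(1),\mu+(2)\}$, and for $\nu = \mu$ it is guaranteed by the super-strict hypothesis driving this section — exactly the condition that keeps the newly added column from ``bleeding'' into the lower rows of the containment.
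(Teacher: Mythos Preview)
Your approach is essentially the paper's: your ``canonical form'' is exactly the content of Lemma~\ref{lem:list of prop}\emph{(iii)} (stated there for strict $\mu$), and the paper simply applies that biconditional twice to get $\alpha\in\Q(\mu,\mu+(1))\Leftrightarrow\alpha+(1^c)\in\Q(\mu+(1),\mu+(2))$ in one stroke, rather than running the four implications separately as you do.  You are also right that an extra hypothesis is needed---the lemma as literally stated fails for $\mu=(2,2)$, since $(2,2)\in\Q(\mu,\mu+(1))$ and $(3,3)\in E((2,2))$ but $(3,3)$ does not contain $(3,2)$---and both your argument and the paper's tacitly use $\mu_1>\mu_2$, supplied in context by (super-)strictness; note that this weaker condition suffices, so your appeal to full super-strictness is more than is required.
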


The following definition will be useful in stating the next lemma as well as throughout the remainder of this section.

\begin{definition}
	For any partition $\mu$ we define $\ne{\mu} = (\mu_1+1,\mu_1,\mu_2,\ldots)$.  
\end{definition}

\begin{lemma}\label{lem:N}
	Let $\mu$ be a strict partition.  Then 
\begin{equation}\label{eq:M}
M\Q(\mu,\mu+(1)) =  \Q(\mu,\ne\mu)
\end{equation}
and 
\begin{equation}\label{eq:N}
N\Q(\mu,\mu+(1)) =  \Q(\ne\mu,\ne\mu+(1)).
\end{equation}
Further, for any $\beta\in\Q(\mu,\ne\mu)$, respectively $\beta \in \Q(\ne\mu,\ne\mu+(1))$, there exists a unique $\alpha \in \Q(\mu,\mu+(1))$ such that $\beta \in M(\alpha)$, respectively $\beta \in N(\alpha)$.    
\end{lemma}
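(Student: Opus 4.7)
The plan is to prove each set equality along with the uniqueness of the preimage by two complementary techniques: for the forward inclusion, direct manipulation of pattern embeddings, and for the reverse inclusion with uniqueness, a shrinkage argument along a canonical chain of candidate preimages. The $M$ and $N$ cases run in parallel; I write the $M$ argument in detail and indicate how the extra partial column of height $c\in(0,m(\alpha))$ enters the $N$ case.

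For the forward inclusion of the $M$ equation, let $\alpha\in\Q(\mu,\mu+(1))$ and $\beta=\alpha+(w^{m(\alpha)})\in M(\alpha)$. Containment $\beta\supset\mu$ follows by transitivity: $\beta$ contains $\alpha$ as a pattern (delete the $w$ newly added height-$m(\alpha)$ columns), and $\alpha\supset\mu$. For avoidance of $\ne\mu$, suppose an embedding with rows $r_0<r_1<\cdots<r_l$ and columns $c_1<\cdots<c_{\mu_1+1}$ exists in $\beta$. Any row $r_i\le m(\alpha)$ has $\beta_{r_i}=\beta_1\ge c_{\mu_1+1}$ and so would project to all $\mu_1+1$ columns; since the $i$-th row of $\ne\mu$ projects to fewer than $\mu_1+1$ columns when $i\ge 1$, we conclude $r_1,\ldots,r_l>m(\alpha)$. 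In the subcase $c_{\mu_1+1}\le\alpha_1$, every $c_j$ survives as a column of $\alpha$ and the same embedding gives $\alpha\supset\ne\mu\supset\mu+(1)$. In the subcase $c_{\mu_1+1}>\alpha_1$, necessarily $r_0\le m(\alpha)$ and $c_{\mu_1}\le\alpha_{r_1}<\alpha_1$; replacing $c_{\mu_1+1}$ by any column in $(c_{\mu_1},\alpha_1]$ and dropping row $r_1$ yields an embedding of $\mu+(1)$ in $\alpha$. Both subcases contradict $\alpha\in\Av(\mu+(1))$.

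For the reverse inclusion and uniqueness, given $\beta\in\Q(\mu,\ne\mu)$, the candidate preimages---those $\alpha'$ satisfying $m(\alpha')=m(\beta)$ and $\beta\in M(\alpha')$---form the chain $\alpha(w)=\bigl((\beta_1-w)^{m(\beta)},\beta_{m(\beta)+1},\beta_{m(\beta)+2},\ldots\bigr)$ for $0\le w\le W:=\beta_1-\beta_{m(\beta)+1}-1$. Set $w^*=\max\{w\ge 0:\alpha(w)\supset\mu\}$, well-defined since $\alpha(0)=\beta\supset\mu$. Three claims pin down the preimage: (i) $w^*\le W$, since otherwise $\alpha(W+1)\supset\mu$ and an embedding there extends to an embedding of $\ne\mu$ in $\beta$ by prepending row $1$ (of length $\beta_1$) and adjoining a column from $(\beta_{m(\beta)+1},\beta_1]$, contradicting $\beta\in\Av(\ne\mu)$; (ii) $\alpha(w^*)$ avoids $\mu+(1)$, because any embedding of $\mu+(1)$ in $\alpha(w^*)$ either already lives in $\alpha(w^*+1)$ (if $d_{\mu_1+1}<\alpha(w^*)_1$) or, upon deleting its last column, restricts to an embedding of $\mu$ in $\alpha(w^*+1)$ (if $d_{\mu_1+1}=\alpha(w^*)_1$), both contradicting the maximality of $w^*$; (iii) for $w<w^*$, an embedding of $\mu$ in $\alpha(w+1)$ can be adjusted so $r_1=1$, at which point strictness of $\mu$ forces $r_2,\ldots,r_l>m(\beta)$, and adjoining $c_{\mu_1+1}=\alpha(w)_1$ upgrades the embedding to one of $\mu+(1)$ in $\alpha(w)$, excluding $\alpha(w)$ from $\Q(\mu,\mu+(1))$. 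The hardest step is (i): it is the unique place where $\beta\in\Av(\ne\mu)$ (not merely $\Av(\mu+(1))$) is invoked, and without it the canonical shrinkage might exit $[0,W]$ and yield a preimage of the wrong multiplicity. The $N$ equation is proved by the same three-part analysis, with $\ne\mu,\ne\mu+(1)$ in place of $\mu,\mu+(1)$: the parameter $c=m(\beta)$ is read directly off $\beta$, and candidate preimages $\alpha(w)$ are obtained by fusing $\beta$'s top $c$ rows with the adjacent $m(\alpha)-c$ rows of length $\beta_1-1$ into a single top block of height $m(\alpha)$ and then shrinking that block by $w$.
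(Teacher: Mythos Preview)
Your $M$ argument is correct and is essentially the paper's approach with the structural facts of Lemma~\ref{lem:list of prop} unpacked into explicit row/column manipulations; in particular your shrinkage chain $\alpha(w)$ and the choice of $w^*$ recapitulate the proof of Lemma~\ref{lem:Q(mu,nemu)}.  One minor looseness: in claim~(i), ``prepending row $1$'' tacitly assumes the embedding of $\mu$ in $\alpha(W{+}1)$ does not already use row $1$; you should note that since $\mu$ is strict and the first $m(\alpha(W{+}1))>m(\beta)$ rows of $\alpha(W{+}1)$ are equal, the embedding may be shifted so that $s_1>m(\beta)$, after which prepending row $1$ is legitimate.

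Your $N$ argument, however, has a genuine gap.  You describe the candidate preimages as obtained by ``fusing $\beta$'s top $c$ rows with the adjacent $m(\alpha)-c$ rows of length $\beta_1-1$'', but this presupposes that such rows exist, i.e.\ that $\beta_{m(\beta)+1}=\beta_1-1$.  That is a nontrivial consequence of $\beta\in\Q(\ne\mu,\ne\mu+(1))$ and is exactly the content of the step ``$c<m(\alpha)$'' in the paper's proof, where it is established by contradiction: if $\beta=(b_1^c,b_2,\ldots)$ with $b_1>b_2+1$, then from $\beta\supset\ne\mu$ one deduces (via parts \emph{i)} and \emph{ii)} of Lemma~\ref{lem:list of prop}) that $((b_2{+}1)^c,b_2,\ldots)\supset\ne\mu$, whence by \emph{iii)} $\beta\supset\ne\mu+(1)$.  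Without this fact your chain of candidates cannot even be defined.  In addition, the forward inclusion for $N$ is not a straight substitution of $\ne\mu$ for $\mu$ in your $M$ argument: showing $\beta\supset\ne\mu$ from $\alpha\supset\mu$, and $\beta\not\supset\ne\mu+(1)$ from $\alpha\not\supset\mu+(1)$, each requires an argument linking $\mu$ to $\ne\mu$ (the paper handles this by first rewriting $N\Q(\mu,\mu+(1))$ as $\{\gamma+(1^c):\gamma\in\Q(\mu,\ne\mu),\,0<c<m(\gamma)\}$ via the $M$ result, then invoking Lemma~\ref{lem:list of prop}).  Your one-sentence reduction does not supply either of these ingredients.
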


Our next lemma gives the promised explanation as to how $\E$ and $\N$ affect our generating functions. 

\begin{lemma}\label{lem:GF and E/N}
	Let $\mu$ be strict and set $\S = \Q(\mu, \mu+(1))$.  Then
	\begin{equation}\label{eq:scriptE/E}
	  \E F_{\S}(z,t) = F_{\S}(z,1) + F_{E\S}(z,t)  
	\end{equation}
and 
\begin{equation}\label{eq:scriptN/N}
    \N F_{\S}(z,t) = F_{M\S}(z,1) + F_{N\S}(z,t).
\end{equation}
\end{lemma}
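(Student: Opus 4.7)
The plan is to establish both identities by a direct expansion of the operators $\E$ and $\N$ applied to $F_\S(z,t)=\sum_{n,m}a_{n,m}z^nt^m$, where $a_{n,m}=|\{\alpha\in\S:|\alpha|=n,\ m(\alpha)=m\}|$, and then to match each resulting series, term by term, with the enumerator on the right-hand side using the bijections supplied by Lemmas~\ref{lem:E} and~\ref{lem:N}. A useful preliminary observation is that since every $\alpha\in\S=\Q(\mu,\mu+(1))$ contains the nonempty partition $\mu$, one has $a_{n,0}=0$, so $F_\S(z,0)=0$; this kills the boundary term in the formula for $\N$.

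For \eqref{eq:scriptE/E} I would first compute $F_\S(z,1)-ztF_\S(z,zt)=\sum_{n,m}a_{n,m}z^n\bigl(1-(zt)^{m+1}\bigr)$ and divide by $1-zt$ using the finite geometric identity to obtain
$$\E F_\S(z,t)=\sum_{n,m}a_{n,m}z^n\sum_{c=0}^{m}(zt)^c = F_\S(z,1)+\sum_{\alpha\in\S}\sum_{c=1}^{m(\alpha)}z^{|\alpha|+c}t^c.$$
The bijective portion of Lemma~\ref{lem:E} then identifies the remaining double sum with $F_{E\S}(z,t)$: for each $\alpha\in\S$ and each $1\leq c\leq m(\alpha)$, the partition $\alpha+(1^c)$ has weight $|\alpha|+c$ and rightmost-column multiplicity exactly $c$, and these partitions exhaust $E\S$ without repetition.

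For \eqref{eq:scriptN/N} I would expand the two rational factors in the definition of $\N$ as geometric series, obtaining
$$\N F_\S(z,t)=\sum_{\alpha\in\S}z^{|\alpha|}\sum_{w\geq 0}z^{wm(\alpha)}\sum_{c=0}^{m(\alpha)-1}(tz)^c,$$
and then split the inner $c$-sum according to whether $c=0$ or $c\geq 1$. By \eqref{eq:M} together with its uniqueness clause, the map $(\alpha,w)\mapsto\alpha+(w^{m(\alpha)})$ biject onto $M\S$ and preserves $m(\alpha)$, so the $c=0$ piece collapses to $\sum_{\alpha\in\S}\sum_{w\geq 0}z^{|\alpha|+wm(\alpha)}=F_{M\S}(z,1)$. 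Similarly, by \eqref{eq:N}, $(\alpha,w,c)\mapsto\alpha+(w^{m(\alpha)})+(1^c)$ biject onto $N\S$; the image has weight $|\alpha|+wm(\alpha)+c$ and, because $c<m(\alpha)$, multiplicity $c$, so the $c\geq 1$ piece is exactly $F_{N\S}(z,t)$.

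The computation is essentially routine; the only point requiring genuine care is the multiplicity bookkeeping, namely verifying that $m(\alpha+(1^c))=c$ for $1\leq c\leq m(\alpha)$, that $m(\alpha+(w^{m(\alpha)}))=m(\alpha)$ for $w\geq 0$, and that $m(\alpha+(w^{m(\alpha)})+(1^c))=c$ for $0<c<m(\alpha)$. These facts are immediate from the Ferrers-board picture, but they are what dictate the exponents on $t$ in the matched sums and, in particular, explain why the $c=0$ branch in the $\N$ calculation is the $t=1$ specialization $F_{M\S}(z,1)$ rather than a bivariate function.
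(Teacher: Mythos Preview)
Your proof is correct and follows essentially the same approach as the paper: both expand $\E$ and $\N$ applied to $F_\S$ as geometric sums indexed by the pairs $(\alpha,c)$ (respectively $(\alpha,w,c)$), then invoke the uniqueness clauses of Lemmas~\ref{lem:E} and~\ref{lem:N} to identify the resulting double (triple) sums with $F_{E\S}$, $F_{M\S}$, $F_{N\S}$. Your explicit remarks that $F_\S(z,0)=0$ and the multiplicity identities $m(\alpha+(1^c))=c$, etc., are exactly the ``checks'' the paper leaves implicit when it writes the contribution of a single $\alpha$ on each side.
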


\begin{proof}
	We first establish (\ref{eq:scriptE/E}).  By the uniqueness clause in Lemma~\ref{lem:E} we have 
	$$E\S = \bigsqcup_{\alpha\in \S} E(\alpha),$$
	where $\sqcup$ denotes disjoint union.  Consequently (\ref{eq:scriptE/E}) is equivalent to 
$$\E F_{\S}(z,t) = F_{\S}(z,1) + \sum_{\alpha\in \S} F_{E(\alpha)}(z,t).$$
	As such it suffices to consider the contribution of a single $\alpha\in \S$ to both sides of this equation.  Fix some $\alpha\in \S$ with weight $n$ and  let $m=m(\alpha)$.  On the left side of (\ref{eq:scriptE/E}) this $\alpha$ contributes 
	$$\E(z^nt^m) = z^n(1+ zt + \cdots + (zt)^m).$$   
Now consider $\alpha$'s contribution on the right. As $E(\alpha) = \makeset{\alpha+(1^c)}{0<c\leq m}$ we see that $\alpha$ contributes the term $z^n$ and the terms $z^n(zt+(zt)^2+\cdots + (zt)^m)$, respectively.  This proves our first claim.  
	
Next we prove (\ref{eq:N}).  By the uniqueness clauses in Lemma~\ref{lem:N}, it follows that (\ref{eq:N}) is equivalent to  
$$\N F_{\S}(z,t) = \sum_{\alpha\in \S} F_{M(\alpha)}(z,1) + \sum_{\alpha\in \S} F_{N(\alpha)}(z,t).$$
Again consider the contribution of a single partition $\alpha\in\S$, with weight $n$ and $m = m(\alpha)>0$ to both sides of this equation.  On the left, $\alpha$ contributes 
$$\N(z^nt^m) =\left(\frac{1}{1-z^m}\right)\left(\frac{1-(tz)^m}{1-tz}\right)z^n =(1+z^m+z^{2m}+\cdots)(1+ tz + \cdots + (tx)^{m-1})z^n.$$
On the right side, we see that $\alpha$ contributes, via $F_{M(\alpha)}(z,1)$, the terms 
$$(1+z^m+z^{2m}+\cdots)z^n$$ 
since $M(\alpha) = \makeset{\alpha+ (w^m)}{w\geq 0}$.  Additionally, $\alpha$ contributes, via  $F_{N(\alpha)}(z,t)$, the terms
$$(1+z^m+z^{2m}+\cdots)(tz + \cdots + (tz)^{m-1})z^n$$
as $N(\alpha) = \makeset{\alpha+ (w^m) + (1^c)}{w\geq 0\textrm{ and } 0<c<m}$.  This proves (\ref{eq:scriptN/N}).  
\end{proof}

We now turn to the proof of our first theorem.  

\begin{proof}[Proof of Theorem~\ref{thm:F}]
We proceed by induction on $|\mu|$ so that our base case is $\mu = (2)$ with $\tau = (1)$.  In this case $\Theta_\mu = \emptyset$, $\Av(\tau) = \emptyset$, and $\Q(\tau,\mu) = \makeset{(1^c)}{c>0}$.  So $F_\tau = 0$ and 
$$\Theta_\mu\left(\frac{zt}{1-zt}\right) = \left(\frac{zt}{1-zt}\right) = F_{\tau,\mu}(z,t).$$

Now consider some super-strict partition $\mu$ with $|\mu|>2$.  We entertain two cases depending on the difference between the first two parts of $\mu=\tau +(1)$. 

\medskip\noindent
\textbf{Case:} $\mu_1 \geq \mu_2+3$  
\medskip

In this case we can write $\mu = \rho+(2)$ and $\tau = \rho+(1)$ for some super-strict partition $\rho$.  So $\Theta_\mu = \Theta_{\rho+(1)}\cdot (\E)$ where  $\cdot$ is concatenation.  Computing we now have
\begin{align*}
\Theta_\mu\left(\frac{zt}{1-zt}\right) &= \E\left(F_{\rho}(z,1) + F_{\rho,\rho+(1)}(z,t)\right)	&&\textrm{(induction)}\\
&= F_{\rho}(z,1) + \E F_{\rho,\rho+(1)}(z,t),	\\
&=F_{\rho}(z,1) + F_{\rho,\rho+(1)}(z,1) + F_{\rho+(1),\rho+(2)}(z,t) &&\textrm{(Lemmas~\ref{lem:E} and \ref{lem:GF and E/N})}\\
& = F_{\tau}(z,1) + F_{\tau,\mu}(z,t),
\end{align*}
where the last equality follows since $\Av(\rho)\sqcup \Q(\rho,\rho+(1)) = \Av(\rho+(1))$.

\medskip\noindent
\textbf{Case:} $\mu_1 =\mu_2+2$ 
\medskip

As $|\mu|>2$ we have $\mu_2>0$. Set $\rho = (\mu_2,\mu_3,\ldots)$ so that  $\rho+(1)$ has weight at least 2.  Hence $\rho+(1)$ is a super-strict partition.  In terms of $\rho$ we have
$$\mu = \ne\rho +(1)\quad \textrm{and}\quad \tau = \ne\rho.$$ 
Now $\Theta_\mu = \Theta_{\rho+(1)}\cdot (\N)$.   Computing we now have
\begin{align*}
\Theta_\mu\left(\frac{zt}{1-zt}\right) &= \N\left(F_{\rho}(z,1) + F_{\rho,\rho+(1)}(z,t)\right)	&&\textrm{(induction)}\\
&= F_{\rho}(z,1) + \N F_{\rho,\rho+(1)}(z,t),	\\
&=F_{\rho}(z,1) + F_{\rho,\ne\rho}(z,1) + F_{\ne\rho,\ne\rho+(1)}(z,t) &&\textrm{(by Lemmas~\ref{lem:N} and \ref{lem:GF and E/N})}\\
& = F_{\tau}(z,1) + F_{\tau,\mu}(z,t). &&  \qedhere \end{align*}  

\end{proof}

We now turn our attention to showing that $F_\mu(z,1)$ is rational when $\mu$ is super-strict. We begin with a few definitions.

\begin{definition}
We say a bivariate generating function $F(z,t)$ is \textbf{nice} provided that $F(z,z^k)$ is rational for all $k\geq 0$.  Further, we say $F(z,t)$ is \textbf{very nice} provided that $F(z,t)$ is nice and $F(z,0)$ is also rational.  
\end{definition}

\begin{lemma}\label{lem:rho}
If $F(z,t)$ is nice, then $\E F(z,t)$ is very nice.  
\end{lemma}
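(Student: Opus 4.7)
The plan is to unpack the definitions and check the two conditions directly, since the operator $\E$ has a very explicit form.

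First I would write out
\[
\E F(z,t) = \frac{F(z,1)-zt\,F(z,zt)}{1-zt},
\]
and verify that $\E F(z,t)$ is nice. Substituting $t=z^k$ for $k\geq 0$ gives
\[
\E F(z,z^k) = \frac{F(z,1)-z^{k+1}\,F(z,z^{k+1})}{1-z^{k+1}}.
\]
Since $F(z,t)$ is nice, $F(z,1)=F(z,z^0)$ is rational, and $F(z,z^{k+1})$ is rational; hence the numerator and denominator are both rational in $z$, so $\E F(z,z^k)$ is rational for every $k\geq 0$.

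Next I would check that $\E F(z,t)$ is very nice by evaluating at $t=0$. Directly,
\[
\E F(z,0) = \frac{F(z,1)-z\cdot 0\cdot F(z,0)}{1-z\cdot 0} = F(z,1),
\]
which is rational (again using niceness of $F$ with $k=0$). Combining these two observations yields the result.

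There is no real obstacle here: once the formula for $\E$ is written out, the substitutions $t=z^k$ and $t=0$ both reduce to expressions in quantities known to be rational by hypothesis. The only subtlety worth flagging is that the argument uses niceness at the specific values $k=0$ and the shifted index $k+1$, both of which are covered by the definition of nice. \qed
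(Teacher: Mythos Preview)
Your proof is correct and follows essentially the same approach as the paper: both arguments write out the definition of $\E$, substitute $t=z^k$ to see that $\E F(z,z^k)$ is a rational expression in $F(z,1)$ and $F(z,z^{k+1})$, and note that $\E F(z,0)=F(z,1)$ is rational. You have simply spelled out the details a bit more explicitly than the paper does.
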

\begin{proof}
Assume $F(z,t)$ is nice.  By definition of $\E$ we have 
$$ \E F(z,t) = \frac{F(z,1)-ztF(z,zt)}{1-zt}.$$
From this and our assumption about $F$ it follows that $\E F(z,z^k)$, for $k\geq 0$, and $\E F(z,0) = F(z,1)$ is rational. Hence $\E F(z,t)$ is very nice. 
\end{proof}

\begin{lemma}\label{lem:upsilon}
If $F(z,t)$ is very nice, then $\N F(z,t)$ is nice.  
\end{lemma}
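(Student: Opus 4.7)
The plan is to compute $\N F(z,z^k)$ directly from the defining formula of $\N$ and show that, for each $k\geq 0$, it can be written as a rational function of $z$ whose ingredients are the specializations $F(z,0)$ and $F(z,z^j)$ for $0\leq j\leq k$. Since $F$ is very nice by hypothesis, every one of these ingredients is rational, so $\N F(z,z^k)$ will be rational as well, which is exactly the statement that $\N F(z,t)$ is nice.

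The central simplification is the elementary factorization $1 - z^{m(k+1)} = (1-z^m)(1 + z^m + z^{2m} + \cdots + z^{km})$, from which
$$\left(\frac{1}{1-z^m}\right)\left(\frac{1-z^{m(k+1)}}{1-z^{k+1}}\right) = \frac{1}{1-z^{k+1}}\sum_{j=0}^{k} z^{jm}.$$
Substituting $t = z^k$ in the definition of $\N F$ and applying this identity lets me pull the sum over $j$ outside, so that the inner double sum becomes $\sum_{n\geq 1,\, m\geq 1} a_{n,m} z^{n+jm}$. Recognizing this as $F(z,z^j) - F(z,0)$ (the subtraction accounting for the missing $m=0$ terms), I arrive at
$$\N F(z,z^k) = F(z,0) + \frac{1}{1-z^{k+1}}\left[\sum_{j=0}^{k} F(z,z^j) - (k+1) F(z,0)\right].$$

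Since $F$ is very nice, $F(z,0)$ is rational and each $F(z,z^j)$ with $0\leq j\leq k$ is rational, so the right-hand side is a rational function of $z$. As $k\geq 0$ was arbitrary, $\N F(z,t)$ is nice, completing the proof. The only delicate point, and thus the main obstacle, is justifying the interchange of summations when plugging in the finite geometric identity; this is routine at the level of formal power series in $z$, since each fixed power $z^N$ receives contributions only from finitely many pairs $(n,m)$ with $n\leq N$.
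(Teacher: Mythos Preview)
Your proof is correct and follows essentially the same route as the paper: both substitute $t=z^k$, use the factorization $1-z^{m(k+1)} = (1-z^m)\sum_{j=0}^k z^{jm}$ to cancel the $1/(1-z^m)$, and recognize the resulting inner sums as the specializations $F(z,z^j)-F(z,0)$. The paper merely abbreviates $F(z,t)-F(z,0)$ as $H(z,t)$, so the two presentations are identical in substance.
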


\begin{proof}
Let
$$F(z,t) = \sum_{n\geq 1}\sum_{m\geq 0} a_{n,m} z^nt^m.$$
By definition of $\N$ we have, for $m\geq 0$, that
\begin{align*}
\N F(z,z^k) &= \sum_{n\geq 1}\sum_{m\geq 1} a_{n,m} \left(\frac{1}{1-z^m}\right)\left(\frac{1-z^{m(k+1)}}{1-z^{k+1}}\right)z^n + F(z,0)\\
&=\frac{1}{1-z^{k+1}} \sum_{n\geq 1}\sum_{m\geq 1} a_{n,m} (1+z^m + z^{2m} + \cdots + z^{km})z^n + F(z,0)\\
&= \frac{1}{1-z^{k+1}} \left(H(z,1) +H(z,z)+ \cdots+ H(z,z^k)\right)  + F(z,0),
\end{align*}
where $H(z,t):= F(z,t) - F(z,0)$.  
As $F(z,t)$ is very nice, our claim immediately follows.  
\end{proof}

The proof of the next lemma follows immediately from the previous two lemmas.  As such we omit a formal proof.   
\begin{lemma}\label{lem:no consecutive N's}
Consider an arbitrary $F(z,t)$ that is very nice.  Let $\Theta$ be a sequence of the operators $\E$ and $\N$ so that $\Theta$ contains no consecutive $\N$'s.  Then 
$\Theta F(z,1)$ is rational.  
\end{lemma}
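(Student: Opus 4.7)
The plan is to proceed by induction on the length of the sequence $\Theta$, maintaining at each stage a precise bookkeeping of whether the partially processed function is nice or very nice. Lemmas~\ref{lem:rho} and \ref{lem:upsilon} together say that $\E$ lifts a nice function to a very nice one, while $\N$ drops a very nice function to a merely nice one. Thus the composable states form a small automaton: from \emph{very nice} one can apply either $\E$ or $\N$ (landing in very nice or nice, respectively), whereas from \emph{nice} one can only apply $\E$ (landing in very nice). The hypothesis that $\Theta$ contains no two consecutive $\N$'s is exactly the statement that every time we want to apply $\N$ we are in the very nice state, so that the move is legal.

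More concretely, I would write $\Theta = (\Theta_1, \dots, \Theta_r)$ and define $G_0 = F$, $G_i = \Theta_i G_{i-1}$. I would prove by induction on $i$ that $G_i$ is at least nice, and moreover that $G_i$ is very nice whenever $i=0$ or $\Theta_i = \E$. The base case $i=0$ is simply the hypothesis that $F$ is very nice. For the inductive step, if $\Theta_i = \E$, then $G_{i-1}$ is at least nice (by the inductive hypothesis), so Lemma~\ref{lem:rho} gives that $G_i = \E G_{i-1}$ is very nice. If $\Theta_i = \N$, then the no-consecutive-$\N$'s condition forces either $i=1$ or $\Theta_{i-1} = \E$; in either case the inductive hypothesis gives that $G_{i-1}$ is very nice, so Lemma~\ref{lem:upsilon} yields that $G_i = \N G_{i-1}$ is nice.

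After applying all $r$ operators, $G_r = \Theta F$ is at least nice, and by the definition of nice, $G_r(z, z^0) = \Theta F(z, 1)$ is rational, which is the desired conclusion.

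There is essentially no obstacle beyond setting up the right invariant: the content has been fully absorbed into Lemmas~\ref{lem:rho} and \ref{lem:upsilon}, and the "no consecutive $\N$'s" hypothesis is precisely calibrated to the fact that $\N$ consumes the "very" in "very nice." The only mild subtlety is remembering that "very nice" implies "nice" so that the induction makes sense at every step, and that rationality of the final $t=1$ specialization is recovered from the $k=0$ instance of the definition of niceness.
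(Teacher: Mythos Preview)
Your proof is correct and is exactly the argument the paper has in mind: the authors omit a formal proof, stating only that the result ``follows immediately from the previous two lemmas,'' and your induction with the nice/very-nice invariant is precisely the immediate argument they are alluding to. There is nothing to add or change.
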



\begin{theorem}\label{thm:rational}
	If $\mu$ is super-strict, then $F_\mu(z,1)$ is rational.  
\end{theorem}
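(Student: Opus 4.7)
The plan is to realize this theorem as an immediate corollary of Theorem~\ref{thm:F} and Lemma~\ref{lem:no consecutive N's}. By Theorem~\ref{thm:F}, for any super-strict $\mu$ with $|\mu|\geq 2$ we have
$$F_\mu(z,1) = \left.\Theta_\mu\!\left(\frac{zt}{1-zt}\right)\right|_{t=1},$$
so the task reduces to verifying the hypotheses of Lemma~\ref{lem:no consecutive N's} applied to the seed series $F(z,t)=\frac{zt}{1-zt}$. The boundary cases $\mu=\varnothing$ and $\mu=(1)$ can be handled separately and trivially, since then $\Av(\mu)$ is finite and $F_\mu(z,1)$ is a polynomial.

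First I would check that $\frac{zt}{1-zt}$ is very nice: indeed $F(z,z^k)=\frac{z^{k+1}}{1-z^{k+1}}$ is rational for every $k\geq 0$, and $F(z,0)=0$ is certainly rational. Second, I would verify the crucial combinatorial fact that $\Theta_\mu$ contains no consecutive $\N$'s whenever $\mu$ is super-strict. Writing out the southeast border of a $k$-part partition $\mu$ as
$$e^{\mu_k}\, n\, e^{\mu_{k-1}-\mu_k}\, n\, \cdots\, n\, e^{\mu_1-\mu_2}\, n,$$
and removing the leading $e$ together with the trailing $e,n$ (both present because $\mu_k\geq 2$ and $\mu_1-\mu_2\geq 2$ whenever $k\geq 2$), yields the reduced sequence
$$e^{\mu_k-1}\, n\, e^{\mu_{k-1}-\mu_k}\, n\, \cdots\, n\, e^{\mu_2-\mu_3}\, n\, e^{\mu_1-\mu_2-1}.$$
Each of the $k-1$ $n$'s appearing here absorbs the $e$ immediately to its right to form an $\N$, and the super-strict inequalities $\mu_k\geq 2$ and $\mu_i-\mu_{i+1}\geq 2$ guarantee that at least one $e$ survives in every block sandwiched between two successive $n$'s, becoming an $\E$. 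Consequently at least one $\E$ separates every pair of $\N$'s in $\Theta_\mu$. (The case $k=1$ is immediate, since $\Theta_\mu$ then consists purely of $\E$'s.)

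With both hypotheses in hand, Lemma~\ref{lem:no consecutive N's} yields that $F_\mu(z,1) = \Theta_\mu\!\left(\frac{zt}{1-zt}\right)\!\big|_{t=1}$ is rational. The only substantive step is the combinatorial bookkeeping in the second paragraph, which translates the super-strict hypothesis on $\mu$ into the ``no consecutive $\N$'s'' hypothesis on $\Theta_\mu$; everything else is an immediate assembly of results already developed in this section.
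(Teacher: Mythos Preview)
Your proof is correct and follows exactly the same approach as the paper's: handle $|\mu|\leq 1$ trivially, then for $|\mu|\geq 2$ combine Theorem~\ref{thm:F} with Lemma~\ref{lem:no consecutive N's} after checking that $\frac{zt}{1-zt}$ is very nice and that super-strictness forces $\Theta_\mu$ to have no consecutive $\N$'s (the paper simply asserts this last fact, whereas you spell out the border bookkeeping). One harmless slip: you assert $\mu_k\geq 2$, but super-strict partitions may have smallest part~$1$ (e.g.\ $\mu=(3,1)$); the leading $e$ is present simply because $\mu_k\geq 1$, so your conclusion is unaffected.
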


\begin{proof}
	If $\mu=(1)$ then $F_\mu(z,1) = 0$ which is rational.  Now assume $\mu$ has weight at least 2.  As $\mu$ is super-strict then $\Theta_\mu$ does not contain consecutive $\N$'s.  As $\mu$ has weight at least 2, our claim follows by Theorem~\ref{thm:F}, the fact that $\frac{zt}{1-zt}$ is very nice, and Lemma~\ref{lem:no consecutive N's}.  
\end{proof}

\begin{corollary}\label{cor:K}
	Fix some $K\geq 1$ and define $S$ to be the set of partitions $\alpha$ such that $|\alpha_i - \alpha_j| \leq K$ for all $i,j\geq 0$.  Then $F_S(z,1)$ is rational.
\end{corollary}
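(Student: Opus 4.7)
The plan is to identify $S$ with a pattern-avoidance class $\Av(\mu)$ for a carefully chosen super-strict partition $\mu$, and then invoke Theorem~\ref{thm:rational}. The natural candidate is $\mu=(K+2,1)$, whose two positive parts $K+2$ and $1$ differ by $K+1\geq 2$, so that $\mu$ is super-strict for every $K\geq 1$.

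The key step is the identification $S=\Av\bigl((K+2,1)\bigr)$. I would show that $\alpha$ contains $(K+2,1)$ if and only if $\alpha$ has two positive parts $a\geq b$ with $a-b\geq K+1$. In one direction, suppose we obtain $(K+2,1)$ from $\alpha$ by deleting all but two rows, of lengths $a\geq b$, together with some set of columns. Any column that survives in the bottom row must also survive in the top row, so the columns occupied by the bottom row lie among positions $\{1,\dots,b\}$. Since the resulting bottom row has length $1$ and the resulting top row has length $K+2$, at least $K+1$ of the surviving columns must come from $\{b+1,\dots,a\}$, forcing $a-b\geq K+1$. Conversely, if $\alpha$ has positive parts $a$ and $b$ with $a\geq b+K+1$, then retaining exactly those two rows, exactly one column from $\{1,\dots,b\}$, and any $K+1$ columns from $\{b+1,\dots,a\}$ produces a copy of $(K+2,1)$. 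Thus $\alpha$ lies in $S$ (i.e., has no two positive parts differing by more than $K$) if and only if $\alpha$ avoids $(K+2,1)$.

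Once this identification is in place, Theorem~\ref{thm:rational} applied to the super-strict partition $(K+2,1)$ gives immediately that $F_S(z,1)=F_{(K+2,1)}(z,1)$ is rational.

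The only real obstacle is the combinatorial translation between the part-difference condition defining $S$ and the containment condition for $(K+2,1)$; after that row/column bookkeeping is carried out, the corollary is a one-line consequence of Theorem~\ref{thm:rational}.
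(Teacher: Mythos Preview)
Your proposal is correct and follows exactly the paper's approach: identify $S$ with $\Av\bigl((K+2,1)\bigr)$, observe that $(K+2,1)$ is super-strict since $K\geq 1$, and then apply Theorem~\ref{thm:rational}. The only difference is that you spell out the ``simple check'' of the identification $S=\Av\bigl((K+2,1)\bigr)$ that the paper leaves to the reader.
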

\begin{proof}
	Consider the partition $\mu = (K+2,1)$. As $K\geq 1$ we see that $\mu$ is super-strict.     A simple check shows that $S = \Av(\mu)$.   It now follows immediately by Theorem~\ref{thm:rational} that $F_S(z,1)$ is rational.  
\end{proof}

We end this section with an explanation of how one can explicitly determine the rational function $F_\mu(z,1)$ for a specific super-strict partition $\mu$.  To illustrate, consider the example when $\mu = (5,1)$.  Here $\Theta_\mu = (\N,\E,\E)$ and define
$$F_0(z,t) = \dfrac{zt}{1-zt},\quad F_1 = \N F_0(z,t), \quad F_2 = \E F_1(z,t), \quad F_3 = \E F_2(z,t),$$
By Theorem~\ref{thm:F} we have $F_\mu(z,1) = F_3(z,1)$ and by definition of $\E$ we see that 
$$F_3(z,1) = \left.\E F_2(z,t)\right|_{t=1} = \dfrac{F_2(z,1) - zF_2(z,z) }{1-z}.$$

So to compute $F_3(z,1)$ we must compute $F_2(z,1)$ and $F_2(z,z)$.  Continuing in this manner, we obtain the following dependencies illustrated in the tree below. Note that the proof of Lemma~\ref{lem:no consecutive N's}, yields the dependencies corresponding to $\N$.   We omit references to $F_0(z,0)$ as this is 0.  

\begin{center}
\begin{forest}
for tree={l sep=30pt, edge={<-,>=latex}}
[{$F_3(z,1)$}
    [{$F_2(z,1)$},  edge label={node[xshift = -8.2em, yshift= 3em] {$\E$:}}  
      [{$F_1(z,1)$},edge label={node[xshift = -5em, yshift= 3em] {$\E$:}} 
      		[{$F_0(z,1)$}, edge label={node[xshift = -5em, yshift= 2.6em] {$\N$:}}
      		]
      ] 
      [{$F_1(z,z)$}
      		[{$F_0(z,1)$}]
      		[{$F_0(z,z)$}]
      ] 
	]
    [{$F_2(z,z)$}
      [{$F_1(z,1)$}
      		[{$F_0(z,1)$}]
      ] 
      [{$F_1(z,z^2)$}
      		[{$F_0(z,1)$}]
      		[{$F_0(z,z)$}]
      		[{$F_0(z,z^2)$}]
      ] 
  ] 
]
\end{forest}
\end{center}
In this fashion we see that $F_\mu(z,1)$ may be computed for any super-strict partition $\mu$.

\begin{remark}\label{rmk:metacyclic}
The pattern $\mu = (5,2)$ has a surprising connection to group theory.  We recall that a group $G$ is said to be a metacyclic if there exists a cyclic normal subgroup $N$ such that $G/N$ is cyclic.  In \cite{Liedahl:Enume1996}, Liedahl enumerates metacyclic $p$-groups and proves that for any odd prime $p$ the number of such groups of order $p^n$ is given by the generating function
$$G(z) = \frac{-z(z^7 - 2z^5 + z^3 + z^2 - z - 1)}{(z - 1)^4(z + 1)^2(z^2 + z + 1)}.$$
Coincidentally, the generating function $F_{(5,2)}(z,1)$, which was computed using the  recursive algorithm described above, is equal to $G(z)$.   We do not know of a bijective proof of this fact.
\end{remark}

\subsection{Proofs of Lemmas~\ref{lem:E} and \ref{lem:N}}\label{sec:lemmas 1 and 2}

To prove Lemmas~\ref{lem:E} and \ref{lem:N} we first consolidate some basic facts about partition containment.   

\begin{lemma}\label{lem:list of prop}
Let $\alpha,\mu$ be partitions with $\mu$ strict.  Let $\alpha^-$ be any partition that can be obtained by deleting the top $k<m(\alpha)$ rows from $\alpha$.  Fixing $a>\alpha_1$ and $m>0$ set
$$\alpha^+ = (a^m, \alpha_1,\alpha_2,\ldots).$$
We have the following:
\begin{enumerate}[label=\roman*)]
	\item $\alpha$  contains $\mu$ if and only if $\alpha^-$ contains $\mu$.
	\item $\alpha$ contains $\mu$ if and only if $\alpha^+$ contains $\ne\mu$.
	\item $\alpha$ contains $\mu$ if and only if $\alpha+(1^c)$ contains $\mu+(1)$ where $0<c\leq m(\alpha)$.  
\end{enumerate}
\end{lemma}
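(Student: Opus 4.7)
The plan is to handle each of (i), (ii), (iii) by treating the forward and backward directions separately. The backward directions (complex shape contains extended pattern $\Rightarrow$ simpler shape contains original pattern) will require a short case analysis on whether the extra structure of the complex shape is used in the given reduction, while the forward directions require actively constructing a reduction of the more complex shape from a given reduction $(R, C)$. The unifying technical point I plan to exploit is that strictness of $\mu$ forbids repeated parts in $\mu$, $\mu+(1)$, and $\ne\mu$, which caps at $1$ the number of rows of the top ``interchangeable'' length that can appear in any valid reduction.

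For (i), the backward direction is immediate since $\alpha^-$'s rows embed into $\alpha$'s. For the forward direction, I start with any $(R, C)$ reducing $\alpha$ to $\mu$ and note that rows $1, \ldots, m(\alpha)$ of $\alpha$ all have length $\alpha_1$ and contribute $|C|$ boxes each; strictness of $\mu$ then forces $|R \cap \{1, \ldots, m(\alpha)\}| \leq 1$. Using $k < m(\alpha)$, any row of $R$ in $\{1, \ldots, k\}$ can be swapped for an unused row in $\{k+1, \ldots, m(\alpha)\}$ of the same length, producing a reduction that avoids the deleted rows.

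For (ii), for the forward direction I first replace $C$ with $C \cap \{1, \ldots, \alpha_{r_1}\}$ where $r_1 = \min R$, so that $|C| = \mu_1$; then in $\alpha^+$ I take rows $\{1\} \cup (R+m)$ and columns $C \cup \{\alpha_1+1\}$. Row $1$ has length $a$ and contributes $\mu_1+1$, while each shifted row has length $\leq \alpha_1$ and ignores the new column, reproducing the parts $\mu_1, \mu_2, \ldots$ and yielding $\ne\mu$. For the backward direction, given $(R', C')$ reducing $\alpha^+$ to $\ne\mu$, I observe that each top-rectangle row contributes $|C'|$ boxes, so strictness of $\ne\mu$ permits at most one such row in $R'$: if none appears, the reduction lives in $\alpha$ and produces $\ne\mu \supset \mu$; if one does, it accounts for the part $\mu_1+1$, and the remaining rows (with columns $C' \cap \{1, \ldots, \alpha_1\}$) reduce $\alpha$ to $\mu$.

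For (iii), for the forward direction I modify $(R, C)$ (as in (ii)) so that $R \cap \{1, \ldots, m(\alpha)\} = \{1\}$ and row $1$ contributes $\mu_1$; then rows $R$ with columns $C \cup \{\alpha_1+1\}$ reduce $\alpha+(1^c)$ to $\mu+(1)$, since row $1$ (being among the top $c$ extended rows) picks up the new box to contribute $\mu_1+1$, whereas every other row in $R$ lies strictly below $m(\alpha) \geq c$ and is unaffected. For the backward direction, if $\alpha_1+1 \notin C$ the reduction lives within $\alpha$ and already gives $\mu+(1) \supset \mu$; otherwise strictness of $\mu+(1)$ isolates a unique row $r^* \in R$ with $r^* \leq c$ and $|C| = \mu_1+1$, and deleting column $\alpha_1+1$ from $C$ yields a reduction of $\alpha$ to $\mu$ in which $r^*$ contributes $\mu_1$. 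The main difficulty throughout is ensuring that the modified reduction produces exactly the claimed partition rather than one with duplicated or extra parts; strictness of the relevant pattern is what uniformly resolves this.
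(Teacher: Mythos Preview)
Your proof is correct and follows the same strategy as the paper's: both hinge on the observation that, because $\mu$ (and hence $\mu+(1)$ and $\ne\mu$) is strict, any collection of rows kept in a reduction must have pairwise distinct lengths, so at most one row from each block of equal-length rows can be used; this lets you freely swap among such rows. The paper's proof is far terser---it declares several directions ``immediate'' and cites the distinct-lengths fact for the rest---while you spell out each case explicitly, but the content is the same.
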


\begin{proof}
The reverse direction of \emph{i)} and the forward direction of \emph{ii)} are immediate.  The remaining directions follow from the following fact.  If $\alpha$ contains a strict partition $\mu$ then the rows not deleted from $\alpha$ to obtain $\mu$ must be of distinct length.  

For the proof of \emph{iii)} the reverse direction is immediate.  For the forward direction assume the set of rows and columns deleted to obtain $\mu$ are $R$ and $C$, respectively.  First note that we may assume $1\notin R$ as otherwise set
$$R'= (R \setminus \{1\}) \cup \{i\} \quad\textrm{ and }\quad C'=C\cup \{\alpha_i+1,\alpha_i+2,\ldots, \alpha_1\}$$
where $i$ was the least index not already included in $R$. 
Observe that deleting the rows in $R'$ and columns in $C'$ will also yield $\mu$.  This, combined with the fact mentioned in the previous paragraph, implies that $\alpha+(1^c)$ contains $\mu+(1)$.
\end{proof}

In what follows we make frequent use of this lemma.  To facilitate readability we only reference a given statement within this lemma by its number with no explicit reference to the lemma.  

\begin{proof}[Proof of Lemma~\ref{lem:E}]
For any $0<c\leq m(\alpha)$ it follows from \emph{iii)} that
$$\alpha\in \Q(\mu,\mu +(1))\quad \Longleftrightarrow\quad  \alpha +(1^c)\in \Q(\mu+(1),\mu+(2)).$$
In particular this implies that $E\Q(\mu,\mu+(1)) \subseteq \Q(\mu+(1),\mu+(2))$.  As every partition in $\Q(\mu+(1),\mu+(2))$ has at least two columns (as such partitions contain $\mu+(1)$) we can write any element in $\Q(\mu+(1),\mu+(2))$ as $\alpha+(1^c)$ for some partition $\alpha$ with $0<c\leq m(\alpha)$.  This together with our first observation yields the reverse inclusion.  

For the uniqueness claim, fix $\beta\in E\Q(\mu,\mu+(1))$ and assume $\alpha, \eta \in \Q(\mu,\mu+(1))$ such that 
$$\alpha +(1^c) = \beta = \eta +(1^d)$$
where $c\leq m(\alpha)$ and $d\leq m(\eta )$.   As $c,d>0$ it follows that the  rightmost column in $\beta$ has length $c=d$.  So, $\alpha = \beta$.  This completes our proof.
\end{proof}

\begin{lemma}\label{lem:Q(mu,nemu)}
	Let $\mu$ be strict and fix $\alpha \in \Q(\mu,\ne\mu)$.  Then  $\alpha$ decomposes uniquely as 
$$\alpha = (b_1^m,b_2,b_3,\ldots) +(w^m)$$
where $w\geq 0$, $b_1>b_2$, and $(b_1^m,b_2,b_3,\ldots) \in \Q(\mu, \mu+(1))$.  
\end{lemma}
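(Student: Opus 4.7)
The plan is a threshold argument on the unknown width $b_1$. Set $m=m(\alpha)$ and $a=\alpha_{m+1}$; any decomposition of the stated form is forced to satisfy $b_j=\alpha_{m+j-1}$ for $j\geq 2$ and $b_1+w=\alpha_1$, so only $b_1\in(a,\alpha_1]$ remains free, subject to $\beta_{b_1}\in\Q(\mu,\mu+(1))$, where I abbreviate $\beta_b:=(b^m,a,\alpha_{m+2},\alpha_{m+3},\ldots)$.

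For $b>a$ one has $m(\beta_b)=m$ and $\beta_{b+1}=\beta_b+(1^m)$, so part~(iii) of Lemma~\ref{lem:list of prop} yields the key equivalence: $\beta_b$ contains $\mu$ if and only if $\beta_{b+1}$ contains $\mu+(1)$. Containment of $\mu+(1)$ in $\beta_b$ is monotone nondecreasing in $b$, so I let $b^*$ be the smallest $b>a$ for which $\beta_b$ contains $\mu+(1)$, or $b^*=\infty$ if no such $b$ exists. Then $\beta_b$ avoids $\mu+(1)$ precisely when $b<b^*$, and contains $\mu$ precisely when $b\geq b^*-1$; consequently $\beta_b\in\Q(\mu,\mu+(1))$ if and only if $b=b^*-1$. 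This already delivers uniqueness, provided I can verify that $b^*-1\in(a,\alpha_1]$.

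The upper bound $b^*\leq\alpha_1+1$ is immediate from~(iii): $\beta_{\alpha_1+1}=\alpha+(1^m)$ contains $\mu+(1)$ because $\alpha$ contains $\mu$. The main obstacle is the lower bound $b^*\geq a+2$. Applying~(iii) once more to $\gamma:=(a^{m+1},\alpha_{m+2},\alpha_{m+3},\ldots)$ --- which satisfies $\gamma+(1^m)=\beta_{a+1}$ and $m\leq m(\gamma)$ --- reduces the task to showing that $\gamma$ does not contain $\mu$. This is where the avoidance hypothesis $\alpha\in\Q(\mu,\ne\mu)$ is essential: were $\gamma$ to contain $\mu$, via some sequence of witnessing rows of $\gamma$ (all of length at most $a$), then prepending row $1$ of $\alpha$ (of length $\alpha_1>a$) to that witness would furnish rows of $\alpha$ of strictly increasing index whose lengths witness containment of $\ne\mu=(\mu_1+1,\mu_1,\mu_2,\ldots)$. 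Indeed, the prepended row supplies the part $\mu_1+1$ (with gap $\alpha_1-a\geq 1$ to the next row), the former top of the witness supplies the part $\mu_1$, and the remaining row-length and gap conditions transfer directly from the $\mu$-containment in $\gamma$. This contradicts $\alpha$ avoiding $\ne\mu$, completing the sketch.
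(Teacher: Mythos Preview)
Your proof is correct and is essentially the paper's argument in a different dress: the paper's minimal column index $i$ is precisely your $b^*-1$, and your key step---showing $\gamma=(a^{m+1},\alpha_{m+2},\ldots)$ cannot contain $\mu$ lest $\alpha$ contain $\ne\mu$---is exactly the paper's contradiction, which it phrases via parts \emph{i)} and \emph{ii)} of Lemma~\ref{lem:list of prop} rather than your direct row-prepending. The one presentational difference is that your threshold characterisation $\beta_b\in\Q(\mu,\mu+(1))\iff b=b^*-1$ yields uniqueness for free, whereas the paper establishes uniqueness by a separate short argument using \emph{iii)}.
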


\begin{example}
Take $\mu = (3,1)$ so $\ne\mu = (4,3,1)$ and $\alpha = (7,7,7,4,3,3)$.  Note that $\alpha \in \Q(\mu,\ne\mu)$ since $\alpha$ contains $\mu$, but not $\ne\mu$.  In this case $\alpha$ decomposes as $\alpha = (5,5,5,4,3,3)+(2,2,2)= (5^3,4,3,3)+(2^3)$, where $(5^3,4,3,3) \in \Q(\mu,\mu + (1))$.
\end{example}

\begin{proof}[Proof of Lemma~\ref{lem:Q(mu,nemu)}]
	Set $\alpha = (a_1^m, a_2,\ldots) \in \Q(\mu,\ne\mu)$ so that $a_1>a_2$ and $m>0$.  
Starting with the fact that $\alpha$ contains $\mu$ define $i$ to be the smallest column index so that if $\beta$ is the partition obtained by deleting columns $i+1,i+2,\ldots$ then $\beta$ contains $\mu$.  It is immediate from our choice of $i$ that $\beta$ avoids $\mu+(1)$.  For the existence claim, it now suffices to prove that $i>a_2$ as then we have 
$$\alpha = \underbrace{(i^m, a_2,a_3\ldots )}_{\alpha'} + (a_1-i)^m.$$ 
For a contradiction, assume $i\leq a_2$.  So
$$\alpha' = (a_2^{m+1}, a_3,\ldots),$$
the partition obtained by deleting all columns of length $m$ from $\alpha$, contains $\mu$.  By \emph{i)} it follows that $(a_2, a_3,\ldots)$ contains $\mu$. As $a_1>a_2$, it follows from \emph{ii)} that $\alpha = (a_1^m,a_2,a_3,\ldots)$ contains $\ne\mu$.   This contradicts our choice of $\alpha$. 

To prove uniqueness assume for a contradiction that we can write 
$$\underbrace{(c_1^m, c_2,c_3 \ldots)}_\eta  + (w^m) = \alpha = \underbrace{(d_1^m, c_2, c_3, \ldots)}_\delta + (u^m),$$
where $d_1>c_1>c_2$ and $\eta ,\delta\in \Q(\mu,\mu+(1))$.  As $\eta $ contains $\mu$ we see by \emph{iii)} that $\eta +(1^m)$ contains $\mu+(1)$.  As $d_1>c_1$, it now follows that $\delta$ contains $\mu+(1)$, which it does not. This establishes our uniqueness clause and completes our proof.
\end{proof}

Equipped with the above lemmas we now prove Lemma~\ref{lem:N}.

\begin{proof}[Proof of Lemma~\ref{lem:N}]
Throughout fix a partition $\alpha = (a_1^m, a_2, a_3,\ldots)$  with $a_1>a_2$ so that $m(\alpha) = m$.  We first prove that
$$M\Q(\mu,\mu+(1)) =  \Q(\mu,\ne\mu).$$
To this end take some $\alpha\in \Q(\mu,\mu+(1))$ and $w\geq 0$ so that $\alpha + (w^m)\in M\Q(\mu,\mu+(1))$.  First, as $\alpha$ contains $\mu$ then $\alpha+(w^m)$ also contains $\mu$.   Next, as $\alpha$ avoids $\mu+(1)$ then $\alpha$ also avoids $\ne\mu$.  By \emph{ii)} it follows that $(a_2,a_3,\ldots)$ avoids $\mu$.  Another application of \emph{ii)}  implies that  $\alpha+(w^m) = ( (a_1+w)^m, a_2,a_3,\ldots)$ avoids $\ne\mu$. We conclude that $M\Q(\mu,\mu+(1))\subseteq Q(\mu,\ne\mu)$.  

The reverse inclusion, as well as our uniqueness claim in this case, follows directly from Lemma~\ref{lem:Q(mu,nemu)}.

\medskip

Next we turn our attention to the proof that 
$$N\Q(\mu,\mu+(1)) =  \Q(\ne\mu,\ne\mu+(1)).$$
By the first part of this proof we see that
\begin{align*}
    N\Q(\mu,\mu+(1)) &=\makeset{\alpha+(1^c)}{\alpha\in M\Q(\mu,\mu+(1)),\ 0<c<m(\alpha)}\\
    &= \makeset{\alpha+(1^c)}{\alpha\in \Q(\mu,\ne\mu),\  0<c<m(\alpha)}.
\end{align*}
By \emph{iii)} observe that if $\alpha$ avoids $\ne\mu$ then $\alpha+(1^c)$ avoids $\ne\mu +(1)$.  

Now assume $\alpha$ contains $\mu$.  By \emph{i)} and the fact that $c<m$ we see that $(a_1^{m-c}, a_2,a_3,\ldots)$
contains $\mu$.  As $c>0$ it follows from \emph{ii)} that 
$$\alpha+(1^c) = ( (a_1+1)^c, a_1^{m-c}, a_2,\ldots)$$
contains $\ne\mu$.  It now follows that $N\Q(\mu,\mu+(1)) \subseteq \Q(\ne\mu, \ne\mu+(1))$.  
 
 To establish the reverse inclusion consider some $\beta\in \Q(\ne\mu,\ne\mu+(1))$ and set $c = m(\beta)$.  Let $\alpha$ be the partition such that 
 $$\beta = \alpha+(1^c).$$  
 As $\beta$ avoids $\ne\mu+(1)$ it follows by \emph{iii)} that $\alpha$ avoids $\ne\mu$.   Additionally, as $\beta$ contains $\ne\mu$ it follows by \emph{ii)} that the result of deleting the top $c$ rows from $\beta$ contain $\mu$.  Hence $\alpha$ contains $\mu$ and so $\alpha\in \Q(\mu,\ne\mu)$.  It remains to show  $c<m(\alpha)$.  Clearly $c\leq m(\alpha)$ so for a contradiction assume we have equality. This means  we can write
 $$\beta = (b_1^c, b_2,\ldots)$$
 where $b_1> 1+ b_2$.  As $\beta$ contains $\ne\mu$ it follows by \emph{i)} and then \emph{ii)} that $((b_2+1)^c,b_2,\ldots)$ contains $\ne\mu$.  As $b_1> b_2+1$ it follows by \emph{iii)} that $\beta$ contains $\ne\mu+(1)$, our desired contradiction.  The reverse inclusion now follows. 

To establish uniqueness in this case, fix $\beta \in N\Q(\mu, \mu +(1))$ and let $\alpha, \eta \in Q(\mu,\mu+(1))$ such that $\beta \in N(\alpha)\cap N(\eta )$.  If $\beta'$ is the partition obtained by deleting the leftmost column of $\beta$, then we see that $\beta' \in M(\alpha)\cap M(\eta )$.  By the uniqueness of the previous case, we conclude that $\alpha = \eta $.  
\end{proof}

\section{Asymptotics of $\Av_n(\mu)$}\label{sec:growth}


In this section we obtain asymptotics for the growth of the sequence $|\Av_1(\mu)|,|\Av_2(\mu)|,|\Av_3(\mu)|,\ldots$ for any strict partition $\mu$. Recalling Theorem~\ref{thm:strict} we remind the reader that no loss in generality results by considering only strict partitions.   

Throughout we use Vinogradov's notation $f \ll_a g $ to mean $f = O_a(g)$, with variables in the subscript indicating possible dependence of the implied constant on those variables.  We need the following facts about the function $\sigma_k(n) = \sum_{d|n} d^k$ (see \cite{HardyWright}):

\begin{align}
    \sum_{m=1}^n \sigma_0(n) &= n \log n + (2\gamma -1)n + O(n^\theta) \label{eq:sigma0}, \\ 
    \sum_{m=1}^n \sigma_1(n) &= \frac{\zeta(2)n^2}{2} + O(n\log n), \label{eq:sigma1}\\
    \sum_{m=1}^n \sigma_k(n) &= \frac{\zeta(k+1)n^{k+1}}{k+1} + O(n^k) \hspace{1cm} (k>1). \label{eq:sigmak}
\end{align}
In \eqref{eq:sigma0} $\gamma$ is the Euler–Mascheroni constant and the value of $\theta$ is the subject of the Dirichlet divisor problem.    At present it is known, due to Huxley \cite{Huxley} that we can take $\theta=131/416$.  We also recall that \begin{equation}
    \sum_{m=1}^n m^a \log^b m = \frac{m^{a+1} \log^b m}{a+1} + O\left(\frac{b}{m+1} m^{a+1}\log^{b-1} m \right). \label{eq:logsum}
\end{equation}
for any $a,b>0$. One can similarly derive that \begin{equation}
    \sum_{m=1}^n \sigma_a(m) \log^b m = \frac{\zeta(a+1)m^{a+1} \log^b m}{a+1} + O\left(\frac{b}{m+1} m^{a+1}\log^{b-1}m \right). \label{eq:sigmalogsum}
\end{equation}

We first consider a few sporadic cases when $\mu$ has small weight which are also listed in Table \ref{tab:my_label} at the end of the paper.
\begin{theorem} \label{thm:sporadic}
We have  
\end{theorem}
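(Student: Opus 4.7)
The plan is to handle Theorem~\ref{thm:sporadic} by a case-by-case analysis of the strict partitions $\mu$ with small weight listed in Table~\ref{tab:my_label}, splitting them according to whether they fall under the super-strict umbrella of Theorem~\ref{thm:rational}, are staircases, or are small ``mixed'' patterns. For each case I would extract the asymptotic from the appropriate machinery rather than invent new tools.

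For the super-strict patterns of small weight (such as $(2),(3),(4),(3,1),(4,1),(4,2),(5,1),(5,2)$), I would first apply the algorithm described at the end of Section~\ref{sec:rational} to obtain $F_\mu(z,1)$ as an explicit rational function $P(z)/Q(z)$. Standard singularity analysis then finishes the job: one locates the dominant poles of $Q$ (all of which lie on the unit circle in these small examples, with $z=1$ typically the unique minimal-modulus pole), reads off the order $k$ of that pole, and computes the Laurent coefficient to obtain $|\Av_n(\mu)|\sim c\, n^{k-1}$ with $c$ an explicit rational constant. Partial fraction decomposition also furnishes the lower-order oscillatory correction terms from any other poles on $|z|=1$.

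For the non-super-strict small cases, the plan is different. The pattern $\mu=(2,1)$ is a staircase, and a direct bijection identifies $\Av_n((2,1))$ with the rectangles of area $n$, so $|\Av_n((2,1))|=\sigma_0(n)$ exactly; this is the $k=1$ line of Theorem~\ref{thm:staircase} and no additional asymptotic argument is needed beyond \eqref{eq:sigma0} if an averaged form is desired. For other non-super-strict small weight patterns (e.g.\ $(3,2)$, $(4,3)$, $(3,2,1)$, $(4,3,2,1)$), I would first give a combinatorial decomposition of $\Av_n(\mu)$ as a union of subsets indexed by, for instance, the rightmost column height and the dimensions of the rectangular block in the Ferrers diagram. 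Each piece is then a convolution of rectangle counts (divisor sums) and counts of partitions with few parts. Applying the summation formulas \eqref{eq:sigma0}--\eqref{eq:sigmalogsum} to each convolution yields the claimed leading term, with the constant $1/(k!(k-1)!\zeta(k))$ or an analogous rational multiple of $\zeta$-values emerging naturally from $\sum_{m\le n}\sigma_a(m)$.

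The main obstacle I anticipate is keeping the error terms clean when a pattern is neither a pure staircase nor super-strict: the convolutions of divisor sums against logarithmic factors require repeated use of partial summation, and the Dirichlet-divisor bound from \eqref{eq:sigma0} (Huxley's exponent $\theta=131/416$) must be threaded carefully through the estimates to avoid inflating the error beyond the $O(1/\log n)$ or $O(\log\log n/\log n)$ relative error promised in the statements of Theorems~\ref{thm:staircase} and~\ref{thm:not staircase}. Once each small case has been placed inside the correct framework (rational-GF singularity analysis, exact $\sigma_0$ identity, or divisor-convolution asymptotic), the theorem follows by collating the individual estimates into the table.
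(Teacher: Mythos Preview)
Your proposal misidentifies the scope of Theorem~\ref{thm:sporadic}. The theorem is not a summary of the entire Table~\ref{tab:my_label}; its content (placed just after the \texttt{theorem} environment in the paper's source) consists only of the six identities
\[
|\Av_n(1)|=0,\quad |\Av_n(2)|=1,\quad |\Av_n(2,1)|=\sigma_0(n),\quad |\Av_n(3)|=\lfloor n/2\rfloor,\quad |\Av_n(3,1)|=n,
\]
and an asymptotic for $|\Av_n(3,2)|$. The paper proves all six by one-line direct counts: for $(3)$ and $(3,1)$ one simply observes that the avoiding partitions have at most two columns, respectively are a rectangle plus one shorter column, and reads off $\lfloor n/2\rfloor$ and $n$ by the division algorithm; for $(3,2)$ the avoiding partitions are exactly $\{(a^b,1^c):ab+c=n\}$, and summing $\sigma_0(m)-1$ over $m\le n$ with \eqref{eq:sigma0} gives the stated formula. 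No generating functions, no singularity analysis, no convolution machinery.

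Your plan to compute $F_\mu(z,1)$ via Section~\ref{sec:rational} and then extract asymptotics by locating dominant poles would of course recover the super-strict cases $(2),(3),(3,1)$, but it is vastly heavier than needed and, more importantly, it reverses the logical order of the paper: Theorem~\ref{thm:sporadic} is deliberately elementary because its six cases serve as the base cases for the inductions in Theorems~\ref{thm:nonotch} and~\ref{thm:not staircase}. Any appeal to ``the error terms promised in Theorems~\ref{thm:staircase} and~\ref{thm:not staircase}'' in proving Theorem~\ref{thm:sporadic} would therefore be circular. The patterns you list beyond the six---$(4),(4,1),(4,2),(5,1),(5,2),(4,3),(3,2,1),(4,3,2,1)$---are handled later in the paper by those general theorems, not here.
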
 
\vspace{-8mm}  
\begin{align*}
    |\Av_n \big((1)\big)| = 0,  \hspace{1cm} & \hspace{1cm} |\Av_n \big((2)\big)| = 1, \\
    |\Av_n \big((2,1)\big)| = \sigma_0(n), \hspace{0.35cm} & \hspace{1cm}
    |\Av_n \big((3)\big)| = \left \lfloor \frac{n}{2}\right\rfloor = \frac{n}{2} + O(1), \\
    |\Av_n \big((3,1)\big)| = n, \hspace{1cm} & \hspace{1cm} 
    |\Av_n \big((3,2)\big)| = n\log n + (2\gamma -2)n + O\left(n^{131/416}\right).
\end{align*}
\begin{proof}
Clearly every partition contains $(1)$.  When $\mu = (2)$ we find that $\Av_n(\mu)=\makeset{(1^n)}{n\geq 1}$ and so $|\Av_n(\mu)| = 1$.  The Ferrers board of a partition avoiding $(2,1)$ must be a rectangle, and so we obtain one partition for each divisor of $n$, thus $|\Av_n \big((2,1)\big)| = \sigma_0(n)$.

The partitions avoiding $(3)$ are precisely those with at most two columns and hence $$|\Av_n\big((3)\big)| = \left \lfloor \frac{n}{2}\right\rfloor = \frac{n}{2} + O(1).$$  
The partitions in $\Av_n((3,1))$ are precisely those whose Ferrers board consisting of $x$ columns of height $y$, for some $x,y>0$ together a single column of height $0\leq r< y$.  So  $n= xy + r$.  By the division algorithm we get one such decomposition of $n$ for each $0<y\leq n$.  Hence  $$|\Av_n\big((3,1)\big)| = n.$$  

Lastly the partitions avoiding $(3,2)$ of weight $n$ are those of the form $\makeset{(a^b,1^c)}{a\cdot b+c =n}$.  In other words, these are partitions obtained by taking a rectangle of weight $m:=a\cdot b$  and then extending the first column by $c = n-m$.  Counting these we have
$$|\Av_n\big((3,2)\big)|=  1+ \sum_{m=1}^n (\sigma_0(m) - 1) = n\log n + (2\gamma -2)n + O(n^{131/416}),$$
where we subtract 1 in the second step to avoid over counting the partition $(1^n)$ and the second equality is obtained by (\ref{eq:sigma0}).
\end{proof} 

We now introduce a definition that will be used extensively throughout the remainder of this section.

\begin{definition}
Let $\alpha$ be a partition.  Then its Ferrers board is obtained by horizontally concatenating $k>0$ rectangles of widths $x_1,\ldots, x_k>0$ and strictly decreasing heights $y_1> y_2> \cdots> y_k>0$ so that
$$|\alpha|=x_1y_1 + \cdots +x_ky_k.$$
We call $k$ the number of \emph{distinct magnitudes} of $\alpha$ and the pair of height and width sequences the \emph{rectangular decomposition} of $\alpha$. 
\end{definition}

\begin{example}
The partition 
$$\ydiagram[*(gray)]{2,2,2,2,2,2,2}*{2+3,2+3,2+3,2+3,2+3,}*[*(gray)]{5+1, 5+1,5+1}$$ 
has the unique rectangular decomposition of $2\times 7 + 3\times 5 + 1\times 3$.  
\end{example}

A few notes about this definition before continuing.  As we insist that the sequence of heights is strictly decreasing it follows that a rectangular decomposition is unique.  If we write a partition as $\alpha = (a_1^{e_1}, \ldots, a_k^{e_k})$ where $a_1>a_2>\cdots >a_k>0$ and $e_i>0$ then traditionally $a_1,a_2,\ldots, a_k$ are referred to as the $k$-distinct magnitudes of $\alpha$.  An easy check confirms that the number of distinct magnitudes in our definition above is the same as the number of distinct magnitudes in this traditional sense.  Lastly, although a rectangular decomposition is defined to be a pair of sequences we abuse notation and refer to a sum of the form displayed above as a rectangular decomposition.  In this case the $x_i$'s and $y_i$'s shall always denote widths and heights, respectively.

Next observe that if $\mu$ is strict then no partition in $\Av(\mu)$ may have more than $\mu_1-1$ distinct magnitudes.  This follows for if some $\alpha\in \Av(\mu)$ had at least $\mu_1$ distinct magnitudes, then $\alpha$ would contain the staircase $(\mu_1,\mu_1-1,\ldots, 1)$.  As $\mu$ has distinct parts, it follows that $\alpha$ would contain $\mu$.  This motivates the next definition.  

\begin{definition}
For a strict partition $\mu$ and $n>0$ we denote by $D_n(\mu)$ the set of partitions in $\Av_n(\mu)$ having exactly $\mu_1-1$ distinct parts.  For completeness we set $D_0(\mu) = \emptyset$.  We also define $D(\mu) = \cup_{n=0}^\infty D_n(\mu)$.  
\end{definition}

One of the key ideas going forward is to partition the set $\Av(\mu)$ into those partitions with exactly $\mu_1-1$ distinct magnitudes and those with fewer distinct magnitudes.  We will see that the contribution from partitions in $\Av_n(\mu)\setminus D_n(\mu)$ is asymptotically negligible.  

The next lemma shows that the rectangular decomposition of partitions of $D(\mu)$ has a nice characterization.  As our proof uses Lemma~\ref{lem:list of prop} the reader is encouraged to review its statement.  Further as this lemma contains several parts we shall, for example, simply write ``by \emph{ii)}'' instead of the more verbose ``by part \emph{ii)} of Lemma~\ref{lem:list of prop}'' each time.   

\begin{lemma}\label{lem:rd of mu}
Let $\mu$ be a strict partition with $\mu_1\geq 2$.  Then the set $D(\mu)$ consists precisely of those partitions $\alpha$ whose rectangular decomposition $$x_1y_1+\cdots + x_{\mu_1-1}y_{\mu_1-1}$$
satisfies the restriction $x_i=1$ whenever $\mu$ does not have a part of size $i$.
\end{lemma}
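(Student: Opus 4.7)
My plan is to prove the two directions separately: the direction asserting that the width-constraint implies $\alpha\in\Av(\mu)$, by induction on $\mu_1$; and the converse, by producing an explicit containment witness. Throughout I use the elementary characterization that, for strict $\mu$, $\alpha$ contains $\mu$ if and only if one can find row indices $r_1<\cdots<r_\ell$ of $\alpha$ with $\alpha_{r_i}-\alpha_{r_{i+1}}\geq\mu_i-\mu_{i+1}$ for $i<\ell$ and $\alpha_{r_\ell}\geq\mu_\ell$. For $\alpha$ with rectangular decomposition $x_1y_1+\cdots+x_{\mu_1-1}y_{\mu_1-1}$, its distinct row-lengths are the partial sums $s_j:=x_1+\cdots+x_j$, so the problem becomes finding cuts $\mu_1-1\geq j_1>\cdots>j_\ell\geq 1$ with $s_{j_r}-s_{j_{r+1}}\geq\mu_r-\mu_{r+1}$ and $s_{j_\ell}\geq\mu_\ell$ (using $s_0=0$, $\mu_{\ell+1}=0$).

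For the avoidance direction, the base case $\mu_1=2$ (where $\mu$ is $(2)$ or $(2,1)$) is a direct check. For the inductive step I split on whether $\mu_1\geq\mu_2+2$ or $\mu_1=\mu_2+1$. When $\mu_1\geq\mu_2+2$, the index $\mu_1-1$ is not a part of $\mu$, so the width-constraint forces $x_{\mu_1-1}=1$; thus $\alpha=\alpha^\star+(1^{y_{\mu_1-1}})$, where $\alpha^\star$ has rectangular decomposition $x_1y_1+\cdots+x_{\mu_1-2}y_{\mu_1-2}$. Setting $\mu^\star=(\mu_1-1,\mu_2,\ldots,\mu_\ell)$ (strict with $\mu^\star_1<\mu_1$), the constraint on $\alpha$ transfers verbatim to $(\alpha^\star,\mu^\star)$; by induction $\alpha^\star\in\Av(\mu^\star)$, and Lemma~\ref{lem:list of prop}\emph{iii)} with $c=y_{\mu_1-1}\leq m(\alpha^\star)$ upgrades this to $\alpha\in\Av(\mu^\star+(1))=\Av(\mu)$. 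When $\mu_1=\mu_2+1$, one has $\mu=\ne{\mu^\star}$ for $\mu^\star=(\mu_2,\ldots,\mu_\ell)$, and $\alpha$ is of the prepended-rectangle form $(\alpha'')^+$ of Lemma~\ref{lem:list of prop}\emph{ii)}, where $\alpha''$ arises from $\alpha$ by removing its top $y_{\mu_1-1}$ rows; the constraint transfers to $(\alpha'',\mu^\star)$ (the dropped part $\mu_2=\mu_1-1$ indexes no rectangle of $\alpha''$), and Lemma~\ref{lem:list of prop}\emph{ii)} with induction concludes.

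For the containment direction, suppose $x_t\geq 2$ for some $t\in\{1,\ldots,\mu_1-1\}$ not a part of $\mu$. Since adding columns to a rectangle or adjoining rows to a rectangle only enlarges $\alpha$ and preserves containment of $\mu$, I may reduce to the minimal case $x_t=2$, $x_s=1$ for $s\neq t$, and $y_i=\mu_1-i$. The hypothesis that $t$ is not a part of $\mu$ pins down a unique $r^*\in\{1,\ldots,\ell\}$ with $\mu_{r^*+1}<t<\mu_{r^*}$ (using $\mu_{\ell+1}=0$ and $\mu_1$ as sentinels). I then take the cut sequence $j_r=\mu_r-1$ for $r\leq r^*$ and $j_r=\mu_r$ for $r>r^*$; a direct check confirms $\mu_1-1=j_1>\cdots>j_\ell\geq 1$, that each block sum $s_{j_r}-s_{j_{r+1}}$ equals exactly $\mu_r-\mu_{r+1}$ (the column at index $t$ falls into block $r^*$ and makes up its unit size deficit), and that $s_{j_\ell}\geq\mu_\ell$, yielding the desired containment witness.

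The main technical obstacle is the bookkeeping in the containment direction: one must verify $j_\ell\geq 1$ in each sub-case (in the case $r^*=\ell$ this requires $\mu_\ell\geq 2$, which is forced by the existence of a $t<\mu_\ell$), and confirm that the extra column at $t$ really does lie in block $r^*$ rather than a neighboring one. The avoidance direction is more conceptual, and the only subtlety is that the constraint ``$x_i=1$ outside the parts of $\mu$'' tracks correctly through both reductions; in the sub-case $\mu_1=\mu_2+1$ this works because the dropped part $\mu_1-1$ is no longer indexable among the $\mu_1-2$ rectangles of $\alpha''$.
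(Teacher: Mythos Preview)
Your proof is correct, and both directions go through as you outline. The cut sequence $j_r=\mu_r-1$ for $r\leq r^*$ and $j_r=\mu_r$ for $r>r^*$ indeed gives $s_{j_r}=\mu_r$ in the minimal case (and in fact the same sequence works for arbitrary $x_i\geq 1$ with $x_t\geq 2$, so the reduction to the minimal $\alpha$ is not even needed).

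The route differs from the paper's in two respects. For the avoidance direction, both arguments split on whether $\mu_1-\mu_2\geq 2$ or $\mu_1=\mu_2+1$, but the paper inducts on $|\mu|$ rather than on $\mu_1$, and in the case $\mu_1=\mu_2+1$ it reduces to $\tau=(\mu_1,\mu_3,\mu_4,\ldots)$ (deleting the part $\mu_2$) rather than to your $\mu^\star=(\mu_2,\ldots,\mu_\ell)$ via $\ne{\,}^{-1}$; both reductions are legitimate. The more substantive difference is in the containment direction: the paper proves $D(\mu)\subseteq R(\mu)$ by the same induction, again using parts \emph{i)}--\emph{iii)} of Lemma~\ref{lem:list of prop} to peel off a row or column and land in $R(\tau)=D(\tau)$, whereas you exhibit an explicit copy of $\mu$ inside any $\alpha$ violating the width constraint. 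Your approach is more constructive and arguably more transparent about \emph{why} a wide rectangle at a non-part index forces containment; the paper's approach is more uniform (both inclusions by one induction) and avoids invoking the row-difference characterization of containment, which you use as a black box.
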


\begin{proof}
	We prove this by induction on $|\mu|$.  As $\mu_1\geq 2$ our base case is when $\mu= (2)$.  The set $\Av\big((2)\big)$ consists of all partitions of the form $(1^e)$ for $e>0$ which are precisely the partitions whose rectangular decomposition is $1\cdot y_1$ for some $y_1>0$ as claimed.
	
Now consider a strict $\mu$ with $|\mu|>2$. Denote by $R(\mu)$ the collection of all partitions whose rectangular decomposition is as in the statement of the lemma.   We consider two cases depending on the size of $\mu_1-\mu_2$. 

\medskip\noindent 
\textbf{Case 1:} $\mu_1-\mu_2>1$
\medskip

Set $\tau = (\mu_1-1, \mu_2,\mu_3,\ldots)$ so that $\mu = \tau +(1)$.  By induction $R(\tau) = D(\tau)$. Consider some $\beta \in R(\mu)$ with the aim of showing that $\beta\in D(\mu)$.  By definition $\beta$'s rectangular decomposition is 
$$|\beta|=x_1y_1 + \cdots + x_{\mu_1-2}y_{\mu_1-2}+ 1\cdot y_{\mu_1-1},$$
where $y_1>y_2>\cdots > y_{\mu_1-1}$ and $x_i=1$ whenever $\mu$ does not have the part of size $i$.  Let $\alpha$ be the partition whose rectangular decomposition is 
$$|\alpha|=x_1y_1 + \cdots + x_{\mu_1-2}y_{\mu_1-2}.$$
Clearly $\alpha \in R(\tau) = D(\tau)$.  So $\alpha$ avoids $\tau$ and has $\mu_1-2$ distinct magnitudes.  As $\beta = \alpha + (1^c)$ for $c = y_{\mu_1-1}$ we know by  \emph{iii)} that $\beta$ avoids $\mu = \tau +(1)$.  As $\beta$ has $\mu_1-1$ distinct magnitudes then $\beta \in D(\mu)$ and we conclude that $R(\mu)\subseteq D(\mu)$.  

For the reverse inclusion consider $\beta\in D(\mu)$ and write it as 
$\beta = \alpha + (1^c)$
where $c\leq$ the length of the rightmost column in $\alpha$.  As $\beta$ avoids $\mu$,  \emph{iii)} implies that $\alpha$ avoids $\tau$.  As $\beta$ has $\mu_1-1$ distinct magnitudes it follows that $\alpha$ must have either $\mu_1-1$ or $\mu_1-2$ distinct magnitudes.  If $\alpha$ had $\mu_1-1$ distinct magnitudes, then it would contain the staircase $(\mu_1-1, \mu_1-2,\ldots, 1)$ in which case it would contain the strict partition $\tau$.  So $\alpha$ has $\mu_1-2 = \tau_1-1$ distinct magnitudes and hence $\alpha \in D(\tau)=R(\tau)$.  
So $\alpha$ has the rectangular decomposition
$$|\alpha| = x_1y_1 + \cdots + x_{\mu_1-2}y_{\mu_1-2},$$
where $x_i = 1$ whenever $\tau$ does not have a part of size $i$.  Consequently the rectangular decomposition for $\beta$ is
$$|\beta| = x_1y_1 + \cdots + x_{\mu_1-2}y_{\mu_1-2}+ 1\cdot c,$$
with $c<y_{\mu_1-2}$ as $\beta$ has exactly one more distinct magnitude than $\alpha$.  Hence $\beta \in R(\mu)$ proving that $D(\mu)\subseteq R(\mu)$.  

\medskip\noindent 
\textbf{Case 2:} $\mu_1-\mu_2 = 1$ 
\medskip

Let $\tau = (\mu_1, \mu_3,\mu_4,\ldots)$, i.e., the partition obtained be removing $\mu_2$ from $\mu$.    As $\mu_1-\mu_2 =1$ and $\mu_1\geq 2$ then $\mu_2>0$.  So $|\tau|< |\mu|$, and hence by induction, $R(\tau) = D(\tau)$.  Now consider some $\beta\in R(\mu)$ with the aim of showing that $\beta\in D(\mu)$.  By definition $\beta$'s rectangular decomposition is 
$$|\beta|=x_1y_1 + \cdots + x_{\mu_1-2}y_{\mu_1-2}+ x_{\mu_1-1} y_{\mu_1-1}$$
where $y_1>y_2>\cdots > y_{\mu_1-1}$ and $x_i=1$ whenever $\mu$ does not have the part of size $i$.  As $\beta$ has $\mu_1-1$ distinct parts it suffices to show that $\beta$ avoids $\mu$. 

For a contradiction assume $\beta$ contains $\mu$. Define $\eta $ to be the partition obtained by deleting the top $y_{\mu_1-1}$ rows from $\beta$.   Now if $\beta$ contains $\mu$ then it follows by \emph{ii)} and the fact that $\mu_1 = \mu_2+1$ that $\eta $ contains $(\mu_2,\mu_3,\ldots)$.  Next define $\alpha$ to be the partition whose rectangular decomposition is 
$$|\alpha|=x_1y_1 + \cdots + x_{\mu_1-2}y_{\mu_1-2} + 1\cdot y_{\mu_1-1}.$$
It follows that $\alpha \in R(\tau)=D(\tau)$.   Observe that $\alpha$ is obtained by adding $y_{\mu_1-1}$ rows of length $\eta _1+1$ to $\eta $.  As $\eta $ contains $(\mu_2,\mu_3,\ldots)$ it follows by \emph{ii)} that $\alpha$ contains $\mu=(\mu_2+1,\mu_2, \mu_3,\ldots)$ which, in turn, implies that $\alpha$ contains $\tau$, a contradiction.  We conclude that $R(\mu) \subseteq D(\mu)$.

To obtain the reverse inclusion take some $\alpha\in D(\mu)$.  Setting $k = \mu_1-1 = \mu_2$ write
$$\alpha = (a_1^{e_1}, a_2^{e_2},a_3^{e_3}, \ldots, a_{k}^{e_{k}})$$ 
for some $a_1>a_2>\cdots $ and $e_i>0$.  Now define
$$\eta  = (a_2^{e_1+e_2},a_3^{e_3},\ldots, a_k^{e_k})\quad\text{and}\quad  \beta = \eta  +(1^{e_1}).$$
As $\alpha$ avoids $\mu$ it follows from \emph{ii)} and then \emph{i)} that $\eta $ avoids $(\mu_2,\mu_3,\ldots)$.  By \emph{iii)} we further see that $\beta$ avoids $\tau = (\mu_2+1,\mu_3,\ldots)$.  We conclude that $\beta\in D(\tau) = R(\tau)$.  A straightforward check now shows that as $\beta\in R(\tau)$ then $\alpha\in R(\mu)$.  This proves the reverse inclusion.  
\end{proof}

In light of this lemma we make the following definition.  

\begin{definition}
Take $\mu$ as in the statement of the previous lemma and fix $\alpha\in D(\mu)$.  Consider the rectangular decomposition of $\alpha$.  If $\mu$ has no part of size $i$ then we say the $i$th rectangle in this decomposition is \emph{thin}.  If $\mu$ has a part of size $i$ then we say the $i$th rectangle is \emph{wide}. Thus thin rectangles are precisely those rectangles whose width is forced to be 1.  
\end{definition}

We conclude this section with an example illustrating these ideas.  

\begin{example}
Consider the pattern $\mu = (4,3,1)$.  This partition is strict, with $\mu_1 = 4$, and so partitions in $D(\mu)$ will have 3 distinct part sizes.  Furthermore, since $\mu$ does not have a part of size 2, the second rectangle in the rectangular decomposition of the partitions in $D(\mu)$ is thin.  Thus partitions in  $D((4,3,1))$ look like
\begin{center}
\begin{tikzpicture}
    \draw[-](0,0) -- (1,0) -- (1,0.5) -- (1.25, 0.5) -- (1.25,1) -- (2.25,1) -- (2.25, 2) -- (0,2) -- (0,0);
    \draw[dotted] (1,0) --(1,2);
    \draw[dotted] (1.25,.5) --(1.25,2);
\end{tikzpicture}
\end{center}
and their rectangular decomposition is of the form $x_1y_1 + 1y_2 + x_3y_3$ with $y_1>y_2>y_3>0$.  Here the first and third rectangles are wide while the second rectangle is  thin.  
\end{example}





\subsection{Avoiding staircase partitions}\label{ssec:staircases}

The goal of this subsection is to give an asymptotic formula for the count $|\Av_n(\mu)|$ in the special case where  $\mu=(k+1,k,k-1,\ldots, 2,1)$.  This special case is closely related to work that has already been considered in the literature.  Notice that if $\alpha \in \Av_n(\mu)$ but not in $D_n(\mu)$, i.e., it has fewer than $k$ distinct magnitudes, then it avoids $(k,k-1,\ldots, 1)$ thus we have
\begin{equation}
    \Av_n(\mu) = D_n(\mu) \cup \Av_n\big((k,k-1,\ldots 1)\big). \label{eqn:stairdecomp}
\end{equation}

By Lemma~\ref{lem:rd of mu} partitions in $D_n(\mu)$ correspond to representations of 
$n = \sum_{i=1}^k x_iy_i $ 
with $x_i>0$ for all $i$ and  $y_1>y_2>\cdots >y_k>0$.  For example if $k = 3$ we get all partitions of the form

\begin{center}
\begin{tikzpicture}
    \draw (0,0) rectangle (2,3);
    \draw (2,1) rectangle (3,3);
    \draw (3,2) rectangle (4,3);
\end{tikzpicture}\ .
\end{center}
where the width of these three rectangles are arbitrary and their lengths, from left to right, are $y_1>y_2>y_3>0$. In general, partitions of this sort are those whose parts have $k$ distinct magnitudes.  Such partitions have been studied by many authors going back to MacMahon \cite{MacMahon}.  The count of such representations was considered by Andrews \cite{AndrewsSLB} who uses results of Ingham \cite{Ingham}, Estermann \cites{Estermann1,Estermann2} and Johnson \cite{Johnson} on counting the representations of $n$ as a sum of $k$ products of pairs of positive integers.  Their results give estimates for the number $\nu_k(n)$ of representations of $n = \sum_{i=1}^k x_iy_i $ 
where $x_i,y_i>0$ but without any restrictions on the values $y_i$.    In particular they show the following

\begin{numcases}{\nu_k(n) =}
\sigma_0(n) & $k=1$ \label{eq:nk0}\\
\frac{1}{\zeta(2)}\sigma_{1}(n)\log^2 n-\frac{4}{\zeta(2)} n \log n \sigma_{\hspace{-0.7mm}\shortminus 1} '(n) + O\left(\sigma_1(n) \log n\right)\label{eq:nk2-precise} \\ 
\hspace{2.3cm} = \frac{1}{\zeta(2)}\sigma_{1}(n)\log^2 n\left(1+O\left(\frac{\log \log n}{\log n}\right)\right) & $k=2$ \label{eq:nk2} \\
\frac{1}{(k-1)!\zeta(k)}\sigma_{k-1}(n)\log^k n\left(1+O\left(\frac{1}{\log n}\right)\right) & $k\geq 3$ \label{eq:nk3} \ .
\end{numcases}

Observe that when $k=2$ the relative error term $O\left(\frac{1}{\log n}\right)$ in the $k\geq 3$ case must be replaced by $O\left(\frac{\log \log n}{\log n}\right)$ if only the main term is used. In the second term the function $\sigma_{\hspace{-0.7mm}\shortminus 1} '(n) = \left. \sum_{d|n}\frac{\log d}{d} = \frac{d}{ds} \sigma_s(n)\right|_{s=-1}$.  We caution the reader that this result for $k=2$ was stated incorrectly without this error term in the introduction of \cite{Johnson}.  However,  it is derived with this term by Ingham \cite{Ingham} and a more precise result, including further lower order terms is obtained in \cite{Estermann1}.

Going forward, it will be useful to recall bounds for the size of the function $\sigma_j(n)$. For $j>1$ we have \[n^j \leq \sigma_j(n) = n^j \sum_{d|n} \frac{1}{d^j} \leq n^j \zeta(j).\]  In particular, $\sigma_j(n)$ is bounded above and below by a constant times $n^j$.  When $j=1$, the function is not quite so well behaved, however we have the bounds $n \leq \sigma_1(n) \ll n \log \log n.$ Finally, when $j=0$ we have the divisor function, which is much less well behaved, however we can still bound it as $2 \leq \sigma_0(n) = o(n^\epsilon)$ for every $\epsilon>0$.  While $\sigma_0(n)$ is poorly behaved, it behaves well on average, as seen in \eqref{eq:sigma0}.

We now count partitions avoiding a staircase partition $\mu=(k+1,k,k-1,\ldots, 2,1)$ for any $k\geq 1$.

\begin{theorem}\label{thm:staircase}
Fix $k\geq 1$ and let $\mu=(k+1,k,k-1,\ldots, 2,1)$.  Then $|\Av_n(\mu)| = \frac{\nu_k(n)}{k!}\left(1+O\left(\frac{1}{\log n}\right)\right)$ and 
\begin{equation}|\Av_n(\mu)| =\begin{cases}
\sigma_0(n) & k=1  \\
\frac{1}{2\zeta(2)}\sigma_{1}(n)\log^2 n\left(1+O\left(\frac{\log \log n}{\log n}\right)\right) & k=2 \\
\frac{1}{k!(k-1)!\zeta(k)}\sigma_{k-1}(n)\log^k n\left(1+O\left(\frac{1}{\log n}\right)\right) & k\geq 3  \ .
\end{cases} \label{eq:stairasymp} \end{equation}

\end{theorem}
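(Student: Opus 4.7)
The plan is to exploit the decomposition~\eqref{eqn:stairdecomp}, $\Av_n(\mu) = D_n(\mu) \sqcup \Av_n\big((k,k-1,\ldots,1)\big)$, and to show that the dominant contribution is $|D_n(\mu)|$, which is essentially $\nu_k(n)/k!$. The case $k=1$ is immediate from Theorem~\ref{thm:sporadic}, so assume $k \geq 2$. Since the staircase $\mu = (k+1, k, \ldots, 1)$ has a part of every size from $1$ through $k+1$, Lemma~\ref{lem:rd of mu} identifies $D(\mu)$ with the set of partitions admitting a rectangular decomposition $x_1y_1+\cdots+x_ky_k$ with $x_i \geq 1$ and $y_1 > y_2 > \cdots > y_k > 0$; in particular no rectangle is thin.

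The function $\nu_k(n)$ enumerates the same data but without the ordering constraint on the $y_i$, so $k!\,|D_n(\mu)| = \nu_k(n) - E_n$, where $E_n$ is the number of ordered representations with some coincidence $y_i = y_j$. A union bound yields $E_n \leq \binom{k}{2} R_n$ with $R_n$ the count when $y_1 = y_2$. Writing $s := y_1(x_1 + x_2)$, a direct enumeration of $(y_1, x_1, x_2)$ with fixed $s$ gives $\sigma_1(s) - \sigma_0(s)$ such triples, so
$$R_n \leq \sum_{s=2}^{n} \sigma_1(s)\,\nu_{k-2}(n-s),$$
with the convention $\nu_0(0) = 1$ and $\nu_0(m) = 0$ otherwise. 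For $k=2$ only $s = n$ contributes and $R_n \leq \sigma_1(n)$. For $k \geq 3$ we combine the uniform bound $\sigma_1(s) \ll n\log\log n$ (for $s \leq n$) with the average estimate $\sum_{m \leq n}\nu_{k-2}(m) \ll n^{k-2}\log^{k-2} n$, which follows from~\eqref{eq:sigmalogsum},~\eqref{eq:sigma0}, and the asymptotics~\eqref{eq:nk2}--\eqref{eq:nk3}. This gives $R_n \ll n^{k-1}\log^{k-2} n \log\log n$, and comparing with~\eqref{eq:nk3} shows $E_n = O(\nu_k(n)/\log n)$.

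The remaining summand $|\Av_n\big((k,k-1,\ldots,1)\big)|$ is handled by applying the same decomposition inductively on $k$: it equals $\sum_{j=0}^{k-1}|D_n((j+1,j,\ldots,1))| \ll \nu_{k-1}(n)$, and from~\eqref{eq:nk2}/\eqref{eq:nk3} one has $\nu_{k-1}(n)/\nu_k(n) \asymp 1/(n \log n)$, which is $O(1/\log n)$. Together these establish $|\Av_n(\mu)| = \nu_k(n)/k!\cdot(1 + O(1/\log n))$, and substituting~\eqref{eq:nk0}, \eqref{eq:nk2}, and~\eqref{eq:nk3} yields the three displayed asymptotics; for $k=2$ the $O(1/\log^2 n)$ error from reducing to $\nu_2(n)/2$ is absorbed by the larger $\log\log n/\log n$ factor already present in~\eqref{eq:nk2}. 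The main obstacle is the convolution bound on $R_n$: the crude pointwise bound on $\sigma_1$ paired with an average estimate on $\nu_{k-2}$ turns out to just suffice to gain a full $\log n$ factor over $\nu_k(n)$, and any weakening of either input would leave only the inferior $\log\log n/\log n$ relative error.
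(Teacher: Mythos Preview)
Your proof is correct and follows essentially the same strategy as the paper: decompose via~\eqref{eqn:stairdecomp}, identify $D_n(\mu)$ with strictly-ordered representations, bound the representations with coincident heights by a convolution of $\sigma_1$ against $\nu_{k-2}$, and divide by $k!$. The only notable technical difference is in handling that convolution for $k\geq 4$: the paper appeals to the bound $\sum_{m\leq n}\sigma_a(n-m)\sigma_b(m)\ll_{a,b} n^{a+b+1}$ of Halberstam to get $R_n\ll n^{k-1}\log^{k-2}n$, whereas you pull out $\sigma_1(s)\ll n\log\log n$ pointwise and sum $\nu_{k-2}$, which costs an extra $\log\log n$ but still comfortably gives $E_n/\nu_k(n)=O(1/\log n)$. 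Two minor quibbles: your ``$\nu_{k-1}(n)/\nu_k(n)\asymp 1/(n\log n)$'' should be $\ll$ rather than $\asymp$ (the $\sigma$-functions fluctuate), and your final paragraph's remark about ``any weakening of either input'' is not quite accurate since the paper's sharper convolution bound shows there is slack.
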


Note that the only difference between the $k=2$ and $k\geq 3$ case in the statement of the theorem is the $\log \log n$ term in the numerator of the error term.

\begin{proof}
When $k=1$, $\mu = (2,1)$ and, as observed in Theorem \ref{thm:sporadic}, the set $\Av_n(\mu)$ consists of partitions with one distinct part or, equivalently, partitions whose Ferrers board is a rectangle so $|\Av_n(\mu)|=\nu_1(n)=\sigma_0(n)$, the number of divisors of $n$.

For $k\geq 2$ we prove this by induction on $k$.  Let $k=2$ and $\mu=(3,2,1)$. As in \eqref{eqn:stairdecomp} we have ${\Av_n(\mu)=D_n(\mu) \cup \Av_n\big((2,1)\big)}$. Since $|\Av_n((2,1))|=\sigma_0(n)=o(n)$ it suffices to count elements of  $D_n(\mu)$.

By Lemma \ref{lem:rd of mu} every partition in $D_n(\mu)$ can be described as a representation of $n=x_1y_1+x_2y_2$ with $x_i>0$ and $y_1>y_2$. Equation \eqref{eq:nk2} gives an expression for $\nu_2(n)$, the number of such representations of $n$ without the restriction $y_1>y_2$. 

To count elements of $D_n(\mu)$ however we must exclude from this count those representations where $y_1=y_2$, which would mean that $n=y_1(x_1+x_2)$.  Note that each such representation of $n$ is obtained uniquely from a factorization of $n$ as $n=de$ by taking $y_1=d$ and $x_1+x_2=e$.   For a fixed value of $e$ there are $e-1$ choices for $x_1$ and $x_2$, so the total number of such representations of $n$ is $\displaystyle{\sum_{e|n} (e-1)} = \sigma_1(n)-\sigma_0(n)$.  Finally, each representation with $y_1 \neq y_2$, occurs twice, since heights $y_i$ can occur permuted in either order, thus using \eqref{eq:nk2} and the fact that $\sigma_1(n) - \sigma_0(n) = O(n\log\log n)$ we have
\begin{align*}
\left|D_n\big((3,2,1)\big)\right| &= \tfrac{1}{2}\big(\nu_2(n)-\sigma_1(n)+\sigma_0(n)\big)\\
&= \frac{1}{2\zeta(2)}\sigma_1(n)\log^2 n + O\left(\sigma_1(n)\log n \log \log n \right)\\
&= \frac{1}{2\zeta(2)}\sigma_1(n)\log^2 n \left(1+O\left(\frac{\log \log n}{\log n}\right)\right). \nonumber
\end{align*}

Now suppose $k>2$ and that the result holds for all smaller values of $k$.  Again we write
$$\Av_n(\mu) = D_n(\mu) \cup \Av_n\big((k,k-1,\ldots 1)\big).$$
By our induction hypothesis, the size of $\Av_n\big((k,k-1,\ldots, 1)\big)$ is smaller than the error term in our desired result.  Therefore we need only count the partitions in $D_n(\mu)$.  By Lemma \ref{lem:rd of mu} the partitions in this set are precisely the set of partitions that admit the rectangular representations $n=x_1y_1+ \cdots + x_ky_k$ where $y_1> \cdots >y_k>0$.   Using \eqref{eq:nk3} to count the total number of such representations without restrictions on the order or inequality of the $y_i$.  

We must exclude from this count any representations having two (or more) equal $y_i$ values.  We obtain an upper bound for the total number of such representations in a manner similar to the observation used in the base case.  We can produce a representation of $n=x_1y_1+ \cdots + x_ky_k$ which has at least two equal $y_i$ values using the following construction.  Fix $2\leq m <n$, and take a representation of $n-m=x_1y_1+ \cdots + x_{k-2}y_{k-2}$ as a sum of $k-2$ products.  This can be done in $\nu_{k-2}(n-m)$ ways.  

Now, add to this representation two additional terms $x_{k-1}y_{k-1}$ and $x_{k}y_{k}$ where $y_{k-1}=y_k$, of total size $m=y_k(x_{k-1}+x_k)$. As in the base case there are $\sigma_1(m)-\sigma_0(m)$ ways to choose the values of $y_k,x_k$ and $x_{k-1}$.  This produces a representation of $n$ as a sum of $k$ products in which the heights in the last two terms are equal.

Finally, we note that every representation of $n$ having at least two equal $y_i$ values can be constructed in this manner by varying $m$ and then permuting the positions of the two equal $y_i$ terms produced by this method among the $k$ indices.  There are $\binom{k}{2}$ choices for where these terms could be inserted into the representation of $n$. 

This method overcounts those representations with more than two equal terms. As we seek only an upper bound, the following estimates which bound the total number of such representations having at least two equal $y_i$ terms are sufficient for our purposes:
\begin{align}
    \binom{k}{2}\sum_{m=1}^{n}\nu_{k-2}(n-m)(\sigma_1(m)-\sigma_0(m)) &\ll_k \sum_{m=1}^{n}\nu_{k-2}(n-m)\sigma_1(m) \nonumber \\
    &\ll_k 
    \begin{cases}  
    \sum\limits_{m=1}^{n} \sigma_{0}(n-m)\sigma_1(m) & k=3
    \\
    \sum\limits_{m=1}^{n} \sigma_{k-3}(n-m)\log^{k-2}(n-m)\sigma_1(m) & k\geq 4.
    \end{cases}
\label{eq:convolution}
\end{align} 

If $k=3$ the sum in \eqref{eq:convolution} is at most $n^2 \log n \log \log n$, obtained using the bound $\sigma_1(m)\ll m \log \log m$ along with the fact that $\sum_{j=1}^n \sigma_0(n)\ll n \log n$.  If $k\geq 4$, the resulting sum above is bounded above by  
\begin{align*}
 \log^{k-2} n \sum_{m=1}^{n}\sigma_{k-3}(n-m)\sigma_1(m)
    \ll_k n^{k-1} \log^{k-2} n.
\end{align*} 
The final bound above is obtained using \cite{halberstam} where it is shown that for any $a,b>0$\[\sum_{m=1}^n \sigma_a(n-m)\sigma_b(m) \ll_{a,b} \sigma_{a+b+1}(n) \ll_{a,b} n^{a+b+1}.\] 

In either case the number of representations having at least two equal $y_i$ terms is at most $n^{k-1}\log^{k-1}$. Thus the number of representations of $n$ as a sum of $k$ distinct products of terms with distinct heights $y_i$ is $\nu_k(n)\left(1+O\left(\frac{1}{\log n}\right)\right)$.

Finally, each representation in which the $y_i$ are strictly decreasing occurs exactly $k!$ different times in this count, once for each potential permutation of the indices, so we can conclude that

\[|D_n(\mu)| = \frac{1}{k!} \nu_k(n)\left(1+O\left(\frac{1}{\log n}\right)\right) = \frac{1}{k!(k-1)!\zeta(k)}\sigma_{k-1}(n)\log^k n\left(1+O\left(\frac{1}{\log n}\right)\right)\] for $k\geq 3$, and the result follows.
\end{proof}

Before closing this section we state and prove a useful corollary for the sequel.  

\begin{corollary}\label{cor:fewerparts}
    Suppose that $\mu$ is strict with $\mu_1>1$. Set $k = \mu_1-1$.   Then $$|\Av_n(\mu)\setminus D_n(\mu)| \ll \begin{cases} \sigma_0(n) & k=2 \\
    \sigma_{k-2}(n)\log^{k-1} n & k \geq 3 \end{cases}.$$ 
\end{corollary}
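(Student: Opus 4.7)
The plan is to control $\Av_n(\mu)\setminus D_n(\mu)$ by comparing it to an avoidance class of a smaller staircase, which is exactly what Theorem~\ref{thm:staircase} already bounds.

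First I would recall the counting of distinct magnitudes observation made earlier in the section: because $\mu$ is strict with $\mu_1 = k+1$, any partition that contains the staircase $(\mu_1, \mu_1-1,\ldots, 1)$ must contain $\mu$; contrapositively, every $\alpha \in \Av_n(\mu)$ has at most $\mu_1 - 1 = k$ distinct magnitudes, and by definition of $D_n(\mu)$ those with exactly $k$ distinct magnitudes lie in $D_n(\mu)$. Therefore every element of $\Av_n(\mu)\setminus D_n(\mu)$ has at most $k-1$ distinct magnitudes. Since containing the staircase $(k,k-1,\ldots,1)$ requires at least $k$ distinct magnitudes, this yields the key inclusion
$$\Av_n(\mu)\setminus D_n(\mu) \subseteq \Av_n\bigl((k,k-1,\ldots,1)\bigr).$$

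Next I would simply apply Theorem~\ref{thm:staircase} to the staircase $(k,k-1,\ldots,1)$, which corresponds to substituting $k-1$ for the parameter~$k$ in the statement of that theorem. For $k = 2$, Theorem~\ref{thm:staircase} gives $|\Av_n((2,1))| = \sigma_0(n)$, matching the first case. For $k = 3$, the same theorem produces the bound $|\Av_n((3,2,1))| \ll \sigma_1(n)\log^2 n = \sigma_{k-2}(n)\log^{k-1} n$, matching the $k\geq 3$ case. For $k \geq 4$, Theorem~\ref{thm:staircase} gives the bound $|\Av_n((k,k-1,\ldots,1))| \ll \sigma_{k-2}(n)\log^{k-1} n$ directly. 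Combining these with the set inclusion above yields the claimed asymptotic bound in each case.

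I do not expect any genuine obstacle here: the whole argument is essentially a one-line set inclusion followed by a case split that reads off Theorem~\ref{thm:staircase}. The only thing to be mildly careful about is the index shift between the parameter~$k$ used in the corollary (where $k = \mu_1 - 1$) and the parameter~$k$ used in Theorem~\ref{thm:staircase} (where the staircase has length $k+1$); aligning these makes the $k=2$ case land on the $\sigma_0$ regime and the $k\geq 3$ cases land on the $\sigma_{k-2}\log^{k-1}$ regime.
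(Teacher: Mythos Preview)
Your proposal is correct and matches the paper's own argument essentially verbatim: the paper's proof consists of the single observation that any partition with fewer than $k$ distinct magnitudes avoids the staircase $(k,k-1,\ldots,1)$, followed by an appeal to Theorem~\ref{thm:staircase}. Your careful handling of the index shift between the corollary's $k$ and the theorem's parameter is exactly what is needed to read off the bounds in each case.
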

\begin{proof}
This follows immediately from Theorem~\ref{thm:staircase} since any partition having fewer than $k$ distinct magnitudes avoids the pattern $(k,k-1,\ldots, 1)$.
\end{proof}

\subsection{Avoiding strict partitions $\mu$ that are not staircases}\label{ssec:not staircase}

We now consider avoiding strict partitions that are not staircases.  To do so, we introduce a method of constructing partitions avoiding a pattern $\mu$ from those avoiding a smaller pattern $\hat \mu$.

\begin{definition}
	Let $\mu=(\mu_1,\mu_2,\ldots)$ be a strict partition, and let $i>1$ be the least index such that  $\mu$ does not have a part of size $\mu_i+1$.  If such an index exists, we define 
	$$\hat\mu = (\mu_1-1,\mu_2-1, \ldots \mu_{i-1}-1, \mu_{i+1}, \mu_{i+2},\ldots),$$ 
	i.e., the result of deleting the $i$-th part of $\mu$, as well as decreasing each part above by one.  If no such index exists then $\hat \mu$ is not defined.  
\end{definition}

Graphically, $\hat \mu$ is obtained from $\mu$ by removing an ``L'' shaped region from the Ferrers board for $\mu$.  For example, if $\mu=(7,6,4,2)$, then $\hat\mu = (6,5,2)$ is obtained by removing the boxes in red below:

$$\ydiagram{7,6,4,2}*[*(red)]{0,0,4,0}*[*(red)]{3+1,3+1}\ .$$

Note that the only situation in which $\hat \mu$ is not defined is when $\mu$ is a staircase partition, which was treated in the previous section.   In the case when $\mu=(k+1,k,k-1,\ldots, \ell)$ contains a part of every size from $\ell>1$ to $k+1$, but no smaller parts, then $\hat \mu$ is obtained by removing the first column from $\mu$. 

The remainder of this section is divided into two subsections.  The first subsection deals with the special case when $\mu_1-\mu_2\geq 2$ and the next subsection deals with the general case.  The advantage of first considering this special case is that the constructions involved are simpler and motivate the constructions involved in the general argument.  Furthermore, the error term we obtain in the special case is stronger than its counterpart in the general case.  

\subsubsection{Avoiding partitions $\mu$ where $\mu_1-\mu_2\geq 2$}\label{sssec:nonstaircase 1}
We begin with a motivating example. Take $\mu = (6,3,2,1)$.  By Lemma~\ref{lem:rd of mu} the elements of $D(\mu)$ have the rectangular decomposition
\begin{equation}\label{eq:decomp6321}
	x_1y_1 +x_2y_2 + x_3y_3 + y_4 + y_5.
\end{equation}
Likewise, the partitions in $D(\hat\mu) = D((5,2,1))$  have the rectangular decomposition
 \begin{equation}\label{eq:decomp5321}
x_1y_1 +x_2y_2 + y_3 + y_4.
\end{equation}
We now give a construction to create partitions in  $D(\mu)$ from those in $D(\hat\mu)$. To this end take the rectangular decomposition for partitions in $D(\hat\mu)$ and consider the height of the rectangle corresponding to the part deleted from $\mu$ in the construction of $\hat\mu$.  In this example we deleted 3 from $\mu$ and so we consider $y_3$ in (\ref{eq:decomp5321}).   

Now for any $m>0$ we have, by the division algorithm, 
$$m = y_3d + r$$
for some $d\geq 0$ and  $y_3>r \geq 0$.  Taking a partition in $D(\hat\mu)$ with rectangular decomposition (\ref{eq:decomp5321}) we can now add to it $d$ columns of height $y_3$ and 1 column of height $r$.  Doing this transforms the third rectangle into a wide rectangle and creates a new thin rectangle either before or after the final rectangle with height $y_4$ depending on whether $r>y_4$ (before) or $r\leq y_4$ (after).  Of course, in the case when $r=0$ no new column is added. We thus obtain a partition of weight $n+m$ whose Ferrers board looks like one of the following, where the $m$ additional boxes are shaded gray:

\begin{center}
    \begin{tikzpicture}[scale=0.90]
    \draw (0,0) rectangle (1,3);
    \draw (1,0.5) rectangle (2,3);
    \draw (2,1) rectangle (2.3,3);
    \draw[fill = gray] (2.3,1) rectangle (3.3,3);
    \draw[fill = gray] (3.3,1.5) rectangle (3.6,3);
    \draw (3.6,2) rectangle (3.9,3);
    \end{tikzpicture}\qquad\raisebox{1.5cm}{or}\qquad
\begin{tikzpicture}[scale=0.90]
    \draw (0,0) rectangle (1,3);
    \draw (1,0.5) rectangle (2,3);
    \draw (2,1) rectangle (2.3,3);
    \draw[fill = gray] (2.3,1) rectangle (3.3,3);
    \draw (3.3,2) rectangle (3.6,3);
    \draw[fill = gray] (3.6,2.5) rectangle (3.9,3);
    \end{tikzpicture}
\end{center}
or, in the ``degenerate'' case when either $r=0$ or $r=y_4$ we can also have:
\begin{center}
    \begin{tikzpicture}[scale=0.90]
    \draw (0,0) rectangle (1,3);
    \draw (1,0.5) rectangle (2,3);
    \draw (2,1) rectangle (2.3,3);
    \draw[fill = gray] (2.3,1) rectangle (3.3,3);
    \draw (3.3,2) rectangle (3.6,3);
    \end{tikzpicture}\qquad\raisebox{1.5cm}{or}\qquad
\begin{tikzpicture}[scale=0.90]
    \draw (0,0) rectangle (1,3);
    \draw (1,0.5) rectangle (2,3);
    \draw (2,1) rectangle (2.3,3);
    \draw[fill = gray] (2.3,1) rectangle (3.3,3);
    \draw (3.3,2) rectangle (3.6,3);
    \draw[fill = gray] (3.6,2) rectangle (3.9,3);
    \end{tikzpicture}  \ .  
\end{center}
Note that the first two diagrams depict rectangular decompositions of the form in (\ref{eq:decomp6321}), while the degenerate cases result in partitions with only 4 distinct magnitudes.  We show that the contribution from these degenerate cases are asymptotically negligible.  

In light of this construction we make the following definition. Recall that $\P$ is the set of all partitions.  
 
\begin{definition}
Fix a strict partition $\mu$ for which $\hat\mu$ is defined.  So there exists a least index $i$ greater than 1 such that $\mu$ does not have a part of size $\mu_i+1$ and $\mu_i$ is the part removed from $\mu$ to obtain $\hat\mu$.  Now for each $m>0$ we define a function $\Psi_m:D(\hat\mu)\to \P$ as follows.

Fix $\alpha\in D(\hat\mu)$.  If $\mu_i>0$ then let $q$ be the height of the $\mu_i$-th rectangle in the rectangular decomposition for $\alpha\in D(\hat\mu)$. (By Lemma~\ref{lem:rd of mu} we know that $q>0$ and the corresponding rectangle is thin.) By the division algorithm write 
$$m = qd +r$$ 
for some $d\geq 0$ and $q > r \geq 0$. If $\mu_i=0$ then we take $r = m$ and $d=0$. Finally define $\Psi_m(\alpha)$ to be the partition obtained by adding $d$ columns of height $q$ along with a single column of height $r$ to $\alpha$.

Note that the location of the inserted column of height $r$ depends on the heights of the existing rectangles in $\alpha$ and may have the same height as one of these existing rectangles.
\end{definition}

\begin{remark}
We caution the reader that the function $\Psi_m$ depends on $\mu$.  As a result proper notation should reflect this fact, e.g., one could instead denote this function as $\Psi_{m,\mu}$.  To streamline notation though, we have chosen to denote this function as above since the $\mu$ in question will always be clear from context. 
\end{remark}

\begin{lemma}

\label{lem:muhat1}
Take $k\geq 3$ and suppose $\mu=(k+1, a_0, a_1,\ldots)$ is a strict partition where $k>a_0>0$.  Then
$$\sum_{m=1}^{n} |D_{m}(\hat\mu)| = (k-a_0)|D_n(\mu)| + 
    O\left(\sigma_{k-2}(n)\log^{k-1} n  \right).$$
\end{lemma}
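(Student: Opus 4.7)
The plan is to enumerate in two ways the pairs $(\alpha, m)$ with $\alpha \in D(\hat\mu)$, $m \geq 1$, and $|\alpha| + m = n$, using the map $\Psi_m$ as the link. Counting pairs directly gives $\sum_{w=0}^{n-1} |D_w(\hat\mu)|$, which equals $\sum_{m=1}^n |D_m(\hat\mu)| - |D_n(\hat\mu)|$ since $D_0(\hat\mu) = \emptyset$. It will thus suffice to show that the number of pairs equals $(k - a_0)|D_n(\mu)|$ up to an error of $O(\sigma_{k-2}(n) \log^{k-1} n)$, and to bound $|D_n(\hat\mu)|$ by the same quantity.

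First I will classify the image of $\Psi$. Fix $\alpha \in D(\hat\mu)$ with rectangular decomposition $x_1 y_1 + \cdots + x_{k-1}y_{k-1}$, so that its $a_0$-th rectangle is thin with height $q = y_{a_0}$. Writing $m = qd + r$ with $0 \leq r < q$, the image $\Psi_m(\alpha)$ widens the $a_0$-th rectangle by $d$ columns and, when $r > 0$, inserts a new column of height $r$. I will call the pair \emph{non-degenerate} when $r > 0$ and $r \notin \{y_{a_0+1}, \ldots, y_{k-1}\}$; in this case $\Psi_m(\alpha)$ has exactly $k$ distinct magnitudes and its rectangular decomposition satisfies the thin/wide pattern characterizing $D(\mu)$ by Lemma~\ref{lem:rd of mu}, so $\Psi_m(\alpha) \in D_n(\mu)$. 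Otherwise the pair is \emph{degenerate} and $\Psi_m(\alpha)$ has only $k - 1$ distinct magnitudes.

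Next I would verify the multiplicity claim: every $\beta \in D_n(\mu)$ is the $\Psi$-image of exactly $k - a_0$ non-degenerate pairs. Since $\mu$ has no parts strictly between $a_0$ and $k + 1$, every position $j \in \{a_0+1, \ldots, k\}$ is thin in $\beta$'s decomposition. For each such $j$, I would recover a unique pair $(\alpha, m)$ by letting $\alpha$ be $\beta$ with its $j$-th rectangle removed and its $a_0$-th rectangle reduced to width $1$, and setting $m = y'_{a_0}(x'_{a_0} - 1) + y'_j$; Lemma~\ref{lem:rd of mu} guarantees this $\alpha$ lies in $D(\hat\mu)$, and $\Psi_m(\alpha) = \beta$ follows by direct check. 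Different values of $j$ yield different pairs, and every non-degenerate pair arises in this way.

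Finally I would bound the two error terms. Each degenerate image has at most $k - 1$ distinct magnitudes and thus avoids the staircase $(k, k-1, \ldots, 1)$; by Theorem~\ref{thm:staircase} (using $k \geq 3$), the number of such partitions of $n$ is $O(\sigma_{k-2}(n) \log^{k-1} n)$. Each such image is the target of at most $k$ degenerate pairs (one from $r = 0$, and one for each choice of $j > a_0$ with $r = y_j$), so the total count of degenerate pairs satisfies the same bound. Similarly $|D_n(\hat\mu)| \leq |\Av_n((k, k-1, \ldots, 1))|$, since any partition containing the staircase $(k, k-1, \ldots, 1)$ also contains $\hat\mu$; so this term admits the same bound. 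Combining everything gives the lemma. The main obstacle will be the multiplicity computation, i.e., verifying carefully via Lemma~\ref{lem:rd of mu} that the reconstruction from $\beta \in D_n(\mu)$ produces exactly $k - a_0$ valid pairs, no more and no fewer.
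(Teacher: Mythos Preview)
Your proposal is correct and follows essentially the same route as the paper: apply $\Psi_m$ to each $\alpha\in D_{n-m}(\hat\mu)$, separate the non-degenerate images landing in $D_n(\mu)$ (each hit exactly $k-a_0$ times, via the thin positions $a_0+1,\ldots,k$ of $\beta$) from the degenerate ones with only $k-1$ distinct magnitudes, and bound the latter by Theorem~\ref{thm:staircase}. Your extra care in isolating the endpoint term $|D_n(\hat\mu)|$ is a refinement the paper glosses over, but it is harmless since that term is likewise bounded by $|\Av_n((k,k-1,\ldots,1))|$.
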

\begin{proof}
In this case $\hat\mu = (k, a_1,\ldots).$  For $1 \leq m <n$ we know, by Lemma~\ref{lem:rd of mu},  that any $\alpha \in D_{n-m}(\hat \mu)$ has the rectangular decomposition
$$ n-m = \cdots + x_{a_1}y_{a_1} + y_{a_1+1} + \cdots +y_{a_0}+ \cdots + y_{k-1},$$
with $k>a_0>0$ and so $y_{a_0}>0$. We now show that for each such $\alpha$ we have $\Psi_{m}(\alpha) \in D_n(\mu) \cup \Av_n\big((k,k-1,\ldots, 1)\big)$. To this end write $m= y_{a_0}d+r$ where $d\geq 0$ and $y_{a_0}>r\geq 0$.  We consider two cases depending on the value of $r$. 

\medskip\noindent
\textbf{Case 1: $r \in \{y_{a_0+1}, \ldots, y_{k-1}\}\cup \{0\}$}
\medskip

In this case $\Psi_m(\alpha)$ has exactly $k-1$ distinct magnitudes and hence $\Psi_m(\alpha)\in \Av_n(k,k-1,\ldots, 1)$.  Moreover, for each $\beta \in \Av_n(k,k-1,\ldots, 1)$ there can be at most $k$ partitions $\alpha \in \bigcup_{m=1}^{n}D_{n-m}(\mu)$ with $\Psi_{n-|\alpha|}(\alpha) =\beta$.  

\medskip\noindent
\textbf{Case 2: $r \notin \{y_{a_0+1}, \ldots, y_{k-1}\}\cup \{0\}$}
\medskip

Let $i\geq a_0$ be such that $y_i> r >y_{i+1}$ where we set $y_k:= 0$. In this case $\Psi_m(\alpha)$ has the rectangular decomposition
$$ n =  \cdots +(d+1)y_{a_0}+ y_{a_0+1}+ \cdots +y_i +r+ y_{i+1}+\cdots + y_{k-1},$$
where we have only displayed the pertinent terms.  By Lemma~\ref{lem:rd of mu} we see that $\Psi_m(\alpha) \in D(\mu)$.

We also show that for each $\beta\in D_n(\mu)$, there are $(k-a_0)$ partitions $\alpha \in \bigcup_{m=1}^{n}D_{n-m}(\hat \mu)$ with $\Psi_{n-|\alpha|}(\alpha) = \beta$.  In particular such a  $\beta$ has the rectangular decomposition

$$ n =  \cdots +x_{a_0}y_{a_0}+y_{a_0+1}+ \cdots  + y_{k}. $$

Set $m = (x_{a_0}-1)y_{a_0} + r$ where $r \in\{y_{a_0+1},\ldots, y_{k}\}$ and define $\alpha$ to be the partition obtained by deleting $x_{a_0}-1$ columns of height $y_{a_0}$ and the column of height $r$ from $\beta$.  It now follows from our definitions that $\Psi_{m}(\alpha) = \beta$ and thus there exists exactly $(k -a_0)$ partitions $\alpha\in D(\hat\mu)$ with $\Psi_{n-|\alpha|}(\alpha) = \beta$, one for each choice of $r$.

\medskip

Now, applying the map $\Psi_m$ to the partitions in $D_{n-m}(\hat \mu)$ and counting the partitions so created we have
$$\sum_{m=1}^{n} |D_m(\hat\mu)| = (k-a_0)|D_n(\mu)|+ O\left(k\cdot \left|\Av_n\big((k,k-1,\ldots, 1)\big) \right|\right).$$
As $k\geq 3$, we see by Theorem \ref{thm:staircase} that $k\cdot \left|\Av_n \big((k,k-1,\ldots, 1)\big) \right| \ll \sigma_{k-2}(n)\log^{k-1} n$
 which completes our proof. 
\end{proof}

\noindent We are now able to count  partitions avoiding a strict partition $\mu$ whose first two parts differ by at least 2.

\begin{theorem} \label{thm:nonotch}
Consider a strict partition $\mu=(k+1,a_0,a_1,\ldots)$ with $k>a_0$.  Then \begin{equation}
    |\Av_n(\mu)| = \frac{n^{k-1}}{(k-1)!\prod_{j=0}^{k-1}\left(k-(a_j+j)\right)}+E(n). \label{eq:nonotch}
\end{equation}
where the error term, $E(n)$ satisfies
\begin{equation}
    E(n) = \begin{cases}
    0 & k=1 \\
    O(1) & k=2 \\
    O(\sigma_{k-2}(n)\log^{k-1} n) & k\geq 3.
    \end{cases} \label{eq:errorterms}
\end{equation}

\end{theorem}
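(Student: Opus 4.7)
The plan is to induct on $k$, where $\mu = (k+1, a_0, a_1, \ldots)$ with $k > a_0$. The base cases $k=1$ and $k=2$ follow directly from Theorem~\ref{thm:sporadic}: for $k=1$ we have $\mu=(2)$ and the single-factor product equals $1$, matching $|\Av_n((2))|=1$; for $k=2$ we check the two sub-cases $\mu=(3)$ and $\mu=(3,1)$ against the formula (both give $E(n)=O(1)$).

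For the inductive step with $k \geq 3$, I would first observe that since $\mu_1-\mu_2 = (k+1)-a_0 \geq 2$, the partition $\hat\mu = (k, a_1, a_2, \ldots)$ is well-defined and satisfies the hypothesis of the theorem with parameter $\hat k = k-1$ (noting that $a_1 < a_0 < k$ implies $a_1 < k-1$, unless $a_1=0$ in which case it still works). Applying Corollary~\ref{cor:fewerparts} and Lemma~\ref{lem:muhat1}, I would write
$$|\Av_n(\mu)| = |D_n(\mu)| + O\bigl(\sigma_{k-2}(n)\log^{k-1} n\bigr) = \frac{1}{k-a_0}\sum_{m=1}^n |D_m(\hat\mu)| + O\bigl(\sigma_{k-2}(n)\log^{k-1} n\bigr).$$
Next, again by Corollary~\ref{cor:fewerparts}, I would replace $|D_m(\hat\mu)|$ by $|\Av_m(\hat\mu)|$ at the cost of a term whose sum over $m$ is absorbed by the claimed error, and then apply the induction hypothesis to $\hat\mu$, which yields
$$|\Av_m(\hat\mu)| = \frac{m^{k-2}}{(k-2)!\prod_{j=0}^{k-2}\bigl((k-1)-(a_{j+1}+j)\bigr)} + \hat E(m).$$

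Reindexing via $j \mapsto j+1$ gives $\prod_{j=0}^{k-2}((k-1)-(a_{j+1}+j)) = \prod_{j=1}^{k-1}(k-(a_j+j))$. Summing the main term over $m$ using $\sum_{m=1}^n m^{k-2} = \frac{n^{k-1}}{k-1} + O(n^{k-2})$ and combining the factor $(k-a_0) = k-(a_0+0)$ out front with the shifted product from the hypothesis yields the full product $\prod_{j=0}^{k-1}(k-(a_j+j))$ and the claimed leading term $\frac{n^{k-1}}{(k-1)!\prod_{j=0}^{k-1}(k-(a_j+j))}$. The error terms to control are: the $O(n^{k-2})$ from the partial-sum approximation, the accumulated $\sum_{m=1}^n \hat E(m)$ from the inductive step (which, using \eqref{eq:sigmalogsum}, is $O(n^{k-2}\log^{k-2}n)$ for $k \geq 4$ and $O(n\log n)$ for $k=3$), and the Lemma~\ref{lem:muhat1}/Corollary~\ref{cor:fewerparts} error of $O(\sigma_{k-2}(n)\log^{k-1}n)$. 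Since $\sigma_{k-2}(n) \geq n^{k-2}$, the last of these dominates and the total error fits within the claimed bound.

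The main obstacle will be the bookkeeping of the product and the error aggregation: one has to verify that the index shift from $\hat\mu$ being a ``decremented'' partition produces exactly the right missing factor $(k-a_0)$ when combined with the prefactor from Lemma~\ref{lem:muhat1}, and that no error from the summation (including the $\sigma_{k-3}(m)\log^{k-2}m$ terms for $k \geq 4$) leaks out past $O(\sigma_{k-2}(n)\log^{k-1}n)$. The sporadic boundary $k=3$, where $\hat k = 2$ gives $\hat E(m)=O(1)$ and summing yields $O(n)$, needs to be checked separately but poses no serious difficulty since $\sigma_1(n)\log^2 n \gg n$.
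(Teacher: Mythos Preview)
Your approach is essentially the paper's: reduce from $\mu$ to $\hat\mu$ via Lemma~\ref{lem:muhat1}, pass between $D_m$ and $\Av_m$ using Corollary~\ref{cor:fewerparts}, sum the main term, and check that all error contributions are dominated by $O(\sigma_{k-2}(n)\log^{k-1}n)$. The reindexing and error bookkeeping you sketch are correct.

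There is, however, a gap in how your induction is organized. You induct on $k$, so for each $k\geq 3$ you must establish the result for \emph{every} admissible $\mu$ with first part $k+1$, in particular for $\mu=(k+1)$ itself (the case $a_0=0$). But Lemma~\ref{lem:muhat1} is stated only for $a_0>0$, so your inductive step does not cover the single-part partitions; as written the induction does not close. The paper avoids this by inducting on the number of nonzero parts of $\mu$ rather than on $k$: its base case is precisely $\mu=(k+1)$ for all $k$, handled by invoking Nathanson's asymptotic for partitions into parts at most $k$. You can repair your argument either by inserting that same appeal to Nathanson for the case $a_0=0$, or by citing the more general Lemma~\ref{lem:decomp a>0} (take $\ell=0$ there; the set $E_n(\mu)$ is empty and the conclusion is exactly Lemma~\ref{lem:muhat1} with $a_0\geq 0$ allowed). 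With the second fix your induction on $k$ actually becomes self-contained and re-derives the Sylvester--Nathanson asymptotic along the way, which is a mild advantage over the paper's arrangement.
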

\begin{proof}
First note that when $k=1$ or 2, the only possibilities are $\mu=(2), (3)$ and $(3,1)$.  These cases were all treated in Theorem \ref{thm:sporadic}, and a quick check shows that they agree with the formula above with the error terms as in \eqref{eq:errorterms}.  For the remainder of the proof we can assume $k\geq 3.$

We now prove this by induction on the number of nonzero parts in $\mu$.
The base case, when $\mu$ has one part (meaning $a_0=0$ and $\mu=(k+1)$) occurs when the set $\Av_n(\mu)$ is precisely the set of partitions of $n$ into parts of size at most $k$.   It is well known since at least Sylvester that the count of such partitions is asymptotic to $\frac{n^{k-1}}{k!(k-1)!}$ (see for example \cite{Ramirez}).  Nathanson \cite{Nathanson} reproves this result with a power saving error term, which implies that 
\begin{equation*}
|\Av_n\big((k+1)\big)| = \frac{n^{k-1}}{k!(k-1)!} + O\left(n^{k-2}\right), 
\end{equation*} 
which is equation \eqref{eq:nonotch} in the case when $a_i=0$ for all $i$ (with an even stronger error term).

Now suppose $\mu=(k+1,a_0,\ldots)$ satisfies the hypotheses of the theorem, that $k>a_0>0$ and assume the result holds when avoiding any such partition with fewer parts than $\mu$.  By Corollary~\ref{cor:fewerparts} the number of partitions having fewer than $k$ distinct magnitudes is $O\big(\sigma_{k-2}(n)\log^{k-1}n \big)$, the size of our error term.  Thus we can restrict ourselves to counting $D_n(\mu)$.

Note that $\hat \mu$ satisfies the hypotheses of the theorem, and has one fewer part than $\mu$, so by induction
\begin{align*}
    |\Av_{m}(\hat\mu)| &= \frac{m^{k-2}}{(k-2)!\prod_{j=1}^{k-1}\left(k-a_j-j\right)} + \begin{cases}
      O(1) & k=3 \\
      O\left(\sigma_{k-3}(m)\log^{k-2}m\right)&k\geq 4
    \end{cases}
\end{align*}
where we start our indexing at $j=1$ in the product to account for the deletion of the part $a_0$ from $\mu$.  Further we know that $|D_m(\hat\mu)| = |\Av_m(\hat\mu)| + O\left(D_m(\hat\mu) \setminus \Av_m(\hat\mu)\right)$
where by  Corollary \ref{cor:fewerparts} the error term is $O(\sigma_0(m))$ when $k=3$ and $O(\sigma_{k-3}(m)\log^{k-2}m)$ when $k\geq 4$.  Putting this together we have
\begin{align}
    |D_{m}(\hat\mu)| &= \frac{m^{k-2}}{(k-2)!\prod_{j=1}^{k-1}\left(k-a_j-j\right)} + \begin{cases}
O\left(\sigma_{0}(m)\right)&k=3 \\
O\left(\sigma_{k-3}(m)\log^{k-2}m\right)&k\geq 4. \label{eqn:muhatcount}
    \end{cases}
\end{align}  
As $k\geq 3$ and $a_0>0$ we also know from Lemma~\ref{lem:muhat1} that
\begin{equation}\label{eqn:mapsum}
    \sum_{m=1}^{n} |D_{m}(\hat\mu)| = (k-a_0)|D_n(\mu)| + 
    O\left(\sigma_{k-2}(n)\log^{k-1} n  \right).
\end{equation}

Inserting \eqref{eqn:muhatcount} into \eqref{eqn:mapsum} and solving for $|D_n(\mu)|$ we have
\begin{align*}
    |D_n(\mu)| &= \frac{1}{k{-}a_0} \sum_{m=1}^n |D_m(\hat\mu)|  + 
    O(\sigma_{k-2}(n)\log^{k-1} n) \\
    &= \frac{1}{k{-}a_0} \sum_{m=1}^n \frac{m^{k-2}}{(k{-}2)!\prod_{j=1}^{k-1}\left(k{-}a_j{-}j\right)}+\begin{cases}
    O\left(\sum\limits_{m=1}^n \sigma_0(m)\right) +O(\sigma_1(n)\log^2 n) & k=3 \\
    O\left(\sum\limits_{m=1}^n \sigma_{k-3}(m)\log^{k-2} m\right){+}O(\sigma_{k-2}(n)\log^{k-1} n) & k\geq 4
    \end{cases} \\
    &=\frac{1}{k{-}a_0}\left(\frac{n^{k-1}}{(k{-}1)!\prod_{j=1}^{k-1}\left(k-a_j-j\right)} +O(n^{k-2})\right) + \begin{cases}
    O(\sigma_1(n)\log^2 n) & k=3 \\
    O\left(n^{k-2}\log^{k-1} n\right) & k\geq 4
    \end{cases} \\
    &=\frac{n^{k-1}}{(k-1)!\prod_{j=0}^{k-1}\left(k-a_j-j\right)}+E(n).
\end{align*}
The error terms above were obtained using the facts in \eqref{eq:sigma0}, \eqref{eq:sigma1}, and \eqref{eq:sigmak}. \end{proof}

\subsubsection{The general case}\label{sssec:general}

Here we adapt the ideas of the previous section to handle the case when $\mu$ is any strict partition that is not the staircase. As discussed at the start of Subsection~\ref{ssec:not staircase} the results obtained in the previous subsection are a special case of the result in this subsection; however, the error term obtained here is weaker than what was obtained in the previous section.  We follow a similar strategy to that of the previous subsection. Starting with the same function $\Psi_m(\mu) : D(\hat \mu) \to \mathcal{P}$ we prove a more general form of Lemma \ref{lem:muhat1}.

\begin{lemma}\label{lem:decomp a>0} Take $k\geq 3$ and suppose $\mu$ is strict but not the staircase so that we can write  
$$\mu = (k+1,k,\ldots, k-\ell+1, a_0, a_1,\ldots),$$
where $k-\ell> a_0 \geq 0$, so $k-\ell$ is the largest part size less than $\mu_1=k+1$ omitted from $\mu$, and $a_0,a_1 \ldots$ are the parts of $\mu$ of size less than $k-\ell$. For each $i$ with $ k - \ell < i \leq k$ denote by 
$$\mu^{(i)} = (k+1, \ldots, \widehat i,\ldots, k-\ell, a_0, a_1,\ldots),$$ 
the partition obtained from $\mu$ by removing the part of size $i$ and adding a part of size $k-\ell$, and let ${E_n(\mu) = \bigcup_{i = k - \ell+1}^k D_n(\mu^{(i)})}$. Then $$\sum_{m=1}^{n} |D_{m}(\hat\mu)| = (k-\ell-a_0)|D_n(\mu)| + 
    O\left(\sigma_{k-2}(n)\log^{k-1} n +\left|E_n(\mu)\right| \right).$$
\end{lemma}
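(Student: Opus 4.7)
The plan is to mirror the proof of Lemma~\ref{lem:muhat1}, using the same map $\Psi_m : D(\hat\mu) \to \P$. I would first record that $\hat\mu = (k, k-1, \ldots, k-\ell, a_1, a_2, \ldots)$, so by Lemma~\ref{lem:rd of mu} each $\alpha \in D(\hat\mu)$ has a rectangular decomposition $x_1 y_1 + \cdots + x_{k-1} y_{k-1}$ with $x_i = 1$ whenever $i$ is not a part of $\hat\mu$; in particular $x_{a_0} = 1$. Writing $m = y_{a_0} d + r$ with $0 \leq r < y_{a_0}$ (and $d = 0, r = m$ when $a_0 = 0$), $\Psi_m(\alpha)$ widens the $a_0$-th rectangle by $d$ columns and inserts a single column of height $r$ at its sorted position $j^*$.

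I would then dispatch the degenerate cases $r \in \{0, y_1, \ldots, y_{k-1}\}$: these produce $\Psi_m(\alpha)$ with only $k-1$ distinct magnitudes, hence in $\Av_n((k, k-1, \ldots, 1))$, and since each such image has at most $k$ pre-images under $\Psi$, Corollary~\ref{cor:fewerparts} bounds their total contribution by $O(\sigma_{k-2}(n) \log^{k-1} n)$. When $r$ instead creates a new distinct magnitude, one automatically has $j^* > a_0$. The key geometric step is to track how the old wide indices of $\alpha$ (contained in $\{k-\ell, \ldots, k-1\} \cup \{a_1, a_2, \ldots\}$) shift upward past position $j^*$. This analysis shows that the new wide set of $\Psi_m(\alpha)$ is contained in the parts of $\mu$ when $j^* \in \{a_0+1, \ldots, k-\ell\}$, yielding $\Psi_m(\alpha) \in D_n(\mu)$ by Lemma~\ref{lem:rd of mu}; and is contained in the parts of $\mu^{(j^*)}$ when $j^* \in \{k-\ell+1, \ldots, k\}$, yielding $\Psi_m(\alpha) \in D_n(\mu^{(j^*)}) \subseteq E_n(\mu)$.

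Next I would count pre-images. For each $\beta \in D_n(\mu)$, the $k-\ell-a_0$ choices $j^* \in \{a_0+1, \ldots, k-\ell\}$ always produce valid pre-images: each such position is forced to be thin in $\beta$, and a direct check via Lemma~\ref{lem:rd of mu} confirms the recovered $\alpha$ lies in $D(\hat\mu)$. These ``standard'' pre-images give the main term $(k-\ell-a_0)|D_n(\mu)|$. Additional pre-images arise only when some $j^* \in \{k-\ell+1, \ldots, k\}$ happens to satisfy $x_{j^*}^\beta = 1$; but that extra width-$1$ condition, combined with the thinness already imposed on $\beta$, places $\beta$ in $D_n(\mu^{(j^*)})$ via Lemma~\ref{lem:rd of mu}, so the overcount is at most $\sum_{j=k-\ell+1}^{k} |D_n(\mu^{(j)})| \ll |E_n(\mu)|$. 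Likewise each $\gamma \in E_n(\mu) \setminus D_n(\mu)$ contributes only $O(1)$ pre-images, for another $O(|E_n(\mu)|)$. Summing everything gives the claimed identity.

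The main obstacle will be the case analysis of how the insertion at $j^*$ reshuffles the rectangular decomposition once $\hat\mu$ has an extended top staircase (so multiple wide indices shift together), and the resulting subtle distinction between $\Psi_m(\alpha) \in D_n(\mu)$ and $\Psi_m(\alpha) \in D_n(\mu^{(i)})$ depending on exactly where $j^*$ sits. The inequality $j^* > a_0$, which is automatic from $r < y_{a_0}$, localizes all disturbances to the top-staircase region $\{k-\ell, \ldots, k\}$ and leaves the $\{a_1, a_2, \ldots\}$ part of the wide set unchanged; this is what keeps the bookkeeping feasible.
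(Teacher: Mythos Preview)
Your proposal is correct and follows essentially the same approach as the paper. Both arguments apply $\Psi_m$ to $D(\hat\mu)$, split according to where the new column of height $r$ lands (your $j^*$ is the paper's $i+1$), identify the three outcomes $\Av_n((k,k-1,\ldots,1))$, $E_n(\mu)$, $D_n(\mu)$ via Lemma~\ref{lem:rd of mu}, and count preimages; the only cosmetic difference is that you organize the preimage count by target $\beta$ whereas the paper organizes it by the value of $r$.
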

\begin{proof}
In this case $\hat\mu = (k,k-1,\ldots, k-\ell, a_1,\ldots).$  For any $1 \leq m \leq n$ we know, by Lemma~\ref{lem:rd of mu},  that any $\alpha \in D_{n-m}(\hat \mu)$ as the	rectangular decomposition $$ n-m = \cdots + x_{a_1}y_{a_1} + y_{a_1+1} + \cdots +y_{a_0}+ \cdots + y_{k-\ell-1}  + x_{k-\ell}y_{k-\ell}+\cdots + x_{k-1}y_{k-1}.$$
We now show that for each such $\alpha$ we have $\Psi_{m}(\alpha) \in D_n(\mu) \cup E_n(\mu) \cup \Av_n(k,k-1,\ldots, 1)$. 

To this end recall the definition of $\Psi_m$ and set 
$$m = dy_{a_0}+r$$
where $0\leq r < y_{a_0}$.  (Recall, if $a_0=0$ then we take $d=0$ and $r=m$.) So $\Psi_m(\alpha)$ is obtained by adding $d$ columns of height $y_{a_0}$ and a single column of height $r$.  We now consider several cases depending on the value of $r$.   For completeness in Cases 2 and 3 we set $y_k:= 0$ and $y_0:=\infty$.

\medskip\noindent
\textbf{Case 1: } $r \in \{y_1, \ldots, y_{k-1}\}\cup \{0\}$
\medskip

In this case $\Psi_m(\alpha)$ has exactly $k-1$ distinct magnitudes and hence $\Psi_m(\alpha)\in \Av_n(k,k-1,\ldots 1)$.  Furthermore, for any $\beta \in \Av_n(k,k-1,\ldots 1)$ there can be at most $k$ partitions $\alpha \in D(\mu)$ with $\Psi_{n-|\alpha|}(\alpha) =\beta$.  This follows since, in particular, there is at most one per distinct height present in $\beta$.

\medskip\noindent
\textbf{Case 2:} $y_i> r >y_{i+1}$ with $k-\ell \leq i$
\medskip

In this case  $\Psi_m(\alpha)$ has rectangular decomposition
$$\cdots +(d+1)y_{a_0}+ \cdots +x_{k-\ell}y_{k-\ell} + \cdots + x_{i}y_{i}+r+x_{i+1}y_{i+1}+\cdots +x_{k-1}y_{k-1}$$
where we have only displayed the pertinent changes to the rectangular decomposition for $\alpha$.  (Note that the decomposition has $k$ rectangles, since the thin rectangle of height $r$ is a new height not present before in the $(i+1)$-st position.)  It then follows by Lemma~\ref{lem:rd of mu} that $\Psi_m(\alpha)\in D(\mu^{(i+1)})\subset E(\mu)$.  Furthermore we see, in this case, that for any $\beta \in D(\mu^{(i+1)})$ with rectangular decomposition 
$$ \cdots + x_{a_0}y_{a_0} +  \cdots + x_{k-\ell}y_{k-\ell} + \cdots +x_{i}y_{i} + y_{i+1} + x_{i+2}y_{i+2} + \cdots + x_ky_k$$  
there is a unique choice of $m=(x_{a_0}-1)y_{a_0}+ y_{i+1}$ and partition $\alpha\in D(\mu)$, obtained by removing $(x_{a_0}-1)$ columns of height $y_{a_0}$ and the single column of height $y_{i+1}$, such that $\Psi_m(\alpha) = \beta$. 

\medskip\noindent
\textbf{Case 3:} $y_i> r >y_{i+1}$ with $i< k-\ell$
\medskip

In this case $\Psi_m(\alpha)$ has the rectangular decomposition
$$ \cdots +(d+1)y_{a_0}+ \cdots +y_i +r+ y_{i+1}+\cdots + y_{k-\ell-1}+ x_{k-\ell}y_{k-\ell} + \cdots + x_{k-1}y_{k-1}$$
where again we only display the pertinent terms (and the term for $y_{a_0}$ is to be ommited when $a_0=0$).  By Lemma~\ref{lem:rd of mu} we see that $\Psi_m(\alpha) \in D(\mu)$.

Lastly, for each $\beta\in D(\mu)$, there are $(k-\ell -a_0)$ partitions $\alpha \in D(\hat \mu)$ with $\Psi_{|\beta|-|\alpha|}(\alpha) = \beta$.  To see this observe that such a  $\beta$ has the rectangular decomposition 
$$\cdots +x_{a_0}y_{a_0}+y_{a_0+1}+ \cdots  + y_{k-\ell-1} + x_{k-\ell+1}y_{k-\ell+1} + \cdots+ x_{k-1}y_{k-1}$$
and we can choose $\alpha$ to be the partition obtained by deleting $x_{a_0}-1$ columns of height $y_{a_0}$ and a single column of height  $r \in\{y_{a_0+1},\ldots, y_{k-\ell}\}$.  

\medskip

Now, applying the map $\Psi_{n-m}$ to the partitions in $D_m(\hat \mu)$ and counting the partitions so created we have
$$\sum_{m=1}^{n} |D_m(\hat\mu)| = (k-\ell-a_0)|D_n(\mu)| + |E_n(\mu)| + O\left(k\cdot \left|\Av_n\big((k,k-1,\ldots 1)\big) \right|\right).$$
By Theorem \ref{thm:staircase} we know that $k\cdot \left|\Av_n \big((k,k-1,\ldots 1)\big) \right| \ll \sigma_{k-2}(n)\log^{k-1} n$
 which completes our proof. 
\end{proof}



\begin{theorem}\label{thm:not staircase}
Suppose $\mu$ is a strict partition that is not a staircase so that 
$$\mu = (k+1,k, k-1,\ldots, k-\ell+1, a_0,a_1,\ldots) $$
where $k-\ell > a_0\geq 0$.  Then 
$$|\Av_n(\mu)| 
= \frac{n^{k-1}\log^\ell n}{\ell!(k-1)!
\prod_{j=0}^{k-\ell-1}\left(k-\ell -a_j-j\right)}
\left(1+O\left(\frac{1}{\log n}\right)\right).$$
\end{theorem}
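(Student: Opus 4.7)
The plan is to prove the theorem by induction on $|\mu|$, with base cases $\ell=0$ (handled by Theorem~\ref{thm:nonotch}) and $k\leq 2$ (handled by Theorem~\ref{thm:sporadic}, since in that range the only strict non-staircase partitions are $(2)$, $(3)$, $(3,1)$, and $(3,2)$). For the inductive step I assume $\ell\geq 1$ and $k\geq 3$; both $\hat\mu$ from Lemma~\ref{lem:decomp a>0} and each $\mu^{(i)}$ appearing in $E_n(\mu)$ have strictly smaller weight than $\mu$, so the induction is well-founded.

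The strategy mirrors the proof of Theorem~\ref{thm:nonotch}. By Corollary~\ref{cor:fewerparts} I first replace $|\Av_n(\mu)|$ by $|D_n(\mu)|$ at a cost of $O(\sigma_{k-2}(n)\log^{k-1} n)$, which is negligible against the main term. Lemma~\ref{lem:decomp a>0} then supplies the recursion
$$(k-\ell-a_0)\,|D_n(\mu)| \;=\; \sum_{m=1}^n |D_m(\hat\mu)| \;+\; O\bigl(\sigma_{k-2}(n)\log^{k-1} n + |E_n(\mu)|\bigr),$$
where $\hat\mu=(k,k-1,\ldots,k-\ell,a_1,a_2,\ldots)$ has $\hat k=k-1$, $\hat\ell=\ell$, and $\hat a_j=a_{j+1}$ in the theorem's notation, and still satisfies $\hat k-\hat\ell>\hat a_0\geq 0$. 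The one exceptional case is when $\hat\mu$ is the full staircase $(k,k-1,\ldots,1)$, which happens precisely when $\ell=k-1$ and $a_0=0$; there I invoke Theorem~\ref{thm:staircase} in place of the inductive hypothesis.

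Either way the asymptotic for $|\Av_m(\hat\mu)|$ reads
$$|\Av_m(\hat\mu)| \;=\; \frac{m^{k-2}\log^\ell m}{\ell!\,(k-2)!\,\prod_{j=1}^{k-\ell-1}(k-\ell-a_j-j)}\left(1+O\left(\tfrac{1}{\log m}\right)\right);$$
in the staircase edge case this is extracted from \eqref{eq:stairasymp} using that $\sigma_{k-2}(m)$ equals $\zeta(k-1)m^{k-2}$ on average. Summing over $m$ via \eqref{eq:logsum} (or \eqref{eq:sigmalogsum} in the staircase case) contributes
$$\frac{n^{k-1}\log^\ell n}{\ell!\,(k-1)!\,\prod_{j=1}^{k-\ell-1}(k-\ell-a_j-j)}\left(1+O\left(\tfrac{1}{\log n}\right)\right),$$
and dividing by $(k-\ell-a_0)$ inserts precisely the missing $j=0$ factor into the product, giving the claimed asymptotic.

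It remains to verify that every error term is at most $O(n^{k-1}\log^{\ell-1} n)$, which secures the stated $O(1/\log n)$ relative error. The contribution $\sigma_{k-2}(n)\log^{k-1} n\ll n^{k-2}\log^{k-1} n$ is dominated with room to spare. For $|E_n(\mu)|=\sum_{i=k-\ell+1}^k|D_n(\mu^{(i)})|$, each $\mu^{(i)}$ has $k'=k$ and staircase length $\ell'=k-i\leq\ell-1$, so the inductive hypothesis (or Theorem~\ref{thm:nonotch} when $\ell'=0$) gives $|D_n(\mu^{(i)})|\ll n^{k-1}\log^{\ell-1} n$, maximised at $i=k-\ell+1$. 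The principal subtlety I anticipate is the constant bookkeeping in the staircase edge case for $\hat\mu$: one must check that the factor of $\zeta(k-1)$ from Theorem~\ref{thm:staircase} cancels exactly with the leading coefficient from \eqref{eq:sigmalogsum} to produce the same constant as in the generic inductive case.
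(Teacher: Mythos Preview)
Your outline matches the paper's proof almost exactly: induction on $|\mu|$, reduction to $D_n(\mu)$ via Corollary~\ref{cor:fewerparts}, the recursion from Lemma~\ref{lem:decomp a>0}, bounding $|E_n(\mu)|$ by the inductive hypothesis applied to each $\mu^{(i)}$ (each of which has top-run length $\ell'=k-i\leq \ell-1$), and summing the asymptotic for $|D_m(\hat\mu)|$ via \eqref{eq:logsum} or \eqref{eq:sigmalogsum}. Your identification of the parameters $\hat k=k-1$, $\hat\ell=\ell$, $\hat a_j=a_{j+1}$, and your treatment of the $\zeta(k-1)$ cancellation in the staircase case $k\geq 4$, are all correct.

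There is, however, one genuine gap, and it is not the one you flagged. When $k=3$ and $\ell=2$ (so $\mu=(4,3,2)$), the staircase $\hat\mu=(3,2,1)$ falls into the $k'=2$ case of Theorem~\ref{thm:staircase}, where the relative error is only $O(\log\log m/\log m)$, not $O(1/\log m)$. If you sum $\frac{1}{2\zeta(2)}\sigma_1(m)\log^2 m\bigl(1+O(\log\log m/\log m)\bigr)$ over $m\leq n$, the error contributes $\sum_m \sigma_1(m)\log m\,\log\log m \asymp n^2\log n\,\log\log n$, which exceeds the target $O(n^{k-1}\log^{\ell-1}n)=O(n^2\log n)$ by a factor of $\log\log n$. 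This would yield only an $O(\log\log n/\log n)$ relative error for $|\Av_n((4,3,2))|$, falling short of the theorem's claim. The paper handles this separately (its Case~3): instead of the crude form \eqref{eq:stairasymp}, it uses the sharper expansion \eqref{eq:nk2-precise} with its explicit second-order term $-\tfrac{2}{\zeta(2)}m\log m\,\sigma_{\shortminus 1}'(m)$, and then verifies directly that $\sum_{m\leq n} m\log m\,\sigma_{\shortminus 1}'(m)\ll n^2\log n$ by unfolding $\sigma_{\shortminus 1}'(m)=\sum_{d\mid m}(\log d)/d$ and swapping the order of summation. With that correction in place your argument goes through; the ``constant bookkeeping'' you anticipated is routine by comparison.
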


\begin{proof}
When $k=1$ the only possibility for $\mu$ is (2).  As in Theorem \ref{thm:sporadic} we find that $\left|\Av\big((2)\big)\right|=1$, which trivially matches the expression above.  When $k=2$, the possibilities for $\mu$ are $(3)$, (3,1) and (3,2).  Again, these were counted in Theorem \ref{thm:sporadic}, and the results again fit the statement of the theorem.  Thus for the remainder of the proof we assume $k\geq 3$.

We now proceed by induction on $|\mu|$.  From above we know the result holds for all partitions with $k< 3$.  Now take $\mu$ as in the statement of the theorem and assume the result holds for all partitions with weight $<|\mu|$.  We may further assume that $k\geq 3$. We know that $$\hat\mu =(k,k-1,k-2,\ldots, k-\ell, a_1, a_{2},\ldots).$$
As $|D_m(\hat\mu)| = |\Av_m(\hat\mu)| + O\left(|\Av_m(\hat\mu)\setminus D_m(\hat\mu)|\right)$ it follows by Corollary \ref{cor:fewerparts} that 
\begin{equation} |D_m(\hat\mu)|  = 
|\Av_m(\hat\mu)| + 
\begin{cases}
O(\sigma_0(m)) &\text{ if } k = 3\\
O(\sigma_{k-3}(m)\log^{k-2}m) &\text{ if } k \geq 4.
\end{cases} \label{eq:muhaterror}
\end{equation}

We now wish to find the size of $\sum_{m=1}^{n} |D_m(\hat\mu)|$, but to do so we must consider three cases.

\medskip\noindent
\textbf{Case 1:} $k-l\geq 2$
\medskip

In this case we know that $\hat\mu$ is not a staircase since $k-\ell \geq 2$ and $a_1 \leq a_0 < k-\ell -1$ and so $\hat \mu$ is missing a part of size $k-\ell -1>0$.  Furthermore the weight of $\hat\mu$ is clearly smaller than $\mu$ so we may apply our induction hypothesis, along with \eqref{eq:muhaterror} to obtain
\begin{align}
|D_m(\hat\mu)| &=\frac{m^{k-2}\log^\ell m}{\ell!(k-2)!
\prod_{j=0}^{k-1-\ell -1} ((k-1)-\ell - a_{j+1} - j)}+O\left(m^{k-2}\log^{\ell-1}m\right) \nonumber \\
& =\frac{m^{k-2}\log^\ell m}{\ell!(k-2)!
\prod_{j=1}^{k-\ell-1}\left(k-\ell-a_{j}-j\right)}+O\left(m^{k-2}\log^{\ell-1}m\right).
\end{align}
Summing this expression over all $m$ from 1 to $n$ and making use of \eqref{eq:logsum} we find
\begin{align}
	\sum_{m=1}^{n} |D_m(\hat\mu)| 
	&= \sum_{m=1}^{n}\left( \frac{m^{k-2}\log^\ell m}{\ell!(k-2)!
\prod_{j=1}^{k-\ell-1}(k-\ell-a_j-j)}\right)
+ O\left(\sum\limits_{m=1}^n m^{k-2}\log^{\ell-1} m\right)  \nonumber \\
	&=\frac{n^{k-1}\log^\ell n}{\ell!(k-1)!
\prod_{j=1}^{k-\ell-1}\left(k-\ell-a_{j}-j\right)} + O\left(n^{k-1}\log^{\ell-1} n\right). \label{eq:case1sum}
\end{align}

\medskip\noindent
\textbf{Case 2:} $k-\ell = 1$, $k\geq 4$
\medskip

Here $\mu = (k+1,\ldots, 2)$  and so $\hat \mu = (k,k-1,\ldots, 1)$ is a staircase. This case proceeds similarly to the former case, but instead of induction we must use Theorem \ref{thm:staircase} to find the size of $D_m(\hat \mu)$.  For $k\geq 4$ we have
\begin{align*}
    |D_{m}(\hat\mu)| &= 
\frac{\sigma_{k-2}(m)\log^{k-1} m}{(k-1)!(k-2)! \zeta(k-1)}\left(1 + O\left(\frac{1}{\log m}\right)\right).
\end{align*}
Summing now this expression using \eqref{eq:sigmalogsum} we have
\begin{align}
	\sum_{m=1}^{n} |D_m(\hat\mu)| 
	&= \sum_{m=1}^{n} \frac{\sigma_{k-2}(m)\log^{k-1} m}{(k-1)!(k-2)! \zeta(k-1)}\left(1 + O\left(\frac{1}{\log m}\right)\right) \nonumber \\
	&=\frac{n^{k-1}\log^{k-1} n}{(k-1)!(k-1)!} + O\left(n^{k-1}\log^{k-2} n\right) \nonumber \\
	&=\frac{n^{k-1}\log^\ell n}{\ell!(k-1)!
\prod_{j=1}^{k-\ell-1}\left(k-\ell-a_{j}-j\right)} + O\left(n^{k-1}\log^{\ell-1} n\right) \label{eq:case2sum}
\end{align}
where the product written in the denominator of the final term above is empty, but included so as to be written in the same form as \eqref{eq:case1sum}.

\medskip
\noindent\textbf{Case 3:} $k-\ell = 1$, $k=3$
\medskip

As in the previous case, $\hat \mu$ is a staircase, however since $\hat \mu=(3,2,1)$, we need to work a little harder to get the same error term. In particular we use the more precise expression given in \eqref{eq:nk2-precise}
\begin{align}
    \left|D_m\big((3,2,1)\big)\right| =\frac{1}{2\zeta(2)}\sigma_{1}(m)\log^2 m-\frac{2}{\zeta(2)} m \log m \sigma_{\hspace{-0.7mm}\shortminus 1} '(m) + O\left(\sigma_1(m) \log m\right). \label{eq:a321again}
\end{align}
Summing the main and error terms of \eqref{eq:a321again} over $m$ from $1$ to $n{-}1$ gives the same result as in \eqref{eq:case2sum}, by the same argument, so we treat only the sum of the second term, $\frac{2}{\zeta(2)} m \log m \sigma_{\hspace{-0.7mm}\shortminus 1} '(m) = \frac{2}{\zeta(2)} m \log m \sum\limits_{d|m} \frac{\log d}{d}$. 

\begin{align*}
    \sum_{m=1}^{n} \left(\frac{2}{\zeta(2)} m \log m \sum_{d|m} \frac{\log d}{d}\right) &= \frac{2}{\zeta(2)}\sum_{d<n}\left( \frac{\log d}{d} \sum_{c=1}^{\left\lfloor\frac{n}{d} \right \rfloor} cd \log(cd)\right)\\
    &\ll \sum_{d<n}\left(\log d \cdot \left(\frac{n}{d}\right)^2\cdot  \log \left(\frac{n}{d}\cdot d\right)   \right)\\
    &\ll n^2 \log n \int_1^n \frac{\log t}{t^2}\ dt\\
    &\ll n^2 \log n.
\end{align*}
Thus we find that the contribution from this term can be absorbed into the error term in \eqref{eq:case2sum}, and we obtain the same result for $k=3$ as well.
\medskip

Since we obtained the same result \eqref{eq:case1sum} and \eqref{eq:case2sum} in all three cases, we now proceed using that expression for the sum of $|D_m(\hat \mu)|$. Note that any partition of the form
$$\mu^{(i)} = (k+1, \ldots, \widehat i,\ldots, k-\ell, a_0, a_1,\ldots),$$ 
where $k-\ell < i \leq k$ and $\widehat{\phantom{x}}$ denotes deletion, has weight smaller than $\mu$.  By induction we have $$\left|\Av_n\left(\mu^{(i)}\right)\right| \ll n^{k-1}\log^{k-i}n.$$  Therefore we may apply Lemma~\ref{lem:decomp a>0} to obtain
\begin{align*} 
\sum_{m=1}^n |D_{m}(\hat\mu)| &= (k-\ell-a_0)|D_n(\mu)| + \sum_{i=k-\ell+1}^k |D_n(\mu^{(i)})| + O\left(\sigma_{k-2}(n)\log^{k-1} n \right) \\
&= (k-\ell-a_0)|D_n(\mu)| + 
    O\left(n^{k-1}\log^{\ell-1} n\right). 
\end{align*}
Solving for $|D_n(\mu)|$ above and combining that with the sum obtained in either \eqref{eq:case1sum} or \eqref{eq:case2sum} we obtain
\begin{align*}
    |D_n(\mu)| &= 
\frac{1}{k-\ell-a_0} \sum_{m=1}^n |D_m(\hat\mu)|  + 
    O\left(n^{k-1}\log^{\ell-1} n\right) \\
&=\frac{n^{k-1}\log^{\ell}n}{\ell!(k-1)!
    \prod_{j=0}^{k-\ell-1}\left(k-\ell -a_j-j\right)}
+ O\left(n^{k-1}\log^{\ell-1} n\right)\\
&=\frac{n^{k-1}\log^{\ell}n}{\ell!(k-1)!
    \prod_{j=0}^{k-\ell-1}\left(k-\ell -a_j-j\right)}
    \left(1+ O\left(\frac{1}{\log n}\right)\right).
\end{align*}
This completes the proof.  
\end{proof}

\begin{corollary}\label{cor:not rational}
    If $\mu$ is a strict partition with $\mu_1-\mu_2 =1$ and $\mu_2>0$ then the generating function for $\mu$ is not algebraic.  
\end{corollary}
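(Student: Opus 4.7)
The strategy is to combine the asymptotics of Theorems~\ref{thm:staircase} and \ref{thm:not staircase} with the classical fact that coefficients of an algebraic power series cannot exhibit $\log n$ factors in their leading asymptotic. Concretely, if $f(z) = \sum a_n z^n$ is algebraic over $\mathbb{Q}(z)$, then by the Newton--Puiseux theorem it has no logarithmic terms in its local expansions at singularities, and hence by the Flajolet--Odlyzko transfer theorems (Theorem VII.8 of Flajolet--Sedgewick, \emph{Analytic Combinatorics}) the coefficients admit an asymptotic expansion $a_n \sim \sum_j c_j\,\omega_j^{-n}\,n^{\alpha_j}$ with each $\alpha_j \in \mathbb{Q}$ and no factor of $\log n$.

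I split into two cases. Suppose first that $\mu$ is not a staircase, and write $\mu = (k+1, k, \ldots, k-\ell+1, a_0, \ldots)$ with $k - \ell > a_0 \geq 0$; the hypotheses $\mu_1 - \mu_2 = 1$ and $\mu_2 > 0$ force $\ell \geq 1$. Theorem~\ref{thm:not staircase} then gives $|\Av_n(\mu)| \sim C\,n^{k-1}(\log n)^\ell$ for an explicit $C > 0$, so the coefficients carry a genuine $(\log n)^\ell$ factor with $\ell \geq 1$, ruling out algebraicity immediately.

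If instead $\mu = (k+1, k, \ldots, 1)$ is a staircase with $k \geq 1$, then $|\Av_n(\mu)|$ itself involves the oscillatory arithmetic function $\sigma_{k-1}(n)$, so one cannot read off a clean polynomial-times-$\log$ leading term from $a_n$ pointwise. I would therefore pass to the Ces\`aro sum: algebraicity of $f(z) = \sum_n |\Av_n(\mu)| z^n$ forces algebraicity of $g(z) = f(z)/(1-z)$, whose coefficients are the partial sums $S_n = \sum_{m \leq n} |\Av_m(\mu)|$. Applying Theorem~\ref{thm:staircase} together with identity \eqref{eq:sigma0} when $k = 1$ and \eqref{eq:sigmalogsum} when $k \geq 2$, one computes $S_n \sim D_k\, n^k (\log n)^k$ for an explicit $D_k > 0$. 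The resulting $(\log n)^k$ factor in the coefficients of the algebraic function $g$ is the contradiction, and the corollary follows.

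The main obstacle is invoking a sufficiently precise algebraic-asymptotics theorem; once that is in hand the rest is a direct application of the asymptotics already established in the paper, with the only subtlety being the Ces\`aro-sum detour needed to smooth out the divisor-function oscillation in the staircase case.
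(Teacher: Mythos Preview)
Your argument is correct and takes a genuinely different route from the paper's. The paper invokes Fatou's theorem (an integer-coefficient power series convergent in the unit disk is either rational or transcendental) to reduce to excluding rationality, then uses the explicit form $\sum_i P_i(n)\lambda_i^n$ of rational-function coefficients together with the fact (also from Fatou) that all poles sit at roots of unity; along an arithmetic progression $n\equiv 0\pmod m$ the dominant contribution is then polynomial in $n$, which is incompatible with a $\log$ factor. The staircase cases are handled individually there: $(2,1)$ by citing the known natural boundary of $\sum\sigma_0(n)z^n$, and $(3,2,1)$ via the two-sided bound $n\log^2 n \ll |\Av_n(\mu)| \ll n\log^2 n\log\log n$. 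You instead appeal directly to Newton--Puiseux plus Flajolet--Odlyzko transfer to rule out logarithmic factors for any algebraic series, and for staircases you smooth the $\sigma_{k-1}(n)$ oscillation by passing to $f(z)/(1-z)$, which remains algebraic and whose partial-sum coefficients acquire a clean $n^k(\log n)^k$ leading term via \eqref{eq:sigma0} and \eqref{eq:sigmalogsum}. The paper's route is more elementary---Fatou and the rational-coefficient structure rather than full singularity analysis---but requires ad hoc treatment of the small staircases; your Ces\`aro detour gives a uniform argument for all staircases at the cost of invoking heavier analytic-combinatorics machinery.
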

\begin{proof} 
When $\mu$ is either not a staircase (with $\mu_1-\mu_2=1$) or is a staircase $\mu=(k+1,k,\ldots, 1)$ with $k \geq 3$ we know from Theorems~\ref{thm:not staircase} and \ref{thm:staircase} respectively that \begin{equation}|\Av_n(\mu)|\asymp n^{k-1}\log^\ell n \label{eq:approxassymp} \end{equation} for some $\ell>0$. The essence of the proof is that it is not possible for the coefficients of rational generating functions to have such logarithmic factors combined with \begin{quote}
\vspace{-\baselineskip}\begin{theorem}[Fatou's Theorem \cite{fatou}]  If $G(z) = \sum_{i=1}^\infty a_n z^n \in \mathbb{Z}[[z]] $ converges inside the
unit disk, then either $G(z) \in \mathbb{Q}(z)$ or $G(z)$ is transcendental over $Q(z)$. Moreover, if $G(z)$ is rational, then each pole is located at a root of unity.

\end{theorem}
\end{quote}

Suppose that the generating function $F_\mu(x)$ of such a sequence were rational, $F_\mu(x) = \frac{p(x)}{q(x)}$.  By Theorem 4.1.1 of \cite{EC1} we have \begin{equation}|\Av_n(\mu)| = \sum_i P_i(n) \lambda_i^n\ \label{eq:stanley} \end{equation} where $\left(\frac{1}{\lambda_i}\right)$ are the roots of $q(x)$, and the $P_i(n)$ are polynomials.  Since $\lim\limits_{n \to \infty} |\Av_n(\mu)|^{1/n} = 1$, we have that the largest value of $|\lambda_i|$ (respectively the norm of the smallest root of $q(x)$) is 1 and $F_\mu(x)$ has radius of convergence 1, so Fatou's theorem applies.

If there were a unique such root of norm 1, we would be done, as the dominant terms of \eqref{eq:approxassymp} and \eqref{eq:stanley} do not agree.  Otherwise, if $q(x)$ has multiple distinct roots, $\lambda_1,\lambda_2, \ldots, \lambda_j$ of norm 1, Fatou's theorem tells us these roots must all be roots of unity.  Let $m$ be such that $\lambda_i^m=1$ for all $1\leq i \leq j$.  Then for multiples of $m$ we have that the dominant term of \eqref{eq:stanley} is $\sum_{i=1}^j P_i(nm) \asymp_m n^a$ for some integer $a$, whereas the dominant term of \eqref{eq:approxassymp}  is $(nm)^{k-1}\log^\ell (nm) \asymp_m n^{k-1}\log n$.  Thus the generating function cannot be rational, and hence by Fatou's theorem not algebraic.  

When $\mu=(3,2,1)$ Theorem~\ref{thm:staircase} implies that $n\log^2 n \ll |\Av_n(\mu)| \ll n\log^2 n \log \log n$, and this also cannot be rational (or algebraic) by the same argument.  Finally when $\mu=(2,1)$ and $|\Av_n(\mu)|=\sigma_0(n)$, whose generating function is well known to have a natural boundary on the unit circle, and thus is not algebraic.

\end{proof}

\pagebreak

\section*{Table of small partitions}
\begin{table}[!h]
    \centering
    \begin{tabular}{|c|c|c|c|c|}
    \hline
        $k$ & $\mu$ & $F_\mu(z)$ & $|\Av_n(\mu)|$ & OEIS \\
        \hline
        0 & (1) & 0 & 0 & - \\
        1 & (2) & $\frac{1}{1-z}$ & 1 & \href{http://oeis.org/A000012}{A000012}\\
          & (2,1) & - - & $\sigma_0(n)$ & \href{http://oeis.org/A000005}{A000005}\\
        2 & (3) & $\frac{1}{(1-z)(1-z^2)}$& $\left\lfloor\frac{n}{2}\right\rfloor+1$  & \href{http://oeis.org/A004526}{A004526}\\
          & (3,1) & $\frac{1}{(1-z)^2} $ & $n$ & \href{http://oeis.org/A000027}{A000027}\\
          & (3,2) & - -  & $n \log n +(2\gamma{-}2)n +O\left(n^{\frac{131}{416}}\right)$ &  \href{http://oeis.org/A320226}{A320226}\\
          & (3,2,1) & - -  &  $\frac{\sigma_{1}(n)\log^2 n}{2\zeta(2)}  +\frac{2n\sigma_{\hspace{-0.3mm}\shortminus 1}'(n)\log n }{\zeta(2)} + O(\sigma_1(n)\log n)$ & \href{http://oeis.org/A265250}{A265250} \\
        3  & (4) & $\frac{1}{(1-z)(1-z^2)(1-z^3)}$ & $\left[\frac{n^2+6n+9}{12}\right]$ & \href{http://oeis.org/A001399}{A001399}\\
          &  (4,1) & $\frac{z(z^2 - z - 1)}{(z - 1)^3(z + 1)^2}$& $\frac{2n^2 + 10n + 3 + (-1)^n (2n - 3)}{16}$&\href{http://oeis.org/A117142}{A117142} \\
          &  (4,2) & $\frac{1-z+z^3}{(1-z)^2(1-z^2)}$ & $\left\lceil\frac{n^2+3}{4}\right\rceil = \frac{n^2}{4} + \frac{7 + (-1)^n}{8}$ & \href{http://oeis.org/A033638}{A033638}\\
          &  (4,2,1) & - - & $\frac{n^2}{2} {-} n\log n{+}\left(\frac{3}{2}{-}2\gamma\right)n {+}O\left(n^{\frac{131}{416}}\right)$ & \href{http://oeis.org/A309097}{A309097}\\
          &  (4,3) & - -& $\frac{n^2}{4} \log n-\left(\frac{9}{8} -\frac{\gamma}{2}\right)n^2 +O\left(n^{\frac{3}{2}}\right)$ &\href{http://oeis.org/A309098}{A309098}\\
          &  (4,3,1) & - - & $\frac{n^3 \log n}{2} +O(n^3)$& \href{http://oeis.org/A309099}{A309099}\\
          &  (4,3,2) & - -& $\frac{n^3 \log ^2 n}{4} + O(n^3 \log n)$&  \href{http://oeis.org/A309194}{A309194}\\
          &  (4,3,2,1) & - - & $\frac{\sigma_2(n)\log^3 n}{6\zeta(3)} + O\left(n^2\log^2 n\right)$ & \href{http://oeis.org/A309058}{A309058} \\ 
          4 & (5) & $\frac{1}{(1-z)(1-z^2)(1-z^3)(1-z^4)}$& $\left[\frac{n^3 + 15n^2 +\left(\frac{135 +9(-1)^n}{2}\right)n  + 94 + 18(-1)^n}{144}\right]$ & \href{http://oeis.org/A001400/}{A001400}\\
           & (5,1) & $\frac{z(z^5-z^4-z^3+z+1)}{(z-1)^4(z+1)(z^2+z+1)^2}$ & $ \frac{\left(2 \, n^{2} - {\left(10 \, n + 21\right)} \left \lfloor \frac{n + 2}{3} \right \rfloor + 14 \, \left \lfloor \frac{n + 2}{3} \right \rfloor^{2} + 10 \, n + 14\right)}{8} \left \lfloor \frac{n + 2}{3} \right \rfloor +O(1)$ & \href{http://oeis.org/A117143}{A117143}\\
          & (5,2) & $\frac{-z(z^7 - 2z^5 + z^3 + z^2 - z - 1)}{(z - 1)^4(z + 1)^2(z^2 + z + 1)}$ & $\frac{n^3 + 12n^2 + \left(\frac{15+ 9(-1)^n}{2}\right)n}{72} + O(1)$ & \href{http://oeis.org/A136185}{A136185} \\

          \hline
    \end{tabular}
    \caption{Table of avoidance statitistics for small, strict, partitions $\mu$.}
    \label{tab:my_label}
\end{table}

\renewcommand{\biblistfont}{\normalfont\normalsize}
\bibliographystyle{amsplain}
\bibliography{mybib}

\end{document}